\setlist[enumerate]{label=\rm{(\arabic*)}}
\setlist[enumerate,2]{label=\rm({\it\roman*})}
\setlist[itemize]{label=\raisebox{0.25ex}{\tiny$\bullet$}}
\theoremstyle{plain}    
 \newtheorem{thm}{Theorem}[section]
 \numberwithin{equation}{section} 
 \numberwithin{figure}{section} 
 \theoremstyle{plain}
 \theoremstyle{plain}    
 \theoremstyle{plain}    
 \newtheorem{prop}[thm]{Proposition} 
 \theoremstyle{plain}    
 \newtheorem{lem}[thm]{Lemma} 
 \theoremstyle{remark}
 \newtheorem{rmk}[thm]{Remark}
 \theoremstyle{definition}
 \theoremstyle{plain}    
 \newtheorem{conj}[thm]{Conjecture} 
\theoremstyle{definition}
\newtheorem{defi}[thm]{Definition}
\newcommand{\C}{{\mathbb{C}}}
\newcommand{\N}{{\mathbb{N}}}
\newcommand{\R}{{\mathbb{R}}}
\newcommand{\e}{\varepsilon}
\newcommand{\f}{\varphi}
\DeclareMathOperator{\tr}{tr}
\newcommand{\mycolor}{Navy}
\title[REGULARIZING PROPERTIES OF COMPLEX MONGE-AMP\`ERE FLOWS II]{ Regularizing properties of Complex Monge-Amp\`ere flows II: Hermitian manifolds}
\author[T. D. T\^O]{Tat Dat T\^O}
\date{\today}
\address{ Institut Math\'ematiques de Toulouse \\
Universit{\'e} Paul Sabatier\\ 31062 Toulouse cedex 09\\ France.}
\email{tat-dat.to@math.univ-toulouse.fr}
\begin{document}
\begin{abstract}
We prove that a general complex Monge-Amp\`ere flow  on a Hermitian manifold can be run from an arbitrary initial condition  with zero Lelong number at all points. Using this property, we confirm  a conjecture of Tosatti-Weinkove: the Chern-Ricci flow performs a {\sl canonical surgical contraction}. Finally, we study a generalization of the Chern-Ricci flow on compact Hermitian manifolds, namely the twisted Chern-Ricci flow. 
\end{abstract}
\maketitle

\section*{Introduction}
 Let $(X,g,J)$ be a compact Hermitian  manifold of  complex dimension $n$, that is a compact complex manifold such that $J$ is compatible with the Riemannian metric $g$. 
Recently a number of geometric flows have been introduced to study the structure of Hermitian manifolds. Some flows which do preserve the Hermitian property have been proposed by Streets-Tian \cite{StT10,StT11,StT13}, Liu-Yang \cite{LY12} and also anomaly flows due to Phong-Picard-Zhang \cite{PPZ15,PPZ16a,PPZ16b} which moreover preserve the conformally balanced condition of Hermitian metrics. Another such flow, namely the Chern-Ricci flow, was introduced by Gill \cite{Gil11} and has been further  developed by Tosatti-Weinkove in \cite{TW15}. The Chern-Ricci flow is written as 
\begin{equation}\label{CRF}
\frac{\partial}{\partial t}\omega=-Ric(\omega),\quad \omega|_{t=0}=\omega_0,
\end{equation}
where $Ric(\omega)$ is the {\sl Chern-Ricci form} which is defined locally by
$$Ric(\omega):=-dd^c\log\omega^n:=-\frac{\sqrt{-1}}{\pi}\partial\bar{\partial}\log\omega^n.$$
This flow specializes the K\"ahler-Ricci flow when the initial metric is K\"ahler.  In \cite{TW15,TW13} Tosatti and Weinkove have investigated the flow on arbitrary Hermitian manifolds, notably in complex dimention 2 (see  also \cite{TW15b,FTW,GS15,Gil15,LV13,Zhe15,Yang16} for more recent works on the Chern-Ricci flow).  

\medskip 
For the K\"ahler case,  running the K\"ahler-Ricci flow (or complex Monge-Amp\`ere flows) from a rough initial data has been studied by several recent works \cite{CD07}, \cite{ST}, \cite{SzTo}, \cite{GZ13}, \cite{BG13}, \cite{DiNL14}. In \cite{ST}, \cite{SzTo} the authors succeeded to run certain complex Monge-Amp\`ere flows from continuous initial data, while \cite{DiNL14} and \cite{GZ13} are running a simplified flow starting from an initial current with zero Lelong numbers. Recently, we extended these latter works to deal with general complex Monge-Amp\`ere flows and arbitrary initial condition (cf. \cite{To16}). One of the motivations for this problem comes from the Analytic Minimal Model Program proposed by Song-Tian \cite{ST}.  For the Chern-Ricci flow, the same question was asked recently by Tosatti-Weinkove \cite{TW13,TW15} related to the classification of non-K\"ahler complex surfaces. 

\medskip
\medskip
Assume that there exists a holomorphic map between compact Hermitian manifolds $\pi:X\rightarrow Y$  blowing down an exceptional divisor $E$ on $X$ to one point $y_0\in Y$. In addition, assume that there exists a smooth function $\rho$ on $X$ such that
\begin{equation}\label{condition of cohomology class}
\omega_0-TRic(\omega_0)+dd^c\rho=\pi^*\omega_Y,
\end{equation}
with $T<+\infty$. Tosatti and Weinkove proved: 

\medskip
\noindent {\bf Theorem.}(\cite{TW15,TW13}){\it The solution $\omega_t$ to the Chern-Ricci flow (\ref{CRF}) converges in $C_{loc}^\infty(X\setminus E)$ to a smooth Hermitian metric $\omega_T$ on $X\setminus E$.}

\medskip
{\it Moreover, there exists a distance function $d_T$ on $Y$ such that $(Y,d_T)$ is a compact metric space and $(X,g(t))$ converges in the Gromov-Hausdorff sense $(Y,d_T)$ as $t\rightarrow T^-$.
} 
 
 \medskip
 \medskip
Observe that $\omega_T$ induces a singular metric $\omega'$ on $Y$ which is smooth in $Y\setminus\{y_0\}$. Tosatti and Weinkove conjectured that one can continue the Chern-Ricci flow on $Y$ with initial data $\omega'$. This is an open  question in \cite[Page 2120]{TW13} in which they conjectured that the Chern-Ricci flow performs a {\sl canonical surgical contraction}:
 
 \medskip
\noindent {\bf Conjecture.} (Tosatti-Weinkove \cite[Page 2120]{TW13}){\it 
\begin{enumerate}
\item There exists a smooth maximal solution $\omega_t$ of the Chern-Ricci on $Y$ for $t\in (T,T_Y)$ with $ T<T_Y\leq+\infty$ such that $\omega_t$ converges to $\omega'$, as $t\rightarrow T^+$, in $C^{\infty}_{loc}(Y\setminus\{y_0\})$. Furthermore, $\omega_t$ is uniquely determined by $\omega_0$. 

\medskip
\item The metric space $(Y,g(t))$ converges to $(Y,d_T)$ as $t\rightarrow T^+$ in the Gromov-Hausdorff sense. 
\end{enumerate}
}
 \medskip 
\medskip
In this note, we confirm this conjecture\footnote{After this paper was completed, the author learned that Xiaolan Nie proved the first statement of the conjecture for complex surfaces (cf. \cite{Nie2}). She also proved that the Chern-Ricci flow can be run from a bounded data. The author would like to thank Xiaolan Nie for sending her preprint.} .
An essential ingredient of its proof is to prove that the Monge-Amp\`ere flow corresponding to the Chern-Ricci flow can be run from a rough data.  By generalizing a result of Sz\'ekelyhidi-Tosatti \cite{SzTo}, Nie \cite{Nie} has  proved this property for compact Hermitian manifolds of vanishing first Bott-Chern class and continous initial data. In this paper,  we generalize the previous results of Nie \cite{Nie} and the author  \cite{To16} by  considering the following complex Monge-Amp\`ere flow:
\begin{equation*}
(CMAF) \hskip1cm 
\dfrac{\partial \f_t}{\partial t}= \log\dfrac{(\theta_t+dd^c\f_t)^n}{\Omega}-F(t,x,\f_t),
\end{equation*} 
where $(\theta_t)_{t\in [0,T]}$ is a family of Hermitian forms with $\theta_0=\omega$ and $F$ is a smooth function on $\R\times X\times \R$.

\medskip
\medskip
\noindent\textbf{Theorem A.}
\textit{Let $\f_0$ be a $\omega$-psh function  with zero Lelong number at all points. Let $(t,z,s)\mapsto F(t,z,s)$ be a smooth function on $[0,T]\times X\times \R$ such that $\partial F/\partial s\geq 0$ and $\partial F/\partial t$ is bounded from below.}

\medskip
{\it Then there exists a family of smooth strictly $\theta_t-psh$ functions $(\varphi_t)$ satisfying $(CMAF)$
in $(0, T]\times X,$ with $\varphi_t\rightarrow \varphi_0$ in $L^1(X),$ as $t\searrow 0^+$ and $\f_t$ converges to $\f_0$ in $C^0(X)$ if $\f_0$ is continuous. This family is moreover unique if $\partial F/\partial t$ is bounded and $\partial F/\partial s\geq 0$.}

\medskip
The following stability result is a straighforward extension of \cite[Theorem 4.3, 4.4]{To16}.

\medskip
\medskip
\noindent \textbf{Theorem B.} \textit{Let $\varphi_0,\varphi_{0,j}$ be $\omega$-psh functions with zero Lelong number at all points, such that $\f_{0,j}\rightarrow \f_0$ in $L^1(X)$. Denote by $\varphi_{t,j}$ and $\f_t$ the corresponding solutions of $(CMAF)$ with initial condition $\f_{0,j}$ and $\f_0$ respectively. Then for each $\e\in(0,T)$
$$\f_{t,j}\rightarrow \f_{t}\  \text{ in }\  C^\infty ([\e, T]\times X)\ \text{ as }\  j\rightarrow +\infty.$$
\quad Moreover, if $\f_0$ and $\psi_0$ are continuous, then for any $k\geq 0$, for any $0<\e<T$, there exists a positive constant $C(k,\e)$ depending only on $k$ and $\e$ such that
\begin{equation*}
||\f-\psi||_{C^{k}([\e,T]\times X)}\leq C(k,\e)||\f_0-\psi_0||_{L^\infty(X,\omega)
}.
\end{equation*}}

As a consequence of Theorem A and Theorem B, the Chern-Ricci flow on any Hermitian manifold can be run from rough data.  Using this result and a method due to Song-Tosatti-Weinkove \cite{SW13,TW13} we prove the conjecture. The proof  is given in Section \ref{proof of conjecture}.

\medskip
The second purpose of this paper is to study a generalization of the Chern-Ricci flow, namely the {\sl twisted Chern-Ricci flow}: 
 \begin{equation*}
 \frac{\partial\omega_t }{\partial t}= -Ric(\omega_t)+\eta, \quad \omega|_{t=0}=\omega_0
 \end{equation*}
 where $Ric(\omega_t)$ is the Chern-Ricci form of $\omega_t$,  $\omega_0$ is a Hermitian metric on $X$ and $\eta$ is a smooth $(1,1)$-form. In general, we do not assume $\eta$ is closed. This flow also generalizes the twisted K\"ahler-Ricci flow which has been studied recently  by several authors (see for instance \cite{CSz12,GZ13}).

\medskip 
We show that the twisted Chern-Ricci flow starting from a Hermitian metric $\omega_0$  is equivalent to the following complex Monge-Amp\`ere flow 
\begin{equation}
\frac{\partial \f}{\partial t}=\log\frac{(\hat{\omega}_t+dd^c \f)^n}{\omega_0^n},
\end{equation}
where $\hat{\omega}_t=\omega_0+t(\eta-Ric(\omega_0))$. We first prove the following, generalizing \cite[Theorem 1.2]{TW15}:

\medskip
\medskip
\noindent {\bf Theorem C.} {\it There exists a unique solution to the twisted Chern-Ricci flow on $[0,T)$, where 
$$T:=\sup\{t\geq 0| \exists \psi\in C^\infty(X) \text{ with } \hat{\omega}_t+dd^c\psi>0\}.$$ }

\medskip
When the twisted Chern-Ricci flow has a long time solution, it is natural to study its behavior at infinity. When the Bott-Chern class vanishes and $\eta=0$,  Gill  has proved that the flow converges to a Chern-Ricci flat Hermitian metric (cf. \cite{Gil11}). 

\medskip
Denote by
\begin{equation*}
\{\eta\}:=\{ \alpha \text{ is  a real (1,1)-form }| \exists f\in C^\infty(X) \text{ with }   \alpha= \eta+dd^c f \},
\end{equation*} 
the  equivalence class of $\eta$. Suppose that $c^{BC}_1(X)-\{\eta\}$ is negative.  Consider the {\sl normalized twisted Chern-Ricci flow}
\begin{equation}\label{NTCRF}
\frac{\partial\omega_t }{\partial t}= -Ric(\omega_t)-\omega_t+\eta.
\end{equation}
Then we have the following result for the long time behavior of the flow generalizing \cite[Theorem 1.7]{TW15}:

\medskip
\medskip
\noindent\textbf{Theorem D.} {\it  Suppose $c^{BC}_1(X)-\{\eta\}< 0$.  The normalized twisted  Chern-Ricci flow  smoothly converges to a Hermitian metric $\omega_{\infty}=\eta-Ric(\Omega)+dd^c \f_{\infty}$ which satisfies
  $$Ric(\omega_\infty)=\eta-\omega_\infty.$$}

\medskip
Observe that $\omega_\infty$ satisfies the {\sl twisted Einstein equation}:
\begin{equation}\label{teq}
Ric(\omega)=\eta-\omega.
\end{equation}
We can prove the existence of a unique solution of (\ref{teq}) using a result of Monge-Amp\`ere equation due to Cherrier \cite{Cher87} (see Theorem \ref{twisted Einstein metric}).
Theorem D moreover gives  an alternative proof of the existence of the twisted Einstein metric $\omega_\infty$ in $-c^{BC}_1(X)+\eta$. This is therefore a generalization of Cao's approach \cite{Cao85} by using K\"ahler-Ricci flow to prove the existence of K\"ahler-Einstein metric on K\"ahler manifold of negative first Chern class.
In particular, when $\eta=0$, we have $c_1^{BC}(X)<0$ hence we have $c_1(X)<0$ and $X$ is a K\"ahler manifold, this is \cite[Theorem 1.7]{TW15}. 

\medskip
Note that in general, one cannot assume $\eta$ to be closed, in contrast with the twisted K\"ahler-Ricci flow. Let us stress also that the limit of the normalized twisted Chern-Ricci flow exists without assuming that the manifold is K\"ahler (a necessary assumption when studying the long term behavior of the Chern-Ricci flow). Therefore  the twisted Chern-Ricci flow is somehow more natural in this context.

\medskip
As an application of Theorem D, we give an alternative proof of the existence of a unique smooth solution for the following  Monge-Amp\`ere equation 
$$(\omega+dd^c\f)^n=e^{\f}\Omega.$$
We show that the solution is the limit of the potentials of a suitable twisted normalized  Chern-Ricci flow. Cherrier \cite {Cher87} proved this result by generalizing the elliptic approach of Aubin \cite{Aub78} and \cite{Yau}.  

\medskip
The paper is organized as follows. In Section \ref{strategy}, we recall some notations in Hermitian manifolds. In Section \ref{a priori estm} we prove  various {\sl a priori} estimates following our previous work \cite{To16}. The main difference is that we will use the recent result of Ko\l{}oziedj's uniform type estimates for Monge-Amp\`ere on Hermitian manifolds (cf. \cite{DK12, Blo11, Ng16}) instead of the one on K\"ahler manifolds to bound the oscillation of the solution.  The second arises when estimating the gradient and the Laplacian: we use a special local coordinate system due to Guan-Li \cite[Lemma 2.1]{GL10} instead of the usual normal coordinates in K\"ahler geometry. In Section \ref{proof} we prove Theorem B and Theorem C. In Section \ref{proof of conjecture}, we prove the conjecture.  In Section \ref{definition of CRF} we define the twisted Chern-Ricci flow and prove the existence of a unique maximal solution using the estimates in Section \ref{a priori estm}. The approach is different from the one for the Chern-Ricci flow due to Tosatti-Weinkove \cite{TW15}. We also show that the twisted Chern-Ricci flow on negative twisted Bott-Chern class smoothly converges to the unique twisted Einstein metric. 

\medskip
\textbf{Acknowledgement.} The author is grateful to his supervisor Vincent Guedj for support, suggestions and encouragement.  The author  thanks Valentino Tosatti and Ben Weinkove for their interest in this work and helpful comments. We also thank Thu Hang Nguyen, Van Hoang Nguyen and Ahmed Zeriahi  for very useful discussions. The author would like to thank  the referee for useful comments and suggestions. This work is supported by the Jean-Pierre Aguilar fellowship of the CFM foundation.
\section{Preliminaries}\label{strategy}
\subsection{Chern-Ricci curvature on Hermitian manifold}
Let $(X,g)$ be a compact Hermitian manifold of complex dimension $n$. In local coordinates, $g$ is determined by the $n\times n$ Hermitian matrix $(g_{i\bar{j}})=g(\partial_i,\partial_{\bar{j}})$. We write $\omega=\sqrt{-1}g_{i\bar{j}}dz_i\wedge d\bar{z}_j$ for its associated $(1,1)$-form. 

\medskip
We define the Chern connection $\nabla$ associated to $g$ as follows.  If $X=X^j\partial_j$ is a vector field  and $\alpha=a_idz_i$ is a $(1,0)$-form then theirs covariant derivatives have components $$\nabla_i X^k=\partial_iX^j+ \Gamma^k_{ij}X^j,\quad \nabla_ia_j=\partial_i a_j -\Gamma_{ij}^ka_k,$$
where the Christoffel symbols $\Gamma^k_{ij}$ are given by
$$\Gamma^k_{ij}=g^{\bar{l}k}\partial_ig_{j\bar{l}}.$$ 

\medskip
We define the torsion tensors $T$ and $\bar{T}$ of $\omega$ as follows
\begin{align*}
T &=\sqrt{-1}\partial \omega =\frac{1}{2} T_{ij\bar{k}}dz_i\wedge dz_j\wedge d\bar{z}_k    \\
\bar{T}&=\sqrt{-1}\bar{\partial}\omega=\frac{1}{2}\bar{T}_{\bar{i}\bar{j}k}d\bar{z}_i\wedge d\bar{z}_j\wedge dz_k.
\end{align*}
where $$T_{ij\bar{k}}=\partial_{i}g_{j\bar{k}}-\partial_jg_{i\bar{k}}, \text{ and } \bar{T}_{\bar{i}\bar{j}k}=\partial_{\bar{j}}g_{k\bar{i}}-\partial_{\bar{i}}g_{k\bar{j}}.$$
Then the {\sl torsion tensor} of $\omega$ has component $$T^k_{ij}=\Gamma^k_{ij}-\Gamma^k_{ji}=g^{\bar{l}k}T_{ij\bar{l}}.$$

\begin{defi}
The {\sl Chern-Ricci curvature} of $g$ is  the tensor $$R_{k\bar{l}} (g):=R_{k\bar{l}} (\omega):=g^{\bar{j}i}R_{k\bar{l}i\bar{j}}=-\partial_k\partial_{\bar{l}}\log\det g,$$
and the {\sl Chern-Ricci form} is 
$$Ric(g):=Ric(\omega):=\frac{\sqrt{-1}}{\pi}R^C_{k\bar{l}}dz_k\wedge d\bar{z}_l=-dd^c \log\det g,$$
where  $$d:=\partial +\bar{\partial},\quad d^c:= \frac{1}{2i\pi}(\partial-\bar{\partial}).$$
\end{defi}
It is a closed real $(1,1)$-form and its cohomology class in the {\sl Bott-Chern cohomology group}
$$H^{1,1}_{BC}(X,\R):=\frac{\{\text{closed real (1,1)-forms}\}}{\{\sqrt{-1}\partial\bar{\partial}\psi, \psi\in C^\infty(X,\R)\}}$$
is the {\sl first Bott-Chern class} of $X$, denoted by  $c^{BC}_1(X)$, which is independent of the choice of Hermitian metric $g$. We also write $R=g^{k\bar{l}}R_{k\bar{l}}$ for the {\sl Chern scalar curvature}.

\subsection{Plurisubharmonic functions and Lelong number}
Let $(X,\omega)$ be a compact Hermitian manifold. 
\begin{defi}
We let $PSH(X,\omega)$ denote the set of all {\sl$\omega$-plurisubharmonic functions} ($\omega$-psh for short), i.e the set of functions $\f\in L^1(X,\R\cup \{-\infty\})$ which can be locally written as the sum of a smooth and a plurisubharmonic function, and such that$$ \omega+dd^c\f\geq 0$$
in the weak sense of positive currents. 
\end{defi}

\begin{defi}
Let $\f$ be a $\omega$-psh function and  $x\in X$. The {\sl Lelong number} of $\f$ at $x$ is
$$\nu(\f,x):=\liminf_{z\rightarrow x} \frac{\f(z)}{\log |z-x|}.$$
We say $\f$ has a {\sl logarithmic pole of coefficient $\gamma$} at $x$ if $\nu(\f,x)=\gamma$. 
\end{defi}

\section{ A priori estimates for complex Monge-Amp\`ere flows}\label{a priori estm}
In this section we prove various {\sl a priori} estimates for $\f_t$ which satisfies

\begin{equation*}
\frac{\partial \f_t}{\partial t}=\log  \frac{(\theta_t+dd^c \f_t)^n}{\Omega} -F(t,z,\f) \qquad \qquad (CMAF)
\end{equation*}
with a smooth strictly $\omega$-psh initial data $\varphi_0$, where $\Omega$ is a smooth volume form, $(\theta_t)_{t\in [0,T]}$ is a family of Hermitian forms on $X$ and $(t,z,s)\mapsto F(t,z,s)$ is a smooth function on $[0,T]\times X\times \R$ with  
\begin{equation}\label{condition of F}
\frac{\partial F}{\partial s}\geq 0 \text{ and } \frac{\partial F}{\partial t}> B,
\end{equation}
for some $B\in \R$.

\medskip
Since we are interested in the behavior near 0 of $(CMAF)$, we can further assume that
\begin{equation}
\frac{\omega}{2}\leq \theta_t\leq 2\omega\; \text{ and }\; \delta^{-1}\Omega \leq \theta^n_t\leq\delta \Omega,\forall t\in [0,T]\; \text{ for some } \delta>0, 
\end{equation}
\begin{equation}\label{condition of theta}
\theta_t-t\dot{\theta}_t\geq 0\, \text{ for }\, 0\leq t\leq T.
\end{equation}
The assumption (\ref{condition of theta}) will be used to bound $\dot{\f}_t$ from above. 
\subsection{Bounds on $\f_t$ and $\dot{\f}_t$}\label{C^0 estimates}
As in the K\"ahler case, the upper bound of $\f$ is a simple consequence of the maximal principle (see \cite[Lemma 2.1]{To16}). 

\medskip
 For a lower bound of $\f_t$, we have
\begin{lem}
There is a constant $C>0$  depending only on $\inf_X\f_0$ such that,
$$\f_t\geq \inf_X\f_0-Ct, \quad\forall (t,x)\in [0,T]\times X.$$
\end{lem}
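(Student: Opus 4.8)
The plan is to apply the parabolic maximum principle to the auxiliary function $H(t,x) := \f_t(x) - \inf_X\f_0 + Ct$ on $[0,T]\times X$, choosing the constant $C>0$ large enough (depending only on $\inf_X\f_0$, together with the fixed data $\delta$, $B$, and a bound for $F$ on the relevant compact set) so that $H$ cannot attain a negative minimum. First I would note that at $t=0$ we have $H(0,\cdot) = \f_0 - \inf_X\f_0 \geq 0$, so the initial condition is under control. Since $X$ is compact, if $H$ ever goes negative it attains an interior spatial minimum at some $(t_0,x_0)$ with $t_0>0$; at such a point the standard maximum principle inequalities give $\partial_t H \leq 0$, $dd^c \f_{t_0}(x_0) \geq 0$, and hence $\theta_{t_0} + dd^c\f_{t_0} \geq \theta_{t_0} \geq \tfrac12\omega > 0$ there.

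The key computation is then to bound $\partial_t H = \dot{\f}_{t_0}(x_0) + C$ from below at that point. Using $(CMAF)$, $\dot{\f}_{t_0} = \log\frac{(\theta_{t_0}+dd^c\f_{t_0})^n}{\Omega} - F(t_0, x_0, \f_{t_0})$. At the minimum point $dd^c\f_{t_0}(x_0)\ge 0$ gives $(\theta_{t_0}+dd^c\f_{t_0})^n \geq \theta_{t_0}^n \geq \delta^{-1}\Omega$, so the logarithmic term is $\geq -\log\delta$. For the $F$-term I would use $\partial F/\partial s \geq 0$: since at the minimum $\f_{t_0}(x_0) = \inf_X\f_0 - Ct_0 \leq \inf_X\f_0 \leq \sup_X \f_0$ (the upper bound for $\f$ being the elementary one cited from \cite[Lemma 2.1]{To16}), monotonicity in $s$ lets us replace $F(t_0,x_0,\f_{t_0}(x_0))$ by $F$ evaluated at a value in a fixed bounded $s$-range, hence $|F| \leq C_F$ for a constant depending only on $\sup_X\f_0$ (itself controlled by $\f_0$) and the fixed data. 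Thus $\partial_t H \geq -\log\delta - C_F + C$, which is $> 0$ once $C := \log\delta + C_F + 1$, contradicting $\partial_t H \leq 0$. Therefore $H \geq 0$ on all of $[0,T]\times X$, which is exactly the claimed inequality (absorbing the harmless constants, or simply noting the statement only asserts existence of such a $C$).

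One technical subtlety to handle carefully is the monotonicity argument for the $F$-term: I must make sure the $s$-value $\f_{t_0}(x_0)$ is being compared on the correct side. Since $\partial F/\partial s\geq 0$ and $\f_{t_0}(x_0)\le \sup_X\f_0$, we get $-F(t_0,x_0,\f_{t_0}(x_0)) \ge -F(t_0,x_0,\sup_X\f_0) \ge -\sup_{[0,T]\times X}F(\cdot,\cdot,\sup_X\f_0) =: -C_F$, which is the direction needed. The hypothesis $\partial F/\partial t > B$ is not actually needed for this particular lemma (it is used elsewhere, e.g.\ to bound $\dot\f_t$), so I would not invoke it here. The main (and essentially only) obstacle is bookkeeping: being sure that every constant entering $C$ depends only on the allowed quantities, and that the maximum principle is applied on the closed time interval $[0,T]$ where $\f_t$ is a priori smooth — which is legitimate since in this section $\f_t$ is assumed to be a smooth solution with smooth strictly $\omega$-psh initial data.
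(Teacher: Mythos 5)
Your proof is correct and is essentially the paper's argument: the paper verifies that $\psi_t=\inf_X\f_0-Ct$ is a subsolution (using $\theta_t^n\geq \delta^{-1}\Omega$ and the monotonicity of $F$ in $s$) and then invokes the comparison principle \cite[Proposition 1.5]{To16}, which is exactly the maximum-principle computation you carry out by hand on $\f_t-\psi_t$. One cosmetic adjustment: at the negative minimum you already have $\f_{t_0}(x_0)\leq \inf_X\f_0$, so compare $F$ at $s=\inf_X\f_0$ rather than at $\sup_X\f_0$; this yields a constant depending only on $\inf_X\f_0$, as the lemma states.
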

\begin{proof}
Set $$\psi:=\inf_X\f_0-Ct,$$
where $C$ will be chosen hereafter.
Since we assume that $2\theta_t\geq \omega$,
$$\theta_t+dd^c\psi\geq  \frac{1}{2}\omega.$$
Combine with $\omega^n\geq 2^{-n}\theta^n_t\geq \Omega/(2^n\delta) $, we have
$$\frac{(\theta_t+dd^c\psi)^n}{\Omega}\geq\frac{1}{2^n}\frac{\omega^n}{\Omega}\geq \frac{1}{4^n\delta}.$$
We now choose $C>0$ satisfying 
$$-C+\sup_{[0,T]\times X}F(t,x,\inf_X\f_0)\leq \frac{1}{4^n\delta}.$$
hence 
$$\frac{\partial \psi_t}{\partial t}\leq \frac{(\theta_t+dd^c\psi)^n}{\Omega}-F(t,x,\psi),$$ 
 It follows from the maximum principle \cite[Proposition 1.5]{To16} that
 $$\f_t\geq \psi_t,$$
 as required.
\end{proof}
\medskip
For another lower bound, we follow the argument in \cite{GZ13}, replacing the uniform a priori bound of Ko\l{}odziej \cite{Kol98} by its Hermitian version (see for instance \cite[Theorem 2.1]{Ng16}).    First, we assume that $\theta_t\geq \omega+t\chi,\,\forall t\in [0,T],$ for some smooth $(1,1)$-form $\chi$. Let $0<\beta < +\infty$ be  such that 
$$\chi+(2\beta-1)\omega\geq 0.$$
It follows from  Ko\l{}odziej's uniform type estimate for Monge-Amp\`ere equation on Hermitian manifolds (cf. \cite[Theorem 2.1]{Ng16}) that the exists a continuous $\omega$-psh solution $u$ of the equation
\begin{equation*}
(\omega+dd^c u)^n=e^{ u-2\beta\f_0}\omega^n,
\end{equation*}  
which satisfies 
$$||u||_{L^\infty(X)}<C,$$
where $C$ only depends on $||e^{-2\beta\f_0}||_{L^p(X)}$, for some $p>1$.

\begin{rmk}
Latter on we will replace $\f_0$ by smooth approximants $\f_{0,j}$ of initial data. Since the latter one has zero Lelong numbers, Skoda's integrability theorem \cite{Sko} will provide a uniform bound for $||e^{-2\beta\f_0}||_{L^p(X)}$ and $||u||_{L^\infty(X)}$.
\end{rmk}
\begin{lem}\label{bound from below}
For all $z\in X $ and $0<t<\min(T,(2\beta)^{-1})$, we have
\begin{equation}
\f_t(z)\geq (1-2\beta t)\f_0(z)+t u(z)+n(t\log t-t)-At,
\end{equation}
where  $A$ depends on $\sup_X\f_0$. In particular, there exists $c(t)\geq 0$ such that
$$\f_t(z)\geq \f_0(z)-c(t),$$
 with $c(t)\searrow 0$ as $t\searrow 0$. 

\end{lem}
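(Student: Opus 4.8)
\textbf{Proof proposal for Lemma \ref{bound from below}.}

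The plan is to construct an explicit subsolution of $(CMAF)$ out of the data $\f_0$, $u$, and a logarithmic term in $t$, and then invoke the maximum principle (\cite[Proposition 1.5]{To16}) to compare it with $\f_t$. Set
$$\psi_t(z):=(1-2\beta t)\f_0(z)+tu(z)+n(t\log t-t)-At,$$
where $A>0$ is to be chosen. Since $\f_0$ is $\omega$-psh, $u$ is $\omega$-psh, and $1-2\beta t\in(0,1)$ for $t<(2\beta)^{-1}$, convexity of the positivity condition gives
$$dd^c\psi_t=(1-2\beta t)dd^c\f_0+t\,dd^c u\geq -(1-2\beta t)\omega-t\omega=-\omega+2\beta t\,\omega.$$
Hence, using the hypothesis $\theta_t\geq\omega+t\chi$ together with $\chi+(2\beta-1)\omega\geq0$,
$$\theta_t+dd^c\psi_t\geq\omega+t\chi+2\beta t\,\omega-\omega=t(\chi+2\beta\omega)\geq t\bigl(\chi+(2\beta-1)\omega\bigr)+t\omega\geq t\omega.$$
Therefore $\psi_t$ is $\theta_t$-psh for $t>0$ and, since $\omega^n\geq 2^{-n}\delta^{-1}\Omega$ is comparable to $\Omega$, we get the lower bound on the Monge-Amp\`ere measure
$$\frac{(\theta_t+dd^c\psi_t)^n}{\Omega}\geq t^n\frac{\omega^n}{\Omega}\geq\frac{t^n}{2^n\delta}.$$

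Next I would check that $\psi_t$ is a subsolution. Differentiating,
$$\frac{\partial\psi_t}{\partial t}=-2\beta\f_0(z)+u(z)+n\log t-A,$$
so we must arrange
$$-2\beta\f_0+u+n\log t-A\leq\log\frac{(\theta_t+dd^c\psi_t)^n}{\Omega}-F(t,z,\psi_t).$$
The key point is that $u$ was chosen to solve $(\omega+dd^c u)^n=e^{u-2\beta\f_0}\omega^n$, which is exactly what makes the left-hand side match the leading term on the right: indeed $\log\frac{(\theta_t+dd^c\psi_t)^n}{\Omega}\geq\log\frac{t^n\omega^n}{\Omega}=n\log t+\log\frac{\omega^n}{\Omega}$, and one wants $u-2\beta\f_0+\log\frac{\omega^n}{\Omega}$ to dominate $u-2\beta\f_0+\log\frac{(\omega+dd^cu)^n/\Omega}{e^{u-2\beta\f_0}}$ up to a constant; the discrepancy $\log(\omega^n/(\omega+dd^cu)^n)+u-2\beta\f_0$ is not obviously signed, but after absorbing $u$, which is bounded by the Ko\l{}odziej estimate, and using that $F(t,z,\psi_t)$ is bounded on the relevant range (here one uses $\partial F/\partial s\geq0$ to reduce to $F$ evaluated at a bounded function once $\psi_t$ has a crude two-sided bound, plus $\partial F/\partial t>B$), one chooses $A$ depending on $\sup_X\f_0$, on $\|u\|_{L^\infty}$, on $\delta$, and on $\sup F$ large enough that the inequality holds for all $t\in(0,\min(T,(2\beta)^{-1}))$. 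Then \cite[Proposition 1.5]{To16} yields $\f_t\geq\psi_t$ on $(0,\min(T,(2\beta)^{-1}))\times X$, which is the displayed inequality.

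For the final assertion, set
$$c(t):=2\beta t\sup_X|\f_0|+t\,\|u\|_{L^\infty(X)}+n|t\log t-t|+At\geq 0,$$
so that $\f_t(z)\geq\f_0(z)-c(t)$ pointwise and $c(t)\searrow0$ as $t\searrow0$, since each summand tends to $0$ (note $t\log t\to0$). The main obstacle I anticipate is the subsolution verification in the previous paragraph: unlike the K\"ahler case one cannot freely integrate by parts, and the Monge-Amp\`ere inequality must be pushed through using only the pointwise positivity $\theta_t+dd^c\psi_t\geq t\omega$ and the comparison of volume forms; one has to be a little careful that the constant $A$ can be chosen uniformly in $t$ near $0$, which is why the term $n(t\log t-t)$ is inserted — its $t$-derivative produces precisely the $n\log t$ needed to cancel $\log t^n$ coming from the degenerate lower bound $(\theta_t+dd^c\psi_t)^n\geq t^n\omega^n$.
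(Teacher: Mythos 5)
Your subsolution $\psi_t=(1-2\beta t)\f_0+tu+n(t\log t-t)-At$ is the same one the paper uses, but the way you estimate its Monge--Amp\`ere measure breaks the argument. First, there is an arithmetic slip: $-(1-2\beta t)\omega-t\omega=-\omega+(2\beta-1)t\,\omega$, not $-\omega+2\beta t\,\omega$, so your chain only yields $\theta_t+dd^c\psi_t\geq t\bigl(\chi+(2\beta-1)\omega\bigr)\geq 0$, and the bound $\geq t\omega$ you rely on is not established (the hypothesis is only $\chi+(2\beta-1)\omega\geq0$, with no strict positivity). More seriously, even granting $(\theta_t+dd^c\psi_t)^n\geq t^n\omega^n$, the subsolution inequality you then need reduces to
\begin{equation*}
-2\beta\f_0+u-A+F(t,z,\psi_t)\;\leq\;\log\frac{\omega^n}{\Omega},
\end{equation*}
and this cannot be closed by any constant $A$: the lemma is precisely about $\f_0$ that is merely $\omega$-psh with zero Lelong numbers, hence typically unbounded below, so $-2\beta\f_0$ is unbounded above and is \emph{not} controlled by $\sup_X\f_0$, $\|u\|_{L^\infty}$, $\delta$ or $\sup F$. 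Your ``discrepancy'' discussion hides exactly this term (in fact, by the equation for $u$ the expression $\log(\omega^n/(\omega+dd^cu)^n)+u-2\beta\f_0$ is identically zero, so it sheds no light on the real obstruction). The same issue resurfaces in your final step, where $c(t)$ is written with $\sup_X|\f_0|$, which may be $+\infty$; only $\sup_X\f_0$ is available.

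The fix is to keep, rather than discard, the structure that the function $u$ was introduced for. Decompose exactly as in the paper:
\begin{equation*}
\theta_t+dd^c\psi_t\;\geq\;\omega+t\chi+dd^c\psi_t\;=\;(1-2\beta t)(\omega+dd^c\f_0)+t(\omega+dd^cu)+t\bigl[\chi+(2\beta-1)\omega\bigr]\;\geq\;t(\omega+dd^cu),
\end{equation*}
so that $(\theta_t+dd^c\psi_t)^n\geq t^n(\omega+dd^cu)^n=t^n e^{\,u-2\beta\f_0}\omega^n$. The factor $e^{-2\beta\f_0}$ coming from Ko\l{}odziej's solution is exactly what cancels the unbounded term $-2\beta\f_0$ in $\partial_t\psi_t$; after this cancellation the required inequality becomes $F(t,z,\psi_t)-A\leq\log(\omega^n/\Omega)$, which holds for $A$ depending only on $\sup_X\f_0$ (via an upper bound $C_0$ for $\psi_t$ and the monotonicity $\partial F/\partial s\geq0$) and on $\delta$. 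Then the comparison principle of \cite[Proposition 1.5]{To16} gives $\f_t\geq\psi_t$, and the ``in particular'' statement follows with $c(t)=2\beta t(\sup_X\f_0)^{+}+t\|u\|_{L^\infty}+n(t-t\log t)+At$.
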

\begin{proof}
Set $$\phi_t:=(1-2\beta t)\f_0+ t u+n(t\log t-t)-At,$$  where $A:=\sup_{[0,T]\times X}F(t,z,C_0)$ with $(1-2\beta t)\f_0+ t u+n(t\log t-t)\leq C_0$ for all $t\in [0,\min(T,(2\beta)^{-1})]$. 

\medskip
By our choice of $\beta$ we have
\begin{eqnarray*}
\theta_t+dd^c\phi_t&\geq& \omega +t\chi+dd^c\phi_t \\
&=&(1-2\beta t)(\omega+dd^c\f_0)+ t (\omega +dd^c u)+ t[\chi+(2\beta-1 )\omega] \\
&\geq&t (\omega +dd^c u)\geq  0.
\end{eqnarray*}
Moreover 
$$ (\theta_t+dd^c \phi_t)^n\geq t^n(\omega+dd^c u)^n=e^{ \partial_t\phi_t +A}\geq e^{\partial_t \phi_t+F(t,z,\phi_t)},$$
hence $\phi_t$ is a subsolution to $(CMAF)$. Since $\phi_0=\f_0$ the conclusion follows from the maximum principle \cite[Proposition 1.5]{To16}.
\end{proof}

\medskip

The lower bound for $\dot{\f}$ comes from the same argument in \cite[Proposition 2.6]{To16}:

\begin{prop}\label{bound f'}
Assume $\f_0$ is bounded. There exist constants $A>0$ and $C=C(A, Osc_X\f_0)>0$ such that  for all $(x,t)\in X\times (0,T]$, 
$$\dot{\f}\geq n\log t-AOsc_X \f_0-C.$$
\end{prop}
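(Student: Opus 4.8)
The plan is to follow the maximum-principle strategy used for the upper and lower bounds on $\f_t$, but now applied to a suitable auxiliary function involving $\dot\f$. First, differentiating $(CMAF)$ in $t$, one gets
\begin{equation*}
\frac{\partial}{\partial t}\dot\f = \Delta_{\f}\dot\f + \tr_{\f}(\dot\theta_t) - \frac{\partial F}{\partial t} - \frac{\partial F}{\partial s}\dot\f,
\end{equation*}
where $\Delta_\f$ denotes the Laplacian with respect to $\om_\f:=\theta_t+dd^c\f_t$ and $\tr_\f$ the trace with respect to it. The point of the hypotheses is that $\partial F/\partial s\geq 0$ makes the zeroth-order term in $\dot\f$ have a favorable sign at a negative minimum, while $\partial F/\partial t > B$ controls the inhomogeneous term from below; the term $\tr_\f(\dot\theta_t)$ is a priori only bounded below by a (possibly large negative) multiple of $\tr_\f\om$, so it cannot be discarded directly and must be absorbed.

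The key step is to consider $H:=t\dot\f - n t\log t - (\text{const})\cdot(\f_t - \text{something})$, or more precisely the quantity used in \cite[Proposition 2.6]{To16}: set $H := t\dot\f - A(\f_t - \inf_X\f_0) + nt - nt\log t$ (or a close variant, with $A$ to be chosen). Applying the heat operator $\partial_t - \Delta_\f$ to $H$ and using $\partial_t\f = \dot\f$ together with $\Delta_\f\f_t = n - \tr_\f\theta_t$, the term $-A\Delta_\f\f_t = -An + A\tr_\f\theta_t$ produces a positive multiple of $\tr_\f\theta_t \geq \tfrac12\tr_\f\om$, which for $A$ large enough dominates $t\,\tr_\f(\dot\theta_t)$ (using $\theta_t - t\dot\theta_t \geq 0$, i.e. \eqref{condition of theta}) and also dominates the $-An$ from $-F$-type terms via the arithmetic–geometric mean inequality bounding $-\tr_\f\om$ below in terms of $(\om_\f^n/\om^n)^{1/n}$. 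At an interior minimum of $H$ on $[0,T]\times X$ (the boundary $t=0$ being handled since $H\to 0$, or rather $H$ is controlled there because $t\dot\f\to 0$ and $t\log t\to 0$), one gets $(\partial_t-\Delta_\f)H\leq 0$ and $dd^c H\geq 0$, which forces a pointwise lower bound on $e^{\dot\f}$ hence on $\dot\f$ at that point, and this propagates to a global lower bound $t\dot\f \geq nt\log t + A(\f_t-\inf_X\f_0) - Ct$. Dividing by $t$ and using $\f_t-\inf_X\f_0 \geq -C t$ from the earlier lemma (so $A(\f_t-\inf_X\f_0)/t \geq -AC$), together with $\mathrm{Osc}_X\f_0$ appearing when one replaces $\inf_X\f_0$ by a bound involving the oscillation, yields $\dot\f \geq n\log t - A\,\mathrm{Osc}_X\f_0 - C$.

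The main obstacle I anticipate is the correct bookkeeping of the bad term $\tr_\f(\dot\theta_t)$: one must exploit precisely the structural assumption $\theta_t - t\dot\theta_t\geq 0$ so that $t\,\tr_\f(\dot\theta_t)\leq \tr_\f\theta_t$ is absorbed, rather than merely bounded; getting the constant $A$ and the coupling with the $\tr_\f\om$ coming from $-A\Delta_\f\f_t$ to balance is the delicate part. A secondary subtlety is the behavior at $t=0$: since $\f_0$ is only bounded (not smooth a priori — though in this section we assume $\f_0$ smooth strictly $\om$-psh), one checks that $H$ extends continuously with $H|_{t=0}$ finite, so the minimum is either attained at an interior point or the estimate is trivially satisfied near $t=0$ for $C$ large. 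All of this is essentially the argument of \cite[Proposition 2.6]{To16} with the K\"ahler normal-coordinate computations replaced by the Guan–Li coordinates \cite[Lemma 2.1]{GL10}, but since the quantities $\Delta_\f\f_t$ and $\tr_\f$ are defined intrinsically, no change is needed in this particular estimate beyond invoking the Hermitian form of the maximum principle \cite[Proposition 1.5]{To16}.
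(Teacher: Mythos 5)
Your general strategy (parabolic maximum principle applied to a quantity built from $\dot\f$, using $\partial F/\partial s\geq 0$; no Guan--Li coordinates needed, so the Hermitian setting changes nothing) is indeed the right flavor --- the paper itself gives no argument here but simply invokes \cite[Proposition 2.6]{To16}. However, your sketch has two concrete problems. First, a sign error in the key absorption step: with $H=t\dot\f-A(\f_t-\inf_X\f_0)+nt-nt\log t$, the heat operator gives
$$\Big(\frac{\partial}{\partial t}-\Delta_{\omega_t}\Big)(-A\f_t)=-A\dot\f+A\Delta_{\omega_t}\f_t=-A\dot\f+An-A\tr_{\omega_t}\theta_t,$$
so the trace term enters with coefficient $-A$, not $+A$; it cannot dominate $t\,\tr_{\omega_t}\dot\theta_t$ as you claim (that absorption, via (\ref{condition of theta}), is what the paper's Proposition \ref{bound f' above} uses for the \emph{upper} bound, where the signs cooperate). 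To get $+A\tr_{\omega_t}\theta_t\geq \frac{A}{2}\tr_{\omega_t}\omega$ you need $+A\f_t$ in the test function. Second, and more seriously, the endgame does not produce the stated estimate. A minimum principle for your $H$ yields $H\geq \min H$, a single constant; at $t=0$ one has $H=-A(\f_0-\inf_X\f_0)$, which can be as negative as $-A\,Osc_X\f_0$, so the best you can propagate is $t\dot\f\geq nt\log t-A\,Osc_X\f_0-Ct$, and dividing by $t$ leaves $-\,(A\,Osc_X\f_0+C)/t$, not the uniform bound $n\log t-A\,Osc_X\f_0-C$. Your claimed intermediate inequality $t\dot\f\geq nt\log t+A(\f_t-\inf_X\f_0)-Ct$ does not follow from the minimum principle you set up.

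The missing mechanism is the one that makes the coefficient $n\log t$ come out uniformly: work with a quantity without the factor $t$ in front of $\dot\f$, e.g.\ $G=\dot\f+A\f_t-n\log t$. Since $\dot\f$ is bounded near $t=0$ for the smooth approximants while $-n\log t\to+\infty$, $G$ attains its minimum at some $t_0>0$, and there
$$0\geq \Big(\frac{\partial}{\partial t}-\Delta_{\omega_t}\Big)G\geq A\tr_{\omega_t}\theta_t-C\tr_{\omega_t}\omega+(A-\partial_sF)\dot\f-C'-\frac{n}{t_0}.$$
The point is that $\tr_{\omega_t}\omega\geq n\big(\omega^n/\omega_t^n\big)^{1/n}\geq c\,e^{-(\dot\f+F)/n}$, so if $\dot\f\leq n\log t_0-K$ this term is $\gtrsim e^{K/n}/t_0$ and beats both $n/t_0$ and the terms linear in $|\dot\f|$; this pins $\dot\f\geq n\log t_0-K$ at the minimum point with $K$ independent of $t_0$ and of $\inf_X\f_0$. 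Propagating $G\geq G(t_0,x_0)$ and using only the two-sided $L^\infty$ bounds $\inf_X\f_0-Ct\leq\f_t\leq\sup_X\f_0+Ct$ is then exactly where $Osc_X\f_0$ enters, giving $\dot\f\geq n\log t-A\,Osc_X\f_0-C$. (A small further point: for this lower bound what one uses is an upper bound for $\partial F/\partial t$ on the relevant compact range, available since $F$ is smooth and $\f$ is uniformly bounded there; the hypothesis $\partial F/\partial t>B$ is the one needed for the upper bound on $\dot\f$.) Your sketch, as written, does not contain this comparison between $e^{-\dot\f/n}$ and $1/t$, which is the heart of the estimate.
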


\medskip
We now prove a crucial estimate for $\dot{\f}_t$ which allows us to use the uniform version of Kolodziej's uniform type estimates in order to get the bound of $Osc_X \f_t$. The proof is the same in \cite{GZ13,To16}, but we include a proof for  the reader's convenience.
\begin{prop}\label{bound f' above}
 There exists $0<C=C(\sup_X \varphi_0,T)$ such that for all $0< t\leq T$ and $z\in X$,
$$\dot{\f}_t(z)\leq \frac{-\f_0(z)+C}{t}.$$
\end{prop}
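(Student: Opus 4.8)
The plan is to run the maximum principle, but applied to the quantity $H := t\dot{\f}_t + \f_t - (1+\varepsilon)\f_0$ for a suitable comparison, or — more directly, following \cite{GZ13,To16} — to the function
\[
G(t,z) := t\,\dot{\f}_t(z) + \f_t(z) - A t
\]
for a constant $A$ to be determined, and to show $G$ is bounded above on $[0,T]\times X$; rearranging the resulting inequality $t\dot{\f}_t \le -\f_t + \f_0 + (\text{const})$ together with the lower bound $\f_t \ge \f_0 - c(t)$ from Lemma \ref{bound from below} will then be almost enough, but to land exactly on $\dot\f_t(z) \le (-\f_0(z)+C)/t$ I expect one actually wants to test the maximum principle on
\[
G(t,z) := t\,\dot{\f}_t(z) + \f_t(z) - \f_0(z).
\]
First I would compute the evolution of $\dot{\f}_t$: differentiating $(CMAF)$ in $t$ gives $\partial_t\dot{\f}_t = \Delta_{\omega'}\dot{\f}_t + \tr_{\omega'}\dot{\theta}_t - \partial_t F - (\partial_s F)\dot{\f}_t$, where $\omega' = \theta_t+dd^c\f_t$ and $\Delta_{\omega'}$ is the (Chern) Laplacian with respect to $\omega'$. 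Then I would compute $(\partial_t - \Delta_{\omega'})G$, using $\partial_t\f_t = \dot\f_t$, $\Delta_{\omega'}\f_t = \tr_{\omega'}(dd^c\f_t) = \tr_{\omega'}(\omega' - \theta_t) = n - \tr_{\omega'}\theta_t$, and $\Delta_{\omega'}\f_0 = \tr_{\omega'}(dd^c\f_0) = \tr_{\omega'}(\theta_0 + dd^c\f_0) - \tr_{\omega'}\theta_0 \ge -\tr_{\omega'}\theta_0$ since $\f_0$ is $\omega$-psh and $\theta_0 = \omega$.

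Assembling these, the terms combine as follows: the $t\Delta_{\omega'}\dot\f_t$ piece cancels against $\Delta_{\omega'}(t\dot\f_t)$; one is left with something of the shape
\[
(\partial_t - \Delta_{\omega'})G \;=\; \dot\f_t + t\,\tr_{\omega'}\dot\theta_t - t\,\partial_t F - t(\partial_s F)\dot\f_t - \dot\f_t - n + \tr_{\omega'}\theta_t + \tr_{\omega'}\f_0\text{-term},
\]
and the key cancellation is that $t\,\tr_{\omega'}\dot\theta_t + \tr_{\omega'}\theta_t = \tr_{\omega'}(\theta_t + t\dot\theta_t)$ — wait, here one uses instead the hypothesis \eqref{condition of theta}, namely $\theta_t - t\dot\theta_t \ge 0$, which gives $\tr_{\omega'}\theta_t - t\,\tr_{\omega'}\dot\theta_t \ge 0$, so it is the combination $\tr_{\omega'}\theta_t + (\text{sign-controlled})$ that survives nonnegatively, together with $-\tr_{\omega'}\theta_0 = -2\tr_{\omega'}\theta$-type terms coming from $\Delta_{\omega'}\f_0$; since $\theta_0 = \omega \le 2\theta_t$, these are dominated by $\tr_{\omega'}\theta_t$. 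The upshot is an inequality of the form $(\partial_t - \Delta_{\omega'})G \le -t(\partial_s F)\dot\f_t - t\,\partial_t F + C_0$; using $\partial_s F \ge 0$ together with the lower bound $\dot\f_t \ge n\log t - C$ from Proposition \ref{bound f'} to control the sign of $-t(\partial_s F)\dot\f_t$ on the part where $\dot\f_t<0$, and $\partial_t F > B$ to bound $-t\,\partial_t F \le -tB$, we get $(\partial_t - \Delta_{\omega'})G \le C_1$ on $[0,T]\times X$.

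Finally I would apply the maximum principle \cite[Proposition 1.5]{To16}: at an interior maximum of $G - C_1 t$ one has $(\partial_t - \Delta_{\omega'})(G - C_1 t) \le 0$ forced, while on $\{t=0\}$ the quantity $G = 0\cdot\dot\f_0 + \f_0 - \f_0 = 0$ (interpreting the $t\dot\f_t$ term as tending to $0$ — here one must be slightly careful and instead argue on $[\epsilon, T]$ and let $\epsilon \to 0$, or note $\limsup_{t\to 0} t\dot\f_t \le 0$ from Proposition \ref{bound f'}). Hence $G \le C_1 T =: C$ on all of $[0,T]\times X$, i.e. $t\dot\f_t + \f_t - \f_0 \le C$, and since $\f_t \ge \f_0 - c(t)$ with $c(t)$ bounded (Lemma \ref{bound from below}) we can absorb $-\f_t + \f_0$ into the constant, yielding $t\dot\f_t(z) \le -\f_0(z) + C$, which is the claim. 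The main obstacle I anticipate is bookkeeping the trace terms coming from $\Delta_{\omega'}\f_0$ and the torsion/non-Kähler corrections to the commutator $[\partial_t, \Delta_{\omega'}]$ — in the Hermitian setting $\Delta_{\omega'}$ is the Chern Laplacian and one should check no uncontrolled first-order (torsion) terms appear, or absorb them using $\omega/2 \le \theta_t \le 2\omega$; but since $\f_0$ is merely $\omega$-psh (not smooth), the cleanest route is to first prove the estimate with $\f_0$ replaced by a smooth strictly $\omega$-psh approximant $\f_{0,j}$ (as the section already does via the Remark after Lemma \ref{bound from below}) with constants independent of $j$, and then pass to the limit.
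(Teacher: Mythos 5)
Your proposal is not the paper's argument, and as written it does not close. The decisive issue is the sign structure of your test function. With $G=t\dot{\f}_t+\f_t-\f_0$ every dangerous term enters with the wrong sign: $(\partial_t-\Delta_{\omega_t})(-\f_0)=+\Delta_{\omega_t}\f_0$, and $\omega$-plurisubharmonicity of $\f_0$ only gives the \emph{lower} bound $\Delta_{\omega_t}\f_0\ge -\tr_{\omega_t}\omega$, which is useless for bounding $(\partial_t-\Delta_{\omega_t})G$ from \emph{above} (an upper bound would need $dd^c\f_0\le C\omega$, i.e.\ a $C^2$ bound on $\f_0$, which is not allowed); the trace terms combine to $+\tr_{\omega_t}(\theta_t+t\dot{\theta}_t)$, which hypothesis (\ref{condition of theta}) does not control — you notice this mid-computation and wave it away as ``dominated by $\tr_{\omega_t}\theta_t$'', but $\tr_{\omega_t}\theta_t$ is exactly the quantity that is unbounded at this stage of the bootstrap; and your final algebra is inconsistent: from $t\dot{\f}_t\le \f_0-\f_t+C$ and Lemma \ref{bound from below} you would get $t\dot{\f}_t\le C$ with $C=C(\sup_X\f_0,T)$, which is not the claimed inequality and is far stronger than anything your computation can justify. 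If you instead take the Guedj--Zeriahi combination $t\dot{\f}_t-\f_t+\f_0-nt$ (correct signs, conclude with the \emph{upper} bound $\f_t\le C(\sup_X\f_0,T)$, not with Lemma \ref{bound from below}), the ``key cancellation'' requires $\theta_t-t\dot{\theta}_t\ge\omega$, which holds for the affine K\"ahler path of \cite{GZ13} but is not assumed here; only $\theta_t-t\dot{\theta}_t\ge 0$ is. A further gap is your treatment of $-t(\partial F/\partial s)\dot{\f}$: controlling it via Proposition \ref{bound f'} (and via a quantitative bound on $\partial F/\partial s$ along the flow) imports constants depending on $Osc_X\f_0$, hence on $\inf_X\f_0$, whereas the whole point of the statement is that $C$ depends only on $\sup_X\f_0$ and $T$ — this is what later makes the Skoda/$e^{-\f_0/t}$ argument work for rough data. (Also, Proposition \ref{bound f'} is a lower bound on $\dot{\f}$, so it gives $\liminf_{t\to 0}t\dot{\f}_t\ge 0$, not the $\limsup\le 0$ you invoke at $t=0$; in the a priori setting $\f_0$ is smooth, so this boundary issue is harmless, but the justification is backwards.)

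The paper's proof avoids all of this by keeping $\f_0$ out of the test function. It applies the maximum principle to $G=t\dot{\f}_t-\f_t-nt+Bt^2/2$, for which
$$\Big(\frac{\partial}{\partial t}-\Delta_{\omega_t}\Big)G=-t\dot{\f}\,\frac{\partial F}{\partial s}+t\Big(B-\frac{\partial F}{\partial t}\Big)-\tr_{\omega_t}(\theta_t-t\dot{\theta}_t),$$
so only (\ref{condition of theta}) and the qualitative hypotheses $\partial F/\partial s\ge 0$, $\partial F/\partial t>B$ are used. At an interior maximum these force $\dot{\f}(t_0,z_0)<0$, hence $\sup G\le -\f_{t_0}(z_0)+C(T)$; the initial data enters only a posteriori, through Lemma \ref{bound from below} ($\f_{t_0}\ge\f_0-C_1$ with $C_1=C_1(\sup_X\f_0,T)$) and the elementary upper bound $\f_t\le C_3(\sup_X\f_0,T)$. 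No bound on $\partial F/\partial s$, no lower bound on $\dot{\f}$, and no control of $\Delta_{\omega_t}\f_0$ is ever needed. To repair your route you would have to either strengthen (\ref{condition of theta}) to $\theta_t-t\dot{\theta}_t\ge\omega$ and fix the signs as above, or adopt the paper's mechanism of extracting the sign of $\dot{\f}$ at the maximum.
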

\begin{proof}
We consider $G(t,z)= t\dot{\f_t}-\f_t-nt + Bt^2/2 $, with $B$ is the constant in (\ref{condition of F}). We obtain
$$\left(\frac{\partial }{\partial t}-\Delta_{\omega_t} \right)G=-t\dot{\f}\frac{\partial F}{\partial s}+t\left( B-\frac{\partial F}{\partial t}\right) -\tr_{\omega_t}(\theta_t-t\dot{\theta}_t).$$
Since we assume that $\theta_t-t\dot{\theta}_t\geq 0$ (see (\ref{condition of theta})),  we get
$$\left(\frac{\partial }{\partial t}-\Delta_{\omega_t} \right)G\leq -t\dot{\f}\frac{\partial F}{\partial s}+t\left( B-\frac{\partial F}{\partial t}\right).$$
If $G$ attains its maximum at $t=0$, we have the result. Otherwise, assume that $G$ attains its maximum at $(t_0,z_0)$ with $t_0>0$, then at $(t_0,z_0)$ we have $$0\leq\left(\frac{\partial }{\partial t}-\Delta_{\omega_t} \right)G< -t_0\frac{\partial F}{\partial s}\dot{\f}.$$
Since $\frac{\partial F}{\partial s}\geq 0$ by the hypothesis, we obtain $\dot{\f}(t_0,z_0)<0$ and 
$$t\dot{\f_t}-\f_t-nt+Bt^2/2\leq -\f_{t_0}(z_0)-nt_0+Bt_0^2/2.$$ Using Lemma \ref{bound from below} we get $\f_{t_0}\geq \f_0-C_1$, where $C_1$ only depends on $\sup_X\f_0$ and $T$, hence there is a constant $C_2$ depending on $\sup_X\f_0$ and $T$ such that
$$t\dot{\f_t}\leq \f_t-\f_0+C_2.$$
Since $\f_t\leq C_3(\sup \f_0,T)$,
so $$\dot{\f_t}(x)\leq \dfrac{-\f_0+C}{t},$$ 
where $C$ only depends on $\sup_X\f_0$ and $T$.
\end{proof} 
\subsection{Bounding the oscillation of $\f_t$}
Once we get an upper bound for $\dot{\f_t}$ as in Proposition \ref{bound f' above}, we can bound the oscillation of $\f_t$ by using the following uniform version of Kolodziej's estimates due to Dinew- Ko\l{}odziej \cite[Theorem 5.2]{DK12}. 
\begin{thm}\label{DK} Let $(X,\omega)$ be a compact Hermitian manifold. Assume $\f\in C^2(X)$ is such that $\omega+dd^c\f\geq 0$ and 
$$(\omega+dd^c\f)^n=f\omega^n.$$
Then for $p>1$, 
$$Osc_X\f\leq C,$$
where $C$ only depends on $\omega, p,||f||_{L^p(X)}$. 
\end{thm}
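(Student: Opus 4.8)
The plan is to reduce the oscillation estimate to the already-established uniform $L^\infty$ bound of Ko\l odziej--Dinew--Ko\l odziej (cf.\ \cite{DK12,Ng16}) by normalizing the Monge--Amp\`ere equation so that its right-hand side is controlled in $L^p$ for some $p>1$. First I would set $M:=\max_X \f$ and write $(\omega+dd^c\f)^n = f\,\omega^n$ with $f\geq 0$; the content of the theorem is that the constant $C$ may be chosen independently of $M$, i.e.\ depending only on $\omega$, $p$ and $\|f\|_{L^p(X)}$. The key point is that $\f-M$ is a nonpositive $\omega$-psh function solving $(\omega+dd^c(\f-M))^n = f\,\omega^n$, so without loss of generality we may assume $\sup_X\f=0$.

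Next I would invoke the Hermitian version of Ko\l odziej's a priori $L^\infty$ estimate: for a compact Hermitian manifold $(X,\omega)$, any $\f\in \mathrm{PSH}(X,\omega)\cap C^2(X)$ with $\sup_X\f=0$ and $(\omega+dd^c\f)^n=f\omega^n$ satisfies $\|\f\|_{L^\infty(X)}\leq C(\omega,p,\|f\|_{L^p(X)})$; this is exactly the statement appearing in \cite[Theorem 5.2]{DK12} (see also \cite[Theorem 2.1]{Ng16}), whose proof proceeds via a pluripotential-theoretic capacity argument adapting Ko\l odziej's original method to the non-K\"ahler setting, using the quasi-continuity of $\omega$-psh functions and an $L^\infty$ bound on solutions of an auxiliary Dirichlet problem on small balls. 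Combining the normalization $\sup_X\f=0$ with this bound gives $0\geq \inf_X\f\geq -C$, and therefore $\mathrm{Osc}_X\f = \sup_X\f-\inf_X\f \leq C$, with $C$ of the required form.

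The only subtlety worth flagging is that the cited uniform estimate is usually stated for the normalized solution (with either $\sup_X\f=0$ or $\int_X\f\,\omega^n=0$), so one must make explicit that the $C^2$ hypothesis together with the translation invariance $\f\mapsto\f-\sup_X\f$ of both the equation and the class $\mathrm{PSH}(X,\omega)$ reduces the general case to the normalized one; this is the step where care is needed to ensure the final constant does not secretly depend on $\sup_X\f$. No further work is required, since everything else is a direct citation.

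Thus the proof is essentially: normalize so that $\sup_X\f=0$, apply \cite[Theorem 5.2]{DK12} (Hermitian Ko\l odziej estimate) to bound $\|\f\|_{L^\infty}$, and conclude $\mathrm{Osc}_X\f\leq C(\omega,p,\|f\|_{L^p(X)})$. The main obstacle is not in this deduction at all but lies upstream, in the pluripotential theory underlying Theorem~\ref{DK}; here we simply quote it.
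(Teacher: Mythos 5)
Your proposal is correct and matches the paper's treatment: the paper does not prove this statement but quotes it directly as the uniform (Hermitian) version of Ko\l odziej's estimate due to Dinew--Ko\l odziej \cite[Theorem 5.2]{DK12} (see also \cite[Theorem 2.1]{Ng16}). Your only addition is the routine normalization $\f\mapsto \f-\sup_X\f$, which is harmless since constants are killed by $dd^c$ and the oscillation is translation invariant, so the argument is essentially the same citation the paper relies on.
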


Indeed, observe that $\f_t$ satisfies 
$$(\theta_t+dd^c\f_t)^n=H_t\Omega,$$
then by Proposition \ref{bound f' above}, for any $\e\in (0,T)$,
$$H_t=\exp(\dot{\f_t}+F)\leq \exp(\frac{-\f_0+C}{t}+C')$$
 for all $t\in [\e,T]$. 
Fix $p>1$ and $\mathcal F$ a compact family of $\omega$-psh functions with zero Lelong numbers, and assume that $\f_0\in \mathcal F$. It follows from the uniform version of Skoda's integrability theorem (cf. \cite[Proposition 7.1]{Sko} and \cite[Theorem 3.1]{Zer01}) that there exists $C_\e>0$ such that 
$$||e^{-\psi/t} ||_{L^p(\Omega)}\leq C_\e,$$ 
 for all $\psi\in \mathcal F, t\in [\e,T]$. We thus write for short $||H_t||_{L^p(\Omega)}\leq C(t)$ for some $C(t)>0$.
 
 \begin{rmk}
 Later on we will replace $\f_0$ by smooth approximants $\f_{0,j}$ of initial data. We can thus apply the previous estimate with $\mathcal{F}=\{\f_0\}\cup \{\f_{0,j}, j\in \N\}$, where $\f_0$  is now the initial data. This yields 
 $$||H_{t,j} ||_{L^{p}(\Omega)} \leq C(t).$$
 \end{rmk}

\medskip 
 Now, thanks to Theorem \ref{DK}, we infer that the oscillation of $\f_t$ is uniformly bounded:
\begin{thm}\label{bound oscillation}
Fix $0<t\leq T$. There exist $C(t)>0$ independent of $\inf_X\f_0$ such that
$$Osc_X(\f_t)\leq C(t).$$
\end{thm}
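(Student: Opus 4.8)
The plan is to combine Proposition~\ref{bound f' above} with the uniform Skoda estimate and Theorem~\ref{DK}. First I would recall that $\f_t$ satisfies a Monge-Amp\`ere equation. Writing $\omega_t=\theta_t+dd^c\f_t$ and using $\tfrac12\omega\le\theta_t\le 2\omega$ together with $\delta^{-1}\Omega\le\theta_t^n\le\delta\Omega$, we have $(\theta_t+dd^c\f_t)^n=H_t\Omega=e^{\dot\f_t+F(t,z,\f_t)}\Omega$, and since $\omega^n$ is comparable to $\Omega$ we may equivalently write $(\theta_t+dd^c\f_t)^n=\tilde H_t\,\omega^n$ with $\tilde H_t$ comparable to $H_t$. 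The point is that Theorem~\ref{DK} as stated is for a $\omega$-psh function, but $\f_t$ is only $\theta_t$-psh; however $\theta_t\le 2\omega$ gives $2\omega+dd^c\f_t\ge\theta_t+dd^c\f_t\ge 0$, so $\f_t$ is $2\omega$-psh, and Theorem~\ref{DK} applies with $\omega$ replaced by $2\omega$, the constant then depending on $\omega$ (and $T$) only.

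Next I would bound $\|\tilde H_t\|_{L^p}$. Fix $p>1$ and $0<t\le T$. By Proposition~\ref{bound f' above}, $\dot\f_t(z)\le\frac{-\f_0(z)+C}{t}$ with $C=C(\sup_X\f_0,T)$; and $F(t,z,\f_t)$ is bounded above on $[0,T]\times X\times\R$ once $\f_t$ is bounded above (which it is, by the maximum-principle upper bound, independently of $\inf_X\f_0$). Hence $H_t\le\exp\!\big(\tfrac{-\f_0+C}{t}+C'\big)$ pointwise, so that $\|H_t\|_{L^p(\Omega)}^p\le e^{p(C/t+C')}\int_X e^{-p\f_0/t}\,\Omega$. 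Now apply the uniform Skoda integrability theorem (\cite[Proposition 7.1]{Sko}, \cite[Theorem 3.1]{Zer01}): since $\f_0$ has zero Lelong number at every point, $e^{-p\f_0/t}$ is integrable for every $t>0$, and taking $\mathcal F=\{\f_0\}$ (a compact family, invoked here so the same argument will later apply to the approximants $\f_{0,j}$) we get $\|e^{-\f_0/t}\|_{L^{p}(\Omega)}\le C_t$ for some $C_t$ depending only on $\mathcal F,p,t$. Therefore $\|H_t\|_{L^p(\Omega)}\le C(t)$, and likewise $\|\tilde H_t\|_{L^p(\omega^n)}\le C(t)$, with $C(t)$ independent of $\inf_X\f_0$.

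Finally, feeding this into Theorem~\ref{DK} (with $2\omega$ in place of $\omega$) yields $Osc_X(\f_t)\le C(t)$, where $C(t)$ depends on $\omega$, $p$, $T$, and the family $\mathcal F$, but not on $\inf_X\f_0$. That is precisely the assertion of Theorem~\ref{bound oscillation}.

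The main subtlety — really the only one — is making sure every constant is genuinely independent of $\inf_X\f_0$: the upper bound for $\f_t$ must come from the maximum principle (which does not see $\inf_X\f_0$), the upper bound for $\dot\f_t$ from Proposition~\ref{bound f' above} (whose constant depends only on $\sup_X\f_0$ and $T$), and the $L^p$ control of $e^{-p\f_0/t}$ from the \emph{uniform} Skoda theorem applied to a fixed compact family $\mathcal F$ of $\omega$-psh functions with zero Lelong numbers, rather than to $\f_0$ alone. Once this bookkeeping is in place, and one notes that passing from $\theta_t$-psh to $2\omega$-psh only changes the reference form by a fixed constant factor, the estimate is immediate from Theorem~\ref{DK}.
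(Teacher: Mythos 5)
Your overall strategy is exactly the one the paper uses: the upper bound $\dot\f_t\le(-\f_0+C)/t$ from Proposition \ref{bound f' above} (with $C$ depending only on $\sup_X\f_0$ and $T$), the uniform Skoda--Zeriahi integrability theorem applied to a compact family $\mathcal F\ni\f_0$ of $\omega$-psh functions with zero Lelong numbers to obtain $\|H_t\|_{L^p(\Omega)}\le C(t)$, and then the Dinew--Ko\l{}odziej estimate, Theorem \ref{DK}, to convert the $L^p$ bound on the density into a bound on $Osc_X(\f_t)$. Your bookkeeping of the constants (everything enters through $\sup_X\f_0$ and $T$, never through $\inf_X\f_0$) is also the right point and matches the paper.

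The one step that does not work as written is your reduction from $\theta_t$ to $2\omega$. It is true that $\theta_t\le 2\omega$ makes $\f_t$ a $2\omega$-psh function, but Theorem \ref{DK} with background form $2\omega$ requires an $L^p$ bound on the density of the measure $(2\omega+dd^c\f_t)^n$, not of $(\theta_t+dd^c\f_t)^n$. These differ by the mixed terms $\binom{n}{k}(2\omega-\theta_t)^k\wedge(\theta_t+dd^c\f_t)^{n-k}$ with $k\ge1$, which involve $dd^c\f_t$ and are not controlled in $L^p$ by $\|H_t\|_{L^p}$; so the hypothesis of the theorem is not verified for the pair you feed into it, and the step would fail. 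The correct resolution --- and the way the paper implicitly reads Theorem \ref{DK} --- is to apply the uniform estimate directly to the equation $(\theta_t+dd^c\f_t)^n=H_t\Omega$ with background the Hermitian form $\theta_t$ itself: for fixed $t$ this is a Hermitian metric, and the estimates of \cite{DK12,Ng16} hold for an arbitrary Hermitian background with constants depending only on that metric, hence here only on the fixed bounds $\frac{\omega}{2}\le\theta_t\le2\omega$ and $\delta^{-1}\Omega\le\theta_t^n\le\delta\Omega$ (so uniformly in $t$ as well). With that substitution the rest of your argument goes through verbatim and gives the stated bound, independent of $\inf_X\f_0$.
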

\subsection{Bounding the gradient of $\f$}
In this section we bound the gradient of $\f$ using the same technique as in \cite{To16} (see also \cite{SzTo}) which is a parabolic version of B\l{}ocki's estimate \cite{Blo09} for K\"ahler manifolds.  In these articles  we used the usual normal coordinates in K\"ahler geometry. For Hermitian manifolds, we need to use the following local coordinate system due to  Guan-Li \cite[Lemma 2.1]{GL10} (see also \cite{StT11} for a similar argument), which is also essential for our second order estimate. We also refer the reader to \cite[Lemma 6]{Ha96} for a gradient estimate for the elliptic Complex Mong-Amp\`ere equation in the Hermitian case without using the local coordinate system. We thank Valentino Tosatti for indicating the reference \cite{Ha96}. We remark that similar arguments of the proof below can be found in \cite[Lemma 3.3]{Nie}.

\medskip
\begin{lem}\label{GL coordinates}
At any point $x\in X$ there exists a local holomorphic coordinate system centered at $x$ such that for all $i,j$
\begin{equation}\label{GL local coor}
g_{i\bar{j}}(0)=\delta_{i,j}, \quad \frac{\partial g_{i\bar{i}}}{\partial z_j}(0)=0.
\end{equation}
\end{lem}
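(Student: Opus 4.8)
The plan is to construct the desired coordinates by composing a linear change of variables with a quadratic one. Starting from an arbitrary holomorphic coordinate system $(w_1,\dots,w_n)$ centered at $x$, I first apply a $\C$-linear transformation so that $g_{i\bar j}(0)=\delta_{ij}$; this is just the statement that a positive-definite Hermitian form can be diagonalized to the identity, and it is always achievable over $\C$. So we may assume from the outset that $g_{i\bar j}(0)=\delta_{ij}$ in the $w$-coordinates, and it remains to kill the holomorphic first derivatives $\partial g_{i\bar i}/\partial w_j(0)$ along the diagonal $i=i$.

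For the second step I would look for new coordinates of the form
\begin{equation*}
z_k = w_k + \tfrac{1}{2}\sum_{p,q} a^k_{pq}\, w_p w_q,
\end{equation*}
with $a^k_{pq}=a^k_{qp}$ constants to be determined. Since the correction is quadratic and holomorphic, one has $\partial z_k/\partial w_j(0)=\delta_{kj}$, so $g_{i\bar j}(0)=\delta_{ij}$ is preserved. Writing $\tilde g_{i\bar j}$ for the metric components in the $z$-coordinates and differentiating the transformation rule $g_{p\bar q} = \tilde g_{i\bar j}\,\frac{\partial z_i}{\partial w_p}\overline{\frac{\partial z_j}{\partial w_q}}$ at $0$, one finds the relation
\begin{equation*}
\frac{\partial g_{i\bar i}}{\partial w_j}(0) = \frac{\partial \tilde g_{i\bar i}}{\partial z_j}(0) + a^i_{ij},
\end{equation*}
because $\partial^2 z_i/\partial w_i\partial w_j(0) = a^i_{ij}$ and the other terms vanish at $0$. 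Hence choosing $a^i_{ij} := -\,\partial g_{i\bar i}/\partial w_j(0)$ (and, say, $a^k_{pq}=0$ for all other index patterns, which is consistent with the symmetry requirement $a^k_{pq}=a^k_{qp}$ since we only prescribe entries with one index equal to the superscript) yields $\partial \tilde g_{i\bar i}/\partial z_j(0)=0$ for all $i,j$, as desired. One should double-check that the resulting map $w\mapsto z$ is a biholomorphism near $0$: its differential at $0$ is the identity, so this is immediate from the inverse function theorem.

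There is really no serious obstacle here; the only point that requires a moment's care is bookkeeping in the second step — making sure the symmetrization convention on $a^k_{pq}$ is compatible with prescribing $a^i_{ij}$, and that no additional unwanted first-derivative terms are introduced. Note that one does \emph{not} get the full Kähler-type normalization $\partial g_{i\bar j}/\partial z_k(0)=0$ for all $i,j,k$ — that would require $a^i_{jk}$ with three distinct slots satisfying an overdetermined symmetry, which fails exactly when the torsion is nonzero — but the diagonal conditions \eqref{GL local coor} are all that is claimed and all that the later gradient and Laplacian estimates use. I would close by remarking that this is the Hermitian substitute for normal coordinates, sufficient precisely because at the maximum point of the test function the off-diagonal metric terms can be arranged to be diagonal as well, so only the diagonal derivatives enter the computation.
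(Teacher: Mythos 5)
Your argument is correct in substance and is exactly the standard proof of this fact: the paper itself gives no proof but cites Guan--Li \cite[Lemma 2.1]{GL10}, whose argument is the same linear-plus-quadratic holomorphic change of coordinates you describe, and your remark that only the diagonal derivatives $\partial g_{i\bar i}/\partial z_j$ can be normalized (the full K\"ahler-type normalization being obstructed by the torsion) is the right point. One small inconsistency to fix: given the relation you derive, $\frac{\partial g_{i\bar i}}{\partial w_j}(0)=\frac{\partial \tilde g_{i\bar i}}{\partial z_j}(0)+a^i_{ij}$, cancelling the new derivatives requires $a^i_{ij}=+\frac{\partial g_{i\bar i}}{\partial w_j}(0)$ (equivalently, write the correction with a minus sign, $z_k=w_k-\tfrac12\sum_{p,q}a^k_{pq}w_pw_q$); your stated choice $a^i_{ij}=-\frac{\partial g_{i\bar i}}{\partial w_j}(0)$ would double these derivatives rather than kill them.
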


\medskip
We now prove
\begin{prop}\label{bound grad}
Fix $\e\in [0,T]$. There exists $C>0$ depending on $\sup_X \f_0$ and $\e$ such that for all $\e< t\leq T$
$$|\nabla \f(z)|^2_\omega< e^{C/(t-\e)}.$$
\end{prop}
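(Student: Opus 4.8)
The plan is to run a parabolic maximum principle argument on a well-chosen auxiliary quantity, following the strategy of Błocki's gradient estimate adapted to the parabolic Hermitian setting. Concretely, I would consider a function of the form
\[
G(t,z)=(t-\e)\log|\nabla\f|^2_\omega-\gamma(\f)+\lambda(t-\e),
\]
or a slight variant, where $\gamma$ is a carefully selected function of $\f$ (for instance $\gamma(s)=-As+Bs^2$ or $\gamma(s)=e^{-\kappa s}$-type) chosen so that the bad terms coming from differentiating $\f$ against the torsion of $\omega$ can be absorbed. The factor $(t-\e)$ is there precisely to produce the claimed blow-up rate $e^{C/(t-\e)}$ near $t=\e$: at $t=\e$ the quantity $G$ is $-\gamma(\f)$, which is bounded by Theorem~\ref{bound oscillation} and Lemma~\ref{bound from below}, so any interior maximum must be controlled in terms of the initial-slice bound plus the contribution of $(t-\e)$ times the log-gradient, and solving the resulting inequality for $|\nabla\f|^2$ gives the exponential rate.

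The key computational step is to apply the heat operator $\left(\frac{\partial}{\partial t}-\Delta_{\omega_t}\right)$ to $G$. Working in the Guan–Li coordinates of Lemma~\ref{GL coordinates} at the maximum point, the first-derivative conditions $\partial g_{i\bar i}/\partial z_j=0$ at the point kill exactly the torsion terms that would otherwise obstruct the estimate in the non-Kähler case; this is the whole reason that coordinate system is introduced. One computes $\Delta_{\omega_t}|\nabla\f|^2_\omega$ using the Bochner-type formula, picking up a good (positive) term $|\nabla\nabla\f|^2+|\nabla\bar\nabla\f|^2$ and various curvature, torsion, and $F$-dependent error terms; one then uses $\frac{\partial}{\partial t}|\nabla\f|^2$ together with the flow equation $\dot\f=\log\frac{(\theta_t+dd^c\f)^n}{\Omega}-F$, differentiated once in space. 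The function $\gamma(\f)$ is tuned so that $-\Delta_{\omega_t}(-\gamma(\f))=\gamma'(\f)\Delta_{\omega_t}\f+\gamma''(\f)|\nabla\f|^2_{\omega_t}$ contributes a term $\gamma''|\nabla\f|^2_{\omega_t}$ of the correct sign/size to dominate the first-order error terms, while $\gamma'\Delta_{\omega_t}\f=\gamma'(\tr_{\omega_t}\omega - n + \dots)$ is controlled because $\tr_{\omega_t}\omega\geq 0$ and by the already-established lower bound $\dot\f\geq n\log t-C$ from Proposition~\ref{bound f'} (which gives a lower bound on $\omega_t^n/\Omega$, hence on $\tr_{\omega_t}\omega$ up to the usual elementary-symmetric-function inequality).

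At the maximum point $(t_0,z_0)$ with $t_0>\e$ one has $\partial G=0$ (so $\partial|\nabla\f|^2$ is expressed in terms of $\gamma'\partial\f$, which lets one rewrite the cross terms), $\frac{\partial G}{\partial t}\geq 0$, and $\Delta_{\omega_t}G\leq 0$; feeding these into the differential inequality and choosing $\lambda$ (and the constants in $\gamma$) large depending only on $\sup_X\f_0$, the geometry of $\omega$, and bounds on $F$ and its derivatives, forces $|\nabla\f|^2_\omega(t_0,z_0)$ to be bounded by $e^{C/(t_0-\e)}$; then $G(t,z)\leq G(t_0,z_0)\leq C/(t_0-\e)+C$ globally, and unwinding the definition of $G$ yields the stated bound for all $\e<t\leq T$. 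The main obstacle I anticipate is bookkeeping the torsion and $\nabla T$ terms: in the Hermitian case the Bochner formula and the time derivative of $|\nabla\f|^2_\omega$ both generate terms linear in $|\nabla\f|$ (not quadratic), and one must check that these are genuinely absorbed by the good second-order term $|\nabla\bar\nabla\f|^2$ via Cauchy–Schwarz together with the $\gamma''|\nabla\f|^2_{\omega_t}$ term, uniformly as $\f$ ranges over the relevant compact family — exactly the point where the Guan–Li coordinates and the choice of $\gamma$ have to be coordinated with care, and this is where I would expect to essentially reproduce the computation of \cite[Lemma 3.3]{Nie} or the parabolic argument of \cite{To16,SzTo}.
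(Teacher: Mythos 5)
Your proposal follows essentially the same route as the paper: after the time shift (the paper restarts the flow at $\f_\e$ and works with $K=t\log|\nabla\f|^2_\omega-\gamma\circ\f$, which is your $(t-\e)$-weighted quantity), the proof is exactly the Guan--Li-coordinate maximum-principle computation with a quadratic $\gamma$ tuned to absorb the torsion and first-order terms, using the differentiated flow equation and the bounds on $\dot\f$ from Propositions~\ref{bound f'} and~\ref{bound f' above}. The only correction to your final step is that the maximum-principle inequality, combined with the volume-ratio bound $\prod_p u_{p\bar p}\leq C$ coming from the upper bound on $\dot\f$, gives a \emph{uniform} bound on $\log|\nabla\f|^2_\omega$ at the maximum point (hence $K\leq C$ everywhere), not a bound of the form $C/(t_0-\e)$ there; unwinding $K\leq C$ is what yields $|\nabla\f|^2_\omega\leq e^{C/(t-\e)}$ for all $\e<t\leq T$.
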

\begin{proof}
Since the bound on $Osc_X\f_\e$ only depends on $\sup_X\f_0$ and $\e$ (see Theorem \ref{bound oscillation}),  we can consider the flow starting from $\f_\e$, i.e $\f(0,x)=\f_\e$. Then we need to show that there exists a constant $C$ depending on $Osc_X\f_0$ and $\e$ such that 
$$|\nabla \f(z)|^2_\omega< e^{C/t},$$
for all $t\in [0,T-\e]$.

\medskip
Define 
$$ K(t,x)=t\log|\nabla \f|^2_\omega -\gamma\circ \f=t\log \beta -\gamma\circ \f,$$
 for $(t,x)\in [0,T-\e]\times X$ where, $\beta=|\nabla\f|^2_\omega$ and $\gamma\in C^\infty(\R,\R)$ will be chosen hereafter.

\medskip
If $K(t,z)$ attains its maximum for $t=0$, $\beta$ is bounded in terms of $\sup_X\f_0$ and $\e$, since $|\f_t|$ is bounded by a constant depending on $\sup_X\f_0$ and $\e$ for all $t\in[0,T-\e]$ (see Section \ref{C^0 estimates}).
\medskip

We now assume that $K(t,z)$ attains its maximum at $(t_0,z_0)$ in $[0,T-\e]\times X$ with $t_0>0$. Near $z_0$ we have $\omega=\sqrt{-1} g_{i\bar{j}}dz_i\wedge d\bar{z}_j$ for some and  $\theta_t=\sqrt{-1} h_{i\bar{j}}dz_i\wedge d\bar{z}_j$. We take the local coordinates (\ref{GL local coor}) for $\omega$ at $z_0$ such that 
\begin{align}\label{1}
&g_{i\bar{k}}(z_0)=\delta_{jk}\\  \label{2}
&g_{i\bar{i}l}(z_0)=0\\ \label{3}
&u_{p\bar{q}}(t_0,z_0)=h_{p\bar{q}}+\f_{p\bar{q}}\ \text{is diagonal},
\end{align}
here for convenience we denote in local coordinate,  $u_p:=\frac{\partial  u}{\partial z_p}, u_{j\bar{k}}:=\frac{\partial^2 u}{\partial z_j\partial \bar{z}_k}$,  and $g_{i\bar{j}k}:=\frac{\partial  g_{i\bar{j}}}{\partial z_k}$.

\medskip We now compute $K_p,K_{p\bar{p}}$ at $(t_0,z_0)$ in order to use the maximum principle.  At $(t_0,z_0)$ we have $K_p=0$ hence
\begin{equation}\label{4}
t\beta_p=\beta \gamma'\circ \f\f_p
\end{equation}  
or  
$$\left(\frac{\beta_p}{\beta}\right)^2=\frac{1}{t^2}(\gamma'\circ \f)^2|\f_p|^2.  $$
Therefore,
\begin{align*}
K_{p\bar{p}}&= t\frac{\beta_{p\bar{p}}\beta-|\beta_p|^2}{\beta^2}-\gamma''\circ \f |\f_p|^2-\gamma'\circ \f\f_{p\bar{p}}\\
&=t\frac{\beta_{p\bar{p}}}{\beta}-[t^{-1}(\gamma'\circ \f)^2+\gamma''\circ \f]|\f_p|^2-\gamma'\circ \f\f_{p\bar{p}}.
\end{align*}

\medskip
Now we compute $\beta_p, \beta_{p\bar{p}}$ at $(t_0,z_0)$ with $\beta=g^{j\bar{k}}\f_j\f_{\bar{k}}$ where $(g^{j\bar{k}})=[(g_{j\bar{k}})^t]^{-1}$. We have
$$\beta_p=g^{j\bar{k}}_p\f_j\f_{\bar{k}}+g^{j\bar{k}}\f_{jp}\f_{\bar{k}}+g^{j\bar{k}}\f_{j}\f_{\bar{k}p}.$$
Since
$$g^{j\bar{k}}_p=-g^{j\bar{l}}g_{s\bar{l}p}g^{s\bar{k}},$$

$$\beta_p=- g^{j\bar{l}}g_{s\bar{l}p}g^{s\bar{k}} \f_j\f_{\bar{k}}+ g^{j\bar{k}}\f_{jp}\f_{\bar{k}}+g^{j\bar{k}}\f_{j}\f_{\bar{k}p}$$
and 
\begin{eqnarray*}
\beta_{p\bar{p}}&=&- g_{\bar{p}}^{j\bar{l}}g_{s\bar{l}p}g^{s\bar{k}} \f_j\f_{\bar{k}}- g^{j\bar{l}}g_{s\bar{l}p\bar{p}}g^{s\bar{k}} \f_j\f_{\bar{k}}- g^{j\bar{l}}g_{s\bar{l}p}g_{\bar{p}}^{s\bar{k}} \f_j\f_{\bar{k}}\\
&&- g^{j\bar{l}}g_{s\bar{l}p}g^{s\bar{k}} \f_{j\bar{p}}\f_{\bar{k}} - g^{j\bar{l}}g_{s\bar{l}p}g^{s\bar{k}} \f_j\f_{\bar{k}\bar{p}}  +g^{j\bar{k}}_{\bar{p}}\f_{jp}\f_{\bar{k}} +g^{j\bar{k}}\f_{jp\bar{p}}\f_{\bar{k}}\\
&&+g^{j\bar{k}}\f_{jp}\f_{\bar{k}\bar{p}}+g^{j\bar{k}}_{\bar{p}}\f_{j}\f_{\bar{k}p}+g^{j\bar{k}}\f_{j\bar{p}}\f_{\bar{k}p}+g^{j\bar{k}}\f_{j}\f_{\bar{k}p\bar{p}}.
\end{eqnarray*}
Therefore, at $(t_0,x_0)$,  $$g^{j\bar{k}}_p=-g_{k\bar{j}p},$$
\begin{equation}\label{bp}
\beta_p=- g^{j\bar{l}}g_{s\bar{l}p}g^{s\bar{k}} \f_j\f_{\bar{k}}+ \sum_j\f_{jp}\f_{\bar{j}}+\sum_j \f_{\bar{j}p}\f_j,
\end{equation}
and
\begin{eqnarray*}
\beta_{p\bar{p}}= -g_{k\bar{j}p\bar{p}}\f_j\f_{\bar{k}}+2Re(\f_{jp\bar{p}}\f_{\bar{j}})+ |\f_{jp}-\sum_k g_{j\bar{k}p}\f_k  |^2+| \f_{j\bar{p}} -\sum_kg_{\bar{k} j\bar{p}}\f_k|^2
\end{eqnarray*}
Now at $(t_0,z_0)$
$$
\Delta_{\omega_{t_{0}}}K=\sum_{p=1}^n \frac{K_{p\bar{p}}}{u_{p\bar{p}}}=\frac{t}{\beta}\frac{\beta_{p\bar{p}}}{u_{p\bar{p}}}-\frac{[t^{-1}(\gamma'\circ \f)^2+\gamma''\circ \f]|\f_p|^2}{u_{p\bar{p}}}-\frac{\gamma'\circ \f\f_{p\bar{p}}}{u_{p\bar{p}}}.$$

Since $u_{p\bar{p}}=\f_{p\bar{p}}+h_{p\bar{p}}$ near $(t_0,z_0)$, then at $(t_0,z_0)$
\begin{eqnarray*}
\sum_p \frac{\gamma'\circ \f\f_{p\bar{p}}}{u_{p\bar{p}}}&=&n\gamma'\circ \f-\sum_p \frac{\gamma'\circ \f h_{p\bar{p}}}{u_{p\bar{p}}}\\
 &\leq& n\gamma'\circ \f-\lambda\sum_p \frac{\gamma'\circ \f }{u_{p\bar{p}}} , 
\end{eqnarray*}
 with $\lambda\leq h_{p\bar{p}}$, on $[0,T]\times X$ for all $ p=1,\ldots,n$.

\medskip
Moreover, assume that the holomorphic bisectional curvature of $\omega$ is bounded from below by a constant $-B\in \R$ on X, then at $(t_0,z_0)$
$$t\sum_{j,k,p}\frac{-g_{j\bar{k}p\bar{p}}\f_j\f_{\bar{k}}}{\beta u_{p\bar{p}}}=t\sum_{j,k,p}\frac{R_{j\bar{k}p\bar{p}}\f_j\f_{\bar{k}}}{\beta u_{p\bar{p}}}\geq -Bt\sum_{p} \frac{1}{u_{p\bar{p}}},$$
therefore
\begin{eqnarray*}
\Delta_{\omega_{t_{0}}}K&\geq &(\lambda\gamma'\circ \f-tB)\sum_p \frac{1}{u_{p\bar{p}}}+2tRe\sum_{j,p} \frac{\f_{p\bar{p}j}\f_{\bar{j}}}{\beta u_{p\bar{p}}}\\
&&+\frac{t}{\beta}\sum_{j,p}\frac{ |\f_{jp}-\sum_k g_{k\bar{j}\bar{p}}\f_k  |^2+| \f_{j\bar{p}} -\sum_kg_{k\bar{j}p}\f_k|^2}{\beta u_{p\bar{p}}}\\
&& - [t^{-1}(\gamma'\circ \f)^2+
\gamma''\circ \f]\sum_p\frac{|\f_p|^2}{u_{p\bar{p}}}-n\gamma'\circ \f.
\end{eqnarray*}
By the maximum principle, at $(t_0,z_0)$
\begin{align*}
0\leq \left(\frac{\partial}{\partial t}-\Delta_{\omega_t}\right)K
\end{align*}
hence,
\begin{eqnarray}\label{ineq 1}
0&\leq &\log \beta-\gamma'\circ \f\dot{\f}-(\lambda\gamma'\circ \f-tB)\sum_p \frac{1}{u_{p\bar{p}}}+t\frac{\beta'}{\beta}-2tRe\sum_{j,p} \frac{\f_{p\bar{p}j}\f_{\bar{j}}}{\beta u_{p\bar{p}}}\nonumber\\
&&-\frac{t}{\beta}\sum_{j,p}\frac{ |\f_{jp}-\sum_k g_{k\bar{j}\bar{p}}\f_k  |^2+| \f_{j\bar{p}} -\sum_kg_{k\bar{j}p}\f_k|^2}{\beta u_{p\bar{p}}}\\
&& + [t^{-1}(\gamma'\circ \f)^2+
\gamma''\circ \f]\sum_p\frac{|\f_p|^2}{u_{p\bar{p}}}+n\gamma'\circ \f. \nonumber
\end{eqnarray}
We will simplify (\ref{ineq 1}) to get a  bound for $\beta$ at $(t_0,z_0)$. 

\medskip
\noindent{\bf Claim 1.} {\it  There exist $C_1>0$ depending on $\sup |\f_0|$ and $C_2,C_3, C_4>0$ only depending on $h$ such that
\begin{equation*}
t\frac{\beta'}{\beta} -2tRe\sum_{j,p} \frac{u_{p\bar{p}j}\f_{\bar{j}}}{\beta u_{p\bar{p}}}<C_1t+ C_2 t\sum_p\frac{1}{u_{p\bar{p}}},
\end{equation*} and
$$-\frac{t}{\beta}\sum_{j,p}\frac{ |\f_{jp}-\sum_k g_{k\bar{j}\bar{p}}\f_k|^2}{\beta u_{p\bar{p}}} + t^{-1}(\gamma'\circ \f)^2\sum_p\frac{|\f_p|^2}{u_{p\bar{p}}}\leq C_3\gamma'\circ \f + C_4\sum_p\frac{|\f_p|^2}{u_{p\bar{p}}}.$$}

\begin{proof}[Proof of Claim 1]
For the first one, we note that near $(t_0,z_0)$
$$\log\det(u_{p\bar{q}})=\dot{
\f}+F(t,z,\f)+ \log\Omega,$$
hence using 
$$\frac{d}{ds}\det A=A^{\bar{j}i}\left( \frac{d}{ds}A_{i\bar{j}} \right) \det A$$
we have at $(t_0,z_0)$
$$u^{p\bar{p}}u_{p\bar{p}j}=\frac{u_{p\bar{p}j}}{u_{p\bar{p}}}=(\dot{\f}+F(t,z,\f) +\log \Omega )_j.$$
Therefore
\begin{eqnarray*}
2tRe\sum_{j,p} \frac{\f_{p\bar{p}j}\f_{\bar{j}}}{\beta u_{p\bar{p}}}&=&2tRe\sum_{j,p} \frac{(u_{p\bar{p}j}-h_{p\bar{p}j})\f_{\bar{j}}}{\beta u_{p\bar{p}}}\\
&=&\frac{2t}{\beta}Re\sum_j \large( \dot{\f}+F(t,z,\f) +\log\Omega\large)_j\f_{\bar{j}}-2tRe\sum_{j,p} \frac{h_{p\bar{p}j}\f_{\bar{j}}}{\beta u_{p\bar{p}}} \\
&=& \frac{2t}{\beta} Re\sum_{j} (\dot{\f}_j\f_{\bar{j}})+\frac{2t}{\beta}Re\left( (F(t,z,\f)+\log\Omega)_j+\frac{\partial F}{\partial s}\f_j \right)\f_{\bar{j}}\\
&&-2tRe\sum_{j,p} \frac{h_{p\bar{p}j}\f_{\bar{j}}}{\beta u_{p\bar{p}}}.
\end{eqnarray*}
In addition, at $(t_0,z_0)$
\begin{align*}
t\frac{\beta'}{\beta}&=\frac{t}{\beta} \sum_{j,k} g^{j\bar{k}}(\dot{\f}_j\f_{\bar{k}} +\f_j\dot{\f}_{\bar{k}}) \\
&= \frac{2t}{\beta}Re(\dot{\f}_j\f_{\bar{j}}),
\end{align*}
we infer that
\begin{eqnarray*}
t\frac{\beta'}{\beta} -2tRe\sum_{j,p} \frac{u_{p\bar{p}j}\f_{\bar{j}}}{\beta u_{p\bar{p}}}&=&- \frac{2t}{\beta}Re\sum_j\bigg( F_j(t,z,\f) +(\log\Omega)_j\bigg)\f_{\bar{j}}-\frac{2t}{\beta}\sum_j \frac{\partial F}{\partial s}|\f_j|^2\\
&&+2tRe\sum_{j,p} \frac{h_{p\bar{p}j}\f_{\bar{j}}}{\beta u_{p\bar{p}}}.
\end{eqnarray*}
We may assume that $\log\beta>1$ so that 
$$\frac{|\f_{\bar{j}}|}{\beta}<C$$
Since $\frac{\partial F}{\partial s}\geq 0$, there exist $C_1>0$ depending on $\sup |\f_0|$ and $C_2$ depending on $h$  such that
\begin{equation}\label{5}
t\frac{\beta'}{\beta} -2tRe\sum_{j,p} \frac{u_{p\bar{p}j}\f_{\bar{j}}}{\beta u_{p\bar{p}}}<C_1t+ C_2 t\sum_p\frac{1}{u_{p\bar{p}}}.
\end{equation}
\medskip
We now estimate
$$-\frac{t}{\beta}\sum_{j,p}\frac{ |\f_{jp}-\sum_k g_{k\bar{j}\bar{p}}\f_k  |^2}{\beta u_{p\bar{p}}} + t^{-1}(\gamma'\circ \f)^2\sum_p\frac{|\f_p|^2}{u_{p\bar{p}}}.$$
It follows from (\ref{4}) and (\ref{bp}) that
\begin{align*}
 &\beta_p=-g_{k\bar{j}\bar{p}}\f_j\f_{\bar{k}}+g^{j\bar{k}}\f_{jp}\f_{\bar{k}}+g^{j\bar{k}}\f_{j}\f_{\bar{k}p}.\\
 & t\beta_p=\beta \gamma'\circ \f\f_p
\end{align*}
then,
$$ \sum_{j} (\f_{jp}-\sum_{k}g_{k\bar{j}\bar{p} }\f_k )\f_{\bar{j}}= t^{-1}\gamma'\circ \f\beta\f_p-\sum_j \f_{j\bar{p}}\f_{\bar{j}}=t^{-1}\gamma'\circ \f\beta\f_p-u_{p\bar{p}}+\sum
_{j}h_{j\bar{p}}\f_{\bar{j}}$$
Hence at $(t_0,z_0)$, using $\log \beta>1$, we have
\begin{align*}
\frac{t}{\beta}\sum_{j,p}\frac{ |\f_{jp}-\sum_k g_{k\bar{j}\bar{p}}\f_k  |^2}{\beta u_{p\bar{p}}}&\geq \frac{t}{\beta^2}\sum_{p}\frac{|\sum_{j} (\f_{jp}-\sum_{k}g_{k\bar{j}\bar{p} }\f_k )\f_{\bar{j}}|^2}{u_{p\bar{p}}}\\
&=\frac{t}{\beta^2}\sum_p \frac{|t^{-1}\gamma'\circ \f\beta\f_p-u_{p\bar{p}}+\sum
_{j}h_{j\bar{p}}\f_{\bar{j}}|^2}{u_{p\bar{p}}}\\ 
&\geq t^{-1}(\gamma'\circ \f)^2\sum_p \frac{|\f_p|^2}{u_{p\bar{p}}}+ \frac{1}{\beta}\gamma' \circ \f2Re\sum_{i,p}\frac{h_{j\bar{p}}\f_{\bar{j}}\f_{\bar{p}}}{u_{p\bar{p}}} -C_3\gamma'\circ\f,\\
&\geq t^{-1}(\gamma'\circ \f)^2\sum_p \frac{|\f_p|^2}{u_{p\bar{p}}} -C_3\gamma'\circ\f -C_4\sum_p\frac{|\f_p|^2}{u_{p\bar{p}}},
\end{align*}
here $C_3$ and $C_4$ only depend on $h$. This completes Claim 1.
\end{proof}

\medskip
We now choose 
$$\gamma(s)=As-\left(\frac{1}{A}+C_4\right)s^2$$
with $A$ so large that $\gamma'=A-2\left(\frac{1}{A}+C_4\right)s>0$ and $\gamma''=-2/A-2C_4<0$ for all $s\leq\sup_{[0,T]\times X} \f_t $. From Lemma \ref{bound f'} we have $\dot{\f}\geq C_0+n\log t$, where $C_0$ depends on $Ocs_X \f_0$. Combining this with (\ref{ineq 1}) and Claim 1, we obtain
\begin{align}\label{main inequality}
0\leq -\frac{2}{A}\sum_p\frac{|\f_p|^2}{u_{p\bar{p}}}-(\lambda \gamma'\circ \f-Bt-C_2t)\sum_p \frac{1}{u_{p\bar{p}}}+ \log \beta+ C'\gamma'\circ \f+C_1t,
\end{align}
where $C_1,C_2,C'$ depend on $\sup_X|\f_0|$ and $h$.
If A is chosen sufficiently large, we have a constant $C_5>0$ such that 
\begin{equation}\label{ineq 2}
\sum_p\frac{1}{u_{p\bar{p}}} +\sum_p \frac{|\f_p|^2}{u_{p\bar{p}}}\leq C_5\log \beta,
\end{equation}
since otherwise (\ref{main inequality}) implies that $\beta$ is  bounded. So we get
$(u_{p\bar{p}})^{-1}\leq C_5\log\beta$ for $1\leq p\leq n$. It follows  from Lemma \ref{bound f' above} we have at $(t_0,z_0)$ $$ \prod_p u_{p\bar{p}}=e^{-\dot{\f}_t+F(t,x,\f_t)}\leq C_6, $$
where $C_6$ depends on $\sup_X|\f_0|,\e$.  Then we get
$$u_{p\bar{p}}\leq C_6(C_5\log \beta)^{n-1},$$
so from (\ref{ineq 2}) we have
$$\beta=\sum_p |\f_p|^2\leq C_6(C_5\log \beta)^n,$$
hence $\log \beta< C_7$ at $(t_0,z_0)$. This shows that $\beta=|\nabla\f(z)|^2_\omega<e^{C/t}$ for some $C$ depending on $\sup|\f_0|$ and $\e$.
\end{proof}

\subsection{Bounding $\Delta \f_t $} 
We now estimate $\Delta \f$. The estimate on $|\nabla \f|^2_{\omega}$ is needed here. The argument follows from \cite{GZ13,To16} but there are difficulties in using this approach because of torsion terms that need to be controlled (see also \cite{TW10} for similar computation for the elliptic Monge-Amp\`ere equation).
\begin{lem}\label{bound delta}
Fix $\e>0$. There exist constants $A$ and $C$ only depending on $\e$ and $\sup_X\f_0$ such that for all $0\leq t\leq T-\e$,
$$0\leq t\log\tr_\omega(\omega_{t+\e})\leq AOsc_X(\f_\e)+C+[C-n\log \e+ AOsc_X(\f_\e)]t.$$
\end{lem}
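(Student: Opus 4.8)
\textbf{Proof plan for Lemma \ref{bound delta}.}

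The plan is to run a parabolic Aubin--Yau second-order estimate on the quantity
$$ H(t,z) = t\log \tr_\omega(\omega_t) - \gamma\circ\f + \varphi, $$
or a variant thereof, where $\gamma$ is an auxiliary convex function of $\f$ to be fixed at the end and $\f$ is the solution of $(CMAF)$ normalized so that the flow starts from $\f_\e$ (this is legitimate since, by Theorem \ref{bound oscillation}, $Osc_X\f_\e$ is controlled by $\sup_X\f_0$ and $\e$, and it explains the shift by $\e$ in the statement). The factor $t$ in front of $\log\tr_\omega(\omega_t)$ is what produces the $t\log t$-type blow-up $-n\log\e\cdot t$ on the right-hand side and allows the estimate to hold with a rough initial datum; the term $-\gamma\circ\f$ (or equivalently $+C\f$, up to sign conventions) is the standard device to absorb the bad zero-order contributions, and it forces the appearance of $AOsc_X(\f_\e)$.

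The key computational step is the evolution inequality for $u:=\log\tr_\omega(\omega_t)$. Applying the heat operator $\partial_t-\Delta_{\omega_t}$ and using the standard Aubin--Yau--Siu-type computation, one gets
$$ \Big(\frac{\partial}{\partial t}-\Delta_{\omega_t}\Big)\log\tr_\omega(\omega_t) \;\le\; \frac{C\,\tr_{\omega_t}\omega}{\tr_\omega\omega_t} + C, $$
where the first term comes from the lower bound $-B$ on the bisectional curvature of $\omega$ together with the torsion terms of the Hermitian metric, and where I will need the local coordinate system of Lemma \ref{GL coordinates} exactly as in the gradient estimate to make the first-derivative-of-metric terms vanish at the maximum point. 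The main obstacle — and the place where this differs from the K\"ahler case — is controlling the torsion terms $T$, $\bar T$ and their derivatives that appear when one commutes covariant derivatives and differentiates $\log\det(g_{i\bar j}+\f_{i\bar j})$; these generate terms of the shape $\tr_{\omega_t}(\text{torsion}*\nabla\f)$ which are not lower-order a priori. This is precisely why the gradient estimate of Proposition \ref{bound grad} is invoked: once $|\nabla\f|^2_\omega$ is bounded on $[\e,T]\times X$ (in terms of $\sup_X\f_0$ and $\e$), the dangerous mixed torsion-gradient terms are absorbed into $C\tr_{\omega_t}\omega$ and constants, and the inequality above holds with the stated dependence of constants.

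From there the argument is routine. I form $H$ as above, suppose its maximum on $[0,T-\e]\times X$ is attained at $(t_0,z_0)$ with $t_0>0$ (the case $t_0=0$ giving the bound directly from $\tr_\omega\omega_\e \le n$ up to the $\f_\e$-dependence, and contributing the $AOsc_X(\f_\e)+C$ term), and apply the maximum principle $0\le(\partial_t-\Delta_{\omega_t})H$ at that point. Using $\gamma'$ large enough so that $\lambda\gamma'\circ\f - (\text{bad constants}) \ge 1$, the term $-\gamma'\circ\f\,\Delta_{\omega_t}\f = -\gamma'\circ\f\,(n-\tr_{\omega_t}\omega)$ dominates $\tfrac{C\tr_{\omega_t}\omega}{\tr_\omega\omega_t}$, yielding at $(t_0,z_0)$ an inequality of the form
$$ t_0\log\tr_\omega\omega_{t_0} \;\le\; C + \gamma'\circ\f\cdot(\text{const}) - t_0\,\dot\f, $$
after which the lower bound $\dot\f \ge n\log t_0 - AOsc_X\f_\e - C$ from Proposition \ref{bound f'} (applied to the flow started at $\f_\e$, so $\dot\f\ge n\log(t+\e)-AOsc_X\f_\e-C\ge n\log\e - AOsc_X\f_\e - C$ is not quite enough — one keeps the sharper $n\log t_0$) converts $-t_0\dot\f$ into $-n\,t_0\log t_0 + (AOsc_X\f_\e+C)t_0 \le -n\log\e\cdot t_0 + (AOsc_X\f_\e+C)t_0$, since $s\mapsto -s\log s$ is bounded and $t_0\le T-\e$. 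Combining and using the bound on $\gamma\circ\f$ in terms of $Osc_X\f_\e$ gives the claimed estimate at the maximum point, hence everywhere. Re-indexing $t\mapsto t+\e$ produces the statement as written; the nonnegativity $0\le t\log\tr_\omega(\omega_{t+\e})$ is simply $\tr_\omega\omega_{t+\e}\ge n(\det\omega_{t+\e}/\det\omega)^{1/n}>0$ together with a lower bound on $\det(\omega_{t+\e})$ coming from Proposition \ref{bound f'}, or more directly the elementary $\tr_\omega\omega_{t+\e}\ge 1$ once the metrics are comparable; I would phrase it via the arithmetic-geometric inequality and the lower bound on $\dot\f$.
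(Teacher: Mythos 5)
Your plan follows essentially the same route as the paper's proof: the paper applies the maximum principle to $P=t\log\tr_\omega(\omega_{t+\e})-A\f_{t+\e}$ (your $\gamma\circ\f$, used linearly), restarts the flow at $\f_\e$ so that Theorem \ref{bound oscillation} makes the constants depend only on $\sup_X\f_0$ and $\e$, handles the Hermitian torsion through the Guan--Li coordinates of Lemma \ref{GL coordinates} together with the gradient bound of Proposition \ref{bound grad} at the point where $\nabla P=0$, produces the good term $-\tfrac{A}{2}\tr_{\omega_{t+\e}}\omega$ from $A\Delta_{\omega_{t+\e}}\f_{t+\e}=An-A\tr_{\omega_{t+\e}}\theta_{t+\e}$, and converts the remaining $\dot\f$-terms via Proposition \ref{bound f'} into the $A\,Osc_X(\f_\e)$ and $-n\log\e$ contributions, exactly as you propose. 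Only minor bookkeeping slips occur in your sketch (the evolution inequality should carry $C\tr_{\omega_{t+\e}}\omega$ rather than $C\tr_{\omega_{t+\e}}\omega/\tr_\omega\omega_{t+\e}$, and the useful zero-order term is $+\gamma'\circ\f\,\Delta_{\omega_t}\f=\gamma'\circ\f\,(n-\tr_{\omega_t}\theta_t)$ inside $(\partial_t-\Delta_{\omega_t})(-\gamma\circ\f)$, with the coefficient of $\dot\f$ being $\gamma'$ rather than $t_0$); none of these affects the scheme, which matches the paper's argument.
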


\begin{proof}
We first denote by $C$ a uniform constant only depending on $\e$ and $\sup_X \f_0$.

\medskip
Define
$$P=t\log\tr_{\omega}(\omega_{t+\e}) -A \f_{t+\e},$$
and $$u=\tr_{\omega}(\omega_{t+\e}),$$
with $A>0$ to be chosen latter. Set $\Delta_t:=\Delta_{\omega_{t+\e}}$, then
\begin{align*}
\frac{\partial}{\partial t}P&=\log u+t\frac{\dot{u}}{u}-A\dot{\f}_{t+\e},\\
\Delta_t P&=t\Delta_t\log u-A\Delta_t\f_{t+\e}
\end{align*}
hence
\begin{equation}\label{heat operator}
\left( \frac{\partial}{\partial t}-\Delta_t \right) P=\log u+ t\frac{\dot{u}}{u} - A\dot{\f}_{t+\e}-t\Delta_t \log u+A\Delta_t\f_{t+\e}.
\end{equation}
First, we have 
\begin{equation}\label{delta f_t+e}
A\Delta_t\f_{t+\e} =An-A\tr_{\omega_{t+\e}}(\theta_{t+\e})\leq An-\frac{A}{2}\tr_{\omega_{t+\e}}(\omega).
\end{equation}

Suppose $P$ attains its maximum at $(t_0,z_0)$. If $t_0=0$, we get the desired inequality. We now assume that $P(t,x)$ attains its maximum at $(t_0,x_0)$ with $t_0>0$. 

\medskip
It follows from Proposition \ref{bound f'}, Proposition \ref{bound f' above} and Theorem \ref{bound oscillation} that $\dot{\f}_{t+\e}$ depends on $\e$ and $\sup_X\f_0$, hence $$\bigg|\log \frac{\omega_t^n}{\omega^n}\bigg|\leq C.$$
Combine with the inequality 
$$\bigg(\frac{\tilde{\omega}^n}{\omega^n} \bigg)^{\frac{1}{n}}\leq\frac{1}{n}\tr_\omega \tilde{\omega}\quad (\text{cf. \cite[Lemma 4.1.1.]{BG13}}),$$ we infer that $$\tr_\omega\omega_{t+\e}\geq C^{-1},\quad\tr_{\omega_{t+\e}}\omega\geq C^{-1},$$ 
\begin{equation}\label{lower bound of trace}
\tr_\omega\omega_{t+\e}\leq C \tr_\omega\omega_{t+\e}\tr_{\omega_{t+\e}}\omega,\quad \tr_{\omega_{t+\e}}\omega\leq C \tr_\omega\omega_{t+\e}\tr_{\omega_{t+\e}}\omega.
\end{equation}

\medskip
Denoting $\tilde{g}(t,x)=g_{t+\e}(x)$ and using the local coordinate system (\ref{GL local coor}) at $(t_0,x_0)$, we have 
\begin{eqnarray*}
\Delta_t\tr_\omega\omega_{t+\e}&=& \tilde{g}^{i\bar{j}}\partial_i\partial_{\bar{j}} (g^{k\bar{l}} \tilde{g}_{k\bar{l}}) \\
&=&\sum\tilde{g}^{i\bar{i}} \tilde{g}_{k\bar{k}i\bar{i}}-2Re(\sum_{i,j,k} g_{j\bar{k}\bar{i}}\tilde{g}_{k\bar{j}i})+\sum\tilde{g}^{i\bar{i}} g_{j\bar{k}i}g_{k\bar{j}\bar{i}}\tilde{g}_{k\bar{k}}\\
&&+ \sum\tilde{g}^{i\bar{i}}g_{k\bar{j}i}g_{j\bar{k}\bar{i}}\tilde{g}_{k\bar{k}} -\sum \tilde{g}^{i\bar{i}}g_{k\bar{k}\bar{i}i}\tilde{g}_{k\bar{k}}\\
&\geq&\sum_{i,k} \tilde{g}^{i\bar{i}}\tilde{g}_{k\bar{k}i\bar{i}}-2Re(\sum_{i,j,k}\tilde{g}^{i\bar{i}}g_{j\bar{k}\bar{i}}\tilde{g}_{k\bar{j}i} )-C\tr_{\omega}\omega_{t+\e}\tr_{\omega_{t+\e}\omega},
\end{eqnarray*}
where the last inequality comes from $(\ref{lower bound of trace})$.
Since \begin{eqnarray}
\Delta_\omega \dot{\f}=\tr_\omega(Ric(\Omega)-Ric(\omega_{t+\e})) +\Delta_\omega F(t,z,\f_{t+\e})
\end{eqnarray}
and
$$\tr_\omega Ric(\omega_{t+\e})=\sum_{i,k}\tilde{g}^{i\bar{i}}(-\tilde{g}_{i\bar{i}k\bar{k}}+\tilde{g}^{j\bar{j}} \tilde{g}_{i\bar{j}k}\tilde{g}_{j\bar{i}\bar{k}}),$$
we have
\begin{eqnarray}
\sum_{i,k}\tilde{g}^{i\bar{i}}\tilde{g}_{i\bar{i}k\bar{k}}=\sum_{i,j,k} \tilde{g}^{i\bar{i}}\tilde{g}^{j\bar{j}}\tilde{g}_{i\bar{j}k}\tilde{g}_{j\bar{i}\bar{k}}-\tr_\omega\omega_{t+\e}.
\end{eqnarray}
Therefore
\begin{eqnarray*}
\Delta_t\tr_\omega\omega_{t+\e}&\geq &\sum_{i,j,k} \tilde{g}^{i\bar{i}}\tilde{g}^{j\bar{j}}\tilde{g}_{i\bar{j}k}\tilde{g}_{j\bar{i}\bar{k}}-\tr_\omega Ric(\omega_{t+\e})\\
&&-2Re(\sum_{i,j,k}\tilde{g}^{i\bar{i}}g_{j\bar{k}\bar{i}}\tilde{g}_{k\bar{j}i} )-C_4\tr_\omega\omega_{t+\e}\tr_{\omega_{t+\e}}\omega
\end{eqnarray*}
Now we have
\begin{eqnarray*}
|2Re(\sum_{i,j,k}\tilde{g}^{i\bar{i}}g_{j\bar{k}\bar{i}}\tilde{g}_{k\bar{j}i} )|&\leq& \sum_{i}\sum_{j \neq k} (\tilde{g}^{i\bar{i}}\tilde{g}^{j\bar{j}}\tilde{g}_{i\bar{j}k} \tilde{g}_{j\bar{i}\bar{k}}+\tilde{g}^{i\bar{i}}\tilde{g}_{j\bar{j}}g_{j\bar{k}}g_{k\bar{j}\bar{k}})\\
&\leq& \sum_{i}\sum_{j \neq k} \tilde{g}^{i\bar{i}}\tilde{g}^{j\bar{j}}\tilde{g}_{i\bar{j}k} \tilde{g}_{j\bar{i}\bar{k}} +C\tr_\omega\omega_{t+\e}\tr_{\omega_{t+\e}}\omega
\end{eqnarray*}
It follows that 
\begin{equation}\label{delta tr}
\Delta_t\tr_\omega\omega_{t+\e} \geq \sum_{i,j}\tilde{g}^{i\bar{i}}\tilde{g}^{j\bar{j}}\tilde{g}_{i\bar{j}j} \tilde{g}_{j\bar{i}\bar{j}}-\tr_\omega Ric(\omega_{t+\e})-C\tr_\omega\omega_{t+\e}\tr_{\omega_{t+\e}}\omega.
\end{equation}
We now claim that 
\begin{eqnarray}\nonumber
\frac{|\partial\tr_\omega(\omega_{t+\e})|^2_{\omega_{t+\e}}}{(\tr_{\omega}\omega_{t+\e})^2}&\leq &\sum_{i,j}\tilde{g}^{i\bar{i}}\tilde{g}^{j\bar{j}} \tilde{g}_{i\bar{j}j}\tilde{g}_{j\bar{i}\bar{j}}+\frac{1}{t}  A^2|\nabla \f|^2\\ \label{gradient of trace}
&&+ \left(1+\frac{1}{t}\right)\frac{C\tr_{\omega_{t+\e}}\omega}{(\tr_{\omega}\omega_{t+\e})^2} +C\tr_{\omega_{t+\e}}\omega.
\end{eqnarray}
By computation,
\begin{eqnarray*}
\frac{|\partial\tr_\omega(\omega_{t+\e})|^2_{\omega_{t+\e}}}{(\tr_{\omega}\omega_{t+\e})^2}&=& \frac{1}{(\tr_{\omega}\omega_{t+\e})^2} \sum_{i,j,k}\tilde{g}^{i\bar{i}} \tilde{g}_{j\bar{j}i}\tilde{g}_{k\bar{k}\bar{i}} \\
&=& \frac{1}{(\tr_{\omega}\omega_{t+\e})^2}\sum_{i,j,k} \tilde{g}^{i\bar{i}}(h_{j\bar{j}i}-h_{i\bar{j}j}+\tilde{g}_{i\bar{j}j})(h_{k\bar{k}\bar{i}}-h_{k\bar{i}\bar{k}}+\tilde{g}_{k\bar{i}\bar{k}}) \\
&=&\frac{1}{(\tr_{\omega}\omega_{t+\e})^2}\sum_{i,j,k} \tilde{g}^{i\bar{i}}(T_{ij\bar{j}}+\tilde{g}_{i\bar{j}j})(\bar{T}_{ik\bar{k}}+\tilde{g}_{k\bar{i}\bar{k}}) \\
&=&\frac{1}{(\tr_{\omega}\omega_{t+\e})^2}\bigg(\sum_{i,j,k}\tilde{g}^{i\bar{i}}\tilde{g}_{i\bar{j}j}\tilde{g}_{k\bar{i}\bar{k}} + \sum_{i,j,k}\tilde{g}^{i\bar{i}}T_{ij\bar{j}}\bar{T}_{ik\bar{k}} +2Re\sum_{i,j,k }\tilde{g}^{i\bar{i}} T_{ij\bar{j}} \tilde{g}_{k\bar{i}\bar{k}}\bigg)
\end{eqnarray*}
where $T_{ij\bar{j}}=h_{j\bar{j}i}-h_{i\bar{j}j}$.

\medskip
It follows from the Cauchy-Schwarz inequality that
\begin{eqnarray}
\frac{1}{\tr_{\omega}\omega_{t+\e}}\sum_{i,j,k}\tilde{g}^{i\bar{i}}\tilde{g}_{i\bar{j}j}\tilde{g}_{k\bar{i}\bar{k}} \leq \sum_{i,j}\tilde{g}^{i\bar{i}}\tilde{g}^{j\bar{j}} \tilde{g}_{i\bar{j}j}\tilde{g}_{j\bar{i}\bar{j}}.
\end{eqnarray}
For the second term, we have
$$\sum_{i,j,k}\tilde{g}^{i\bar{i}}T_{ij\bar{j}}\bar{T}_{ik\bar{k}} \leq C\tr_{\omega_{t+\e}}\omega.$$
Now at the maximum point $(t_0,z_0), t_0>0$, we have $\nabla P=0$, hence $$A\dot{\f}_{\bar{i}}=t\frac{u_{\bar{i}}}{u}=\frac{t}{u}\sum_{k}g_{k\bar{k}\bar{i}}.$$
Since $\tilde{g}_{k\bar{i}\bar{k}}=\tilde{g}_{k\bar{k}\bar{i}}-\bar{T}_{ik\bar{k}}$, we have

 \begin{eqnarray*}
 \bigg| \frac{2}{(\tr_{\omega}\omega_{t+\e})^2} Re\sum_{i,j,k }\tilde{g}^{i\bar{i}} T_{ij\bar{j}} \tilde{g}_{k\bar{i}\bar{k}} \bigg| &=&\frac{2}{(\tr_{\omega}\omega_{t+\e})^2} \bigg| Re\sum_{i,j,k }\tilde{g}^{i\bar{i}} T_{ij\bar{j}} \tilde{g}_{k\bar{k}\bar{i}} +  Re\sum_{i,j,k }\tilde{g}^{i\bar{i}} T_{ij\bar{j}}\bar{T}_{ik\bar{k}} \bigg| \\
 &\leq& \frac{2}{\tr_{\omega}\omega_{t+\e}} \bigg|\frac{A}{t} Re\sum_{i,j }\tilde{g}^{i\bar{i}} T_{ij\bar{j}}\f_{\bar{i}} \bigg| +C\tr_{\omega_{t+\e}}\omega\\
 &\leq& \frac{1}{t} \bigg( A^2|\nabla \f|_\omega^2+ \frac{C\tr_{\omega_{t+\e}}\omega}{(\tr_{\omega}\omega_{t+\e})^2}  \bigg)+C\tr_{\omega_{t+\e}}\omega.
 \end{eqnarray*}
 Combining all of these inequalities we obtain  (\ref{gradient of trace}).
 
 \medskip
 It now follows from (\ref{delta tr}) and (\ref{gradient of trace}) that
 $$\Delta_t\log \tr_{\omega}\omega_{t+\e}\geq  -\frac{\tr_\omega Ric(\omega_{t+\e})}{\tr_\omega\omega_{t+\e}}  -  \frac{1}{t}  A^2|\nabla_\omega \f|^2- \bigg(1+\frac{1}{t}  \bigg)\frac{C\tr_{\omega_{t+\e}}\omega}{(\tr_{\omega}\omega_{t+\e})^2}-C\tr_{\omega_{t+\e}}\omega. $$
 
Moreover,
\begin{align*}
\frac{t\dot{u}}{u}&=\frac{t}{u} \bigg[\Delta_\omega\left( \log \omega_{t+\e}^n/\omega^n -\log\Omega/\omega^n -F(t,z,\f_{t+\e}) \right) +\tr_{\omega}\dot{\theta}_t\bigg],\\
&=\frac{t}{u}\bigg[-\tr_{\omega}(Ric\ \omega_{t+\e})+tr_\omega(\dot{\theta}_t+Ric\ \omega)-\Delta_\omega \left(F(t,z,\f)+\log\Omega/\omega^n\right)\bigg],
\end{align*}
It follows from Proposition \ref{bound grad}, $\tr_{\omega_{t+\e}}\omega\tr_{\omega}\omega_{t+\e}\geq n$ and $\tr_{\omega}\omega_{t+\e}\geq C^{-1}$ that
\begin{equation}\label{ineq 3}
-t\Delta_t \log u+ \frac{t\dot{u}}{u}\leq Ct\tr_{\omega_{t+\e}}(\omega)-t
\frac{\Delta_\omega \big[F(t,z,\f)+\log\Omega/\omega^n\big]}{\tr_{\omega}(\omega_{t+\e})}+C.
\end{equation}

Now $$\Delta_\omega F(t,z,\f_{t+\e})=\Delta_\omega F(z,.)+2Re\bigg[ g^{j\bar{k}}\left(\frac{\partial F}{\partial s}\right)_{j}\f_{\bar{k}}\bigg]+\frac{\partial F}{\partial s}\Delta_\omega \f +\frac{\partial^2 F}{\partial s^2}|\nabla \f|^2_\omega.$$
Therefore 
$$\big|\Delta_\omega\big( F(t,z,\f_{t+\e})+\log\Omega/\omega^n\big)\big|\leq C+C|\nabla\f|^2_\omega +C\tr_\omega\omega_{t+\e}.$$
Then we infer 
$$-\frac{\Delta_\omega [F(t,z,\f)+\log\Omega/\omega^n]}{\tr_{\omega}(\omega_{t+\e})}\leq \frac{1}{n}\tr_{\omega_{t+\e}}(\omega)(C+C|\nabla \f|^2_\omega)+C,$$
so from Proposition \ref{bound grad} and (\ref{ineq 3}) we have
\begin{equation}\label{ineq 4}
-t\Delta_t \log u+ \frac{t\dot{u}}{u}\leq Ct\tr_{\omega_{t+\e}}(\omega) +C.
\end{equation}

\medskip
Moreover, the inequalities $(n-1)\log x\leq x+C_n$ and 
 $$\tr_{\omega}\tilde{\omega}\leq \bigg(\frac{\tilde{\omega}^n}{\omega^n}\bigg)(\tr_ {\tilde{\omega}}\omega)^{n-1},$$
 for any two positive $(1,1)$-froms $\omega$ and $\tilde{\omega}$, imply that
\begin{align}\nonumber
\log u&=\log \tr_{\omega}(\omega_{t+\e})\leq \log \left( n\left( \frac{\omega_{t+\e}^n}{\omega^n}\right) \tr_{\omega_{t+\e}}(\omega)^{n-1} \right)\\ \nonumber
&=\log n + \dot{\f}_{t+\e}+F(t,z,\f) +(n-1)\log \tr_{\omega_{t+\e}}(\omega)\\ 
&\leq \dot{\f}_{t+\e}+ \tr_{\omega_{t+\e}}(\omega) +C. \label{ln u}
\end{align}

It follows from (\ref{heat operator}), (\ref{delta f_t+e}), (\ref{ineq 4}) and (\ref{ln u}) that
$$\left( \frac{\partial}{\partial t}-\Delta_t \right) P\leq C-(A-1)\dot{\f}_{t+\e}+ [Ct+1-A/2]\tr_{\omega_{t+\e}}\omega. $$

\medskip We choose $A$ sufficiently large such that $Ct+1-A/2<0$. Applying Proposition \ref{bound f'},
$$\left( \frac{\partial}{\partial t}-\Delta_t \right) P\leq C-(A-1)(n\log \e-AOsc_X\f_\e-C).$$
Now suppose $P$ attains its maximum at $(t_0,z_0)$. If $t_0=0$, we get the desired inequality. Otherwise, at $(t_0,z_0)$
$$0\leq \left( \frac{\partial}{\partial t}-\Delta_t \right) P\leq C-(A-1)(n\log \e-AOsc_X\f_\e-C).$$
Hence we get $$t\log\tr_\omega(\omega_{t+\e})\leq AOsc_X(\f_\e)+C+[C-n\log \e+ AOsc_X(\f_\e)]t,$$
as required.
\end{proof}
\subsection{Higher order estimates}
For the higher order estimates, one can follow  \cite{SzTo}  (see \cite{Nie} for  its version on Hermitian manifolds)  by bounding $$S=g_\f^{i\bar{p}}g_\f^{q\bar{j}}g_\f^{k\bar{r}}\f_{i\bar{j}k}\f_{\bar{p}q\bar{r}}\, \text{ and } |Ric(\omega_t)|_{\omega_t},$$
then using the parabolic Schauder estimates to obtain higher order estimates for $\f$. Additionally, we can also combine previous estimates with Evans-Krylov and Schauder estimates \cite[Theorem 1.7]{To16} to get the $C^k$ estimates for all $k\geq 0$.
\begin{thm}\label{full estimates}
For each $\e>0$ and $k\in \N$, there exists $C_k(\e)$ such that
$$||\f||_{\mathcal{C}^k([\e,T]\times X)}\leq C_k(\e).$$ 
\end{thm}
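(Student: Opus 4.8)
The plan is to assemble the full $\mathcal{C}^k$ bounds by bootstrapping from the lower-order estimates already established in the previous subsections, exactly in the spirit of \cite{SzTo,To16}. Fix $\e>0$. By Theorem \ref{bound oscillation} the oscillation $Osc_X\f_{\e}$ is controlled by $\sup_X\f_0$ and $\e$, so restarting the flow at time $\e$ we may treat $\f_{\e}$ as an initial datum whose $C^0$-norm is under control. The combination of Proposition \ref{bound f'}, Proposition \ref{bound f' above} and Theorem \ref{bound oscillation} yields a two-sided bound $|\dot\f_t|\le C$ on $[\e,T]\times X$, hence $|\log(\omega_t^n/\omega^n)|\le C$; Proposition \ref{bound grad} gives $|\nabla\f|_\omega^2\le C$; and Lemma \ref{bound delta} gives $0<C^{-1}\le \tr_\omega\omega_t\le C$, which together with the volume bound implies $C^{-1}\omega\le\omega_t\le C\omega$ on $[\e,T]\times X$. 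This is the uniform parabolicity needed for the higher-order theory.

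Next I would run the standard second-order-plus estimate. Following \cite{SzTo} (and its Hermitian adaptation \cite{Nie}), one bounds the quantity $S=g_\f^{i\bar p}g_\f^{q\bar j}g_\f^{k\bar r}\f_{i\bar j k}\f_{\bar p q\bar r}$ together with $|Ric(\omega_t)|_{\omega_t}$ by a maximum-principle argument applied to a suitable test function of the form $\log(S+K)-A\tr_\omega\omega_t$ (or a variant thereof), using the Guan--Li coordinates of Lemma \ref{GL coordinates} to handle the torsion terms, just as in the proof of Lemma \ref{bound delta}. Having $S$ bounded means $\omega_t$ is uniformly H\"older in the appropriate parabolic norm; more precisely, with $\omega_t$ uniformly equivalent to $\omega$ and the right-hand side $\log\frac{(\theta_t+dd^c\f)^n}{\Omega}-F$ uniformly bounded, the complex Monge--Amp\`ere flow is uniformly parabolic with bounded coefficients, so the parabolic Evans--Krylov theorem gives a uniform $\mathcal{C}^{2+\alpha}$ estimate on $[2\e,T]\times X$, and then parabolic Schauder bootstrapping (differentiating the equation and iterating) produces uniform $\mathcal{C}^k$ bounds for every $k$. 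Alternatively, one may quote directly the Evans--Krylov plus Schauder package recorded as \cite[Theorem 1.7]{To16}, which packages precisely this bootstrap once the $C^0$, gradient and Laplacian bounds are in hand; relabeling $\e\to\e/2$ at each stage absorbs the time-shift losses so that one lands on $[\e,T]\times X$ as stated.

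The main obstacle is the second-order-derivative-of-the-metric estimate (the bound on $S$ and on $|Ric(\omega_t)|_{\omega_t}$): in the Hermitian setting the Bochner-type computation produces torsion and torsion-derivative terms that are not present in the K\"ahler case, and one must show these are dominated by the good negative terms coming from $-A\tr_\omega\omega_t$ after choosing $A$ large, using the Cauchy--Schwarz manipulations in the Guan--Li frame exactly as was done for $\Delta\f$ in Lemma \ref{bound delta}. Once that technical estimate is secured, the remainder is the routine Evans--Krylov/Schauder machinery, which is insensitive to the Hermitian vs.\ K\"ahler distinction. Since these arguments are carried out in detail in \cite{SzTo,Nie,To16}, it suffices here to indicate the test functions and cite those references, which is what the statement of Theorem \ref{full estimates} does.
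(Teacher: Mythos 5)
Your proposal is correct and follows essentially the same route as the paper, which likewise reduces to the uniform equivalence of $\omega_t$ with $\omega$ obtained from the earlier estimates and then either bounds $S$ and $|Ric(\omega_t)|_{\omega_t}$ as in \cite{SzTo,Nie} or invokes the Evans--Krylov and Schauder package of \cite[Theorem 1.7]{To16}. Your additional remarks on the torsion terms and the Guan--Li coordinates are consistent with how the paper handles the Hermitian setting.
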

\section{Proof of Theorem A and B}\label{proof}
We now consider the complex Monge-Amp\`ere flow
$$
\hskip-2cm
(CMAF) \hskip2cm \frac{\partial \f_t}{\partial t}=\log  \frac{(\theta_t+dd^c \f_t)^n}{\Omega} -F(t,z,\f),
$$
starting from a $\omega$-psh function $\f_0$ with zero Lelong numbers at all points, where $ F(t,z,s)\in C^\infty([0,T]\times X\times \R,\R)$  with $\frac{\partial F}{\partial s}\geq 0$ and  $ \frac{\partial F}{\partial t}$ is bounded from below. 

\subsection{Convergence in $L^1$}\label{conv L1} We fisrt approximate $\f_0$ by a decreasing sequence $\f_{0,j}$ of smooth $\omega$-psh fuctions (see \cite{BK07}). Denote by $\varphi_{t,j}$ the smooth family of $\theta_t$-psh functions satisfying on $[0,T]\times X$
$$
\frac{\partial \f_t}{\partial t}=\log  \frac{(\theta_t+dd^c \f_t)^n}{\Omega} -F(t,z,\f)
$$
with initial data $\f_{0,j}$.
\medskip

It follows from the maximum principle \cite[Proposition 1.5]{To16}  that $j\mapsto \f_{j,t}$ is non-increasing. Therefore we can set
$$\f_t(z):=\lim_{j\rightarrow +\infty} \f_{t,j}(z).$$
Thanks to Lemma \ref{bound from below} the function $t\mapsto \sup_X \f_{t,j}$  is uniformly bounded, hence $\f_t$ is a well-defined $\theta_t$-psh function. Moreover, it follows from Theorem \ref{full estimates} that $\f_t$ is also smooth in $(0,T]\times X$ and satisfies
$$\frac{\partial \f_t}{\partial t}=\log  \frac{(\theta_t+dd^c \f_t)^n}{\Omega} -F(t,z,\f).$$

Observe that $(\f_t)$ is relatively compact in $L^1(X)$ as $t\rightarrow 0^+$, we now show  that $\f_t\rightarrow\f_0$ in $L^1(X)$ as $t\searrow 0^+$.
\medskip

First, let $\f_{t_k}$ is a subsequence of $(\f_t)$ such that $\f_{t_k}$ converges to some function $\psi$ in $L^1(X)$ as $t_k\rightarrow 0^+$. By the properties of plurisubharmonic functions, for all $z\in X$
$$\limsup_{t_k\rightarrow 0} \f_{t_k}(z)\leq \psi(z),$$
with equality almost everywhere. We infer that for almost every $z\in X$
$$\psi(z)=\limsup_{t_k\rightarrow 0}\f_{t_k}(z)\leq \limsup_{t_k\rightarrow 0} \f_{t_k,j}(z)=\f_{0,j}(z),$$
by continuity of $\f_{t,j}$ at $t=0$. Thus $\psi\leq \f_0$ almost everywhere.

\medskip
Moreover, it follows from Lemma \ref{bound from below} that
$$\f_t(z)\geq (1-2\beta t)\f_0(z)+t u(z)+n(t\log t-t)+ At, $$
with $u$ continuous, so
$$\f_0\leq \liminf_{t\rightarrow 0}\f_t.$$
Since $\psi\leq \f_0$ almost everywhere, we get $\psi=\f_0$ almost everywhere, so $\f_t\rightarrow \f_0$ in $L^1$.
\subsection{Uniform convergence}
If the initial condition $\f_0$ is continuous then by \cite[Proposition 1.5]{To16} we infer that $\f_t\in C^0([0,T]\times X)$, hence $\f_t$ uniformly converges to $\f_0$ as $t\rightarrow 0^+$. 

\subsection{Uniqueness and stability of solution}\label{uniqueness and stability}
We now study the uniqueness and stability for the complex Monge-Amp\`ere flow
\begin{equation}\label{reduced equation}
\frac{\partial \f_t}{\partial t}=\log  \frac{(\theta_t+dd^c \f_t)^n}{\Omega} -F(t,z,\f),
\end{equation}

where $ F(t,z,s)\in C^\infty([0,T]\times X\times \R,\R)$ satisfies
$$\frac{\partial F}{\partial s}\geq0\, \text{ and }\, \left|\frac{\partial F}{\partial t}\right|\leq C',$$
for some constant $C'>0$. 

\medskip
The uniqueness of solution follows directly from the same result in the K\"ahler setting \cite{To16} 

\begin{thm}\label{uniqueness}
Suppose $\psi$ and $\f$ are two solutions of (\ref{reduced equation}) with $\f_0\leq\psi_0$, then 
$\f_t\leq\psi_t.$ In particular, the equation (\ref{reduced equation}) has a unique solution. 
\end{thm}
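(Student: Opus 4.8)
The plan is to prove this exactly as in the Kähler case \cite[Proposition 1.5]{To16}, via the parabolic maximum principle applied to $H:=\f-\psi$; the key observation is that the torsion of the Hermitian metric never enters, because the comparison argument involves only first derivatives of the potentials together with the positivity of $\omega_{\psi,t}:=\theta_t+dd^c\psi$. Both $\f$ and $\psi$ are smooth and strictly $\theta_t$-psh on $(0,T]\times X$, so I would first fix $\delta>0$, run the argument on $[\delta,T]\times X$, and let $\delta\searrow 0$ at the end, using that $\f_t\to\f_0$ and $\psi_t\to\psi_0$ (in $L^1$, and uniformly when the data are continuous) together with $\f_0\le\psi_0$. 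For continuous initial data this boundary input is immediate; in general it follows by combining the lower barrier of Lemma~\ref{bound from below} with the elementary maximum-principle upper bound for solutions (cf.\ \cite[Lemma 2.1]{To16}).

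Next I would derive a differential inequality for $H$. Subtracting the two equations and cancelling the common volume form,
\begin{equation*}
\partial_t H=\log\frac{(\theta_t+dd^c\f)^n}{(\theta_t+dd^c\psi)^n}-\bigl(F(t,z,\f)-F(t,z,\psi)\bigr).
\end{equation*}
Writing $F(t,z,\f)-F(t,z,\psi)=b(t,z)\,H$ with $b(t,z):=\int_0^1 \tfrac{\partial F}{\partial s}\bigl(t,z,\psi+rH\bigr)\,dr\ge 0$ (using $\partial F/\partial s\ge 0$), and invoking the concavity of $A\mapsto\log\det A$ on positive Hermitian matrices, which gives
\begin{equation*}
\log\frac{(\theta_t+dd^c\f)^n}{(\theta_t+dd^c\psi)^n}\le \tr_{\omega_{\psi,t}}\bigl(dd^c H\bigr)=\Delta_{\omega_{\psi,t}}H,
\end{equation*}
I obtain on $(0,T]\times X$ the inequality $\bigl(\partial_t-\Delta_{\omega_{\psi,t}}\bigr)H+bH\le 0$ with $b\ge 0$.

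Finally I would run the maximum principle with a perturbation to force strictness: for $\eps>0$ set $H_\eps:=H-\eps t$, and since $\eps t$ is independent of $z$ one gets $\bigl(\partial_t-\Delta_{\omega_{\psi,t}}\bigr)H_\eps+bH_\eps\le -\eps-b\eps t\le-\eps<0$. If $\sup_{[0,T]\times X}H_\eps>0$, then since $H_\eps(0,\cdot)=H(0,\cdot)\le 0$ the supremum is attained at some $(t_0,z_0)$ with $t_0>0$, where $\partial_t H_\eps\ge 0$, $\Delta_{\omega_{\psi,t_0}}H_\eps\le 0$ and $b(t_0,z_0)H_\eps(t_0,z_0)\ge 0$, contradicting the strict negativity just obtained. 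Hence $H_\eps\le 0$ for all $\eps>0$, and letting $\eps\searrow 0$ yields $\f\le\psi$; exchanging the roles of $\f$ and $\psi$ when $\f_0=\psi_0$ gives uniqueness. The step I expect to be the only real obstacle is the control at the boundary $t=0$ for merely $L^1$ initial data (ensuring the maximum of $H_\eps$ cannot sit there); the interior argument is the standard parabolic maximum principle, unchanged from \cite{To16}.
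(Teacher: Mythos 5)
Your interior computation is fine: linearizing $F$ via $b(t,z)=\int_0^1\partial_sF(t,z,\psi+rH)\,dr\ge 0$, the concavity of $\log\det$, and the $-\e t$ perturbation reproduce the parabolic maximum principle of \cite[Proposition 1.5]{To16}, and this does prove the comparison whenever both solutions attain their initial values continuously (in particular for continuous data). The genuine gap is exactly the boundary step you flag at the end, and the fix you propose does not close it. A solution in the sense of Theorem A is only required to satisfy $\f_t\to\f_0$ and $\psi_t\to\psi_0$ in $L^1(X)$, and $L^1$ convergence of the two families to ordered limits does \emph{not} yield the bound $\limsup_{\delta\to 0^+}\sup_X(\f_\delta-\psi_\delta)\le 0$ that your maximum principle on $[\delta,T]\times X$ requires. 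You cannot invoke Lemma \ref{bound from below} for the competitor $\psi$: its proof is itself a maximum-principle comparison at $t=0$, so it is available a priori only for solutions continuous up to $t=0$ (e.g.\ the approximating flows with smooth data and their decreasing limit, i.e.\ the canonical solution); proving that an \emph{arbitrary} weak solution stays above such a barrier is essentially the hard half of the uniqueness statement, not an ingredient one may assume. Likewise \cite[Lemma 2.1]{To16} only gives $\sup_X\f_t\le\sup_X\f_0+Ct$; it provides no pointwise bound $\f_t(z)\le\f_0(z)+c(t)$, and no such bound can hold in general since $\f_t$ is smooth (hence bounded) for $t>0$ while $\f_0$ may be unbounded below.

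Note also that your argument never uses the hypothesis $|\partial F/\partial t|\le C'$ imposed in Section \ref{uniqueness and stability} and in the uniqueness clause of Theorem A; if the bare maximum principle sufficed, no upper bound on $\partial F/\partial t$ would be needed, which is a sign the route is too naive. The proof the paper actually appeals to is the uniqueness theorem of \cite{To16}, which follows the scheme of Di Nezza--Lu \cite{DiNL14}: one direction (comparison against the canonical solution built from smooth approximants $\psi_{0,j}\searrow\psi_0$) can indeed be run with your maximum principle, because Hartogs' lemma converts $L^1$ convergence into the required one-sided sup estimate against the \emph{continuous} data $\psi_{0,j}$; but the other direction -- showing an arbitrary weak solution cannot drop below the canonical one -- is obtained by comparing time-shifted flows started from the smooth slices $\psi_s$, $s>0$, and passing to the limit $s\to 0$ via the stability estimates, and it is there that the two-sided bound on $\partial F/\partial t$ (and the regularity of $t\mapsto\theta_t$) is used to absorb the error created by the shift. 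Without an argument of this type, uniqueness among all families that are smooth on $(0,T]\times X$ and merely $L^1$-continuous at $t=0$ is not established.
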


The stability result also comes from the same argument as in \cite{To16}. The difference is that we use Theorem \ref{DK} instead of the one in K\"ahler manifolds. 
\begin{thm}\label{qualitative stab}
Fix $\varepsilon>0$. Let $\varphi_{0,j}$ be a sequence of $\omega$-psh functions with zero Lelong number at all points, such that $\f_{0,j}\rightarrow \f_0$ in $L^1(X)$. Denote by $\varphi_{t,j}$ and $\f_j$ the solutions of (\ref{reduced equation}) with the initial condition $\f_{0,j}$ and $\f_0$ respectively. Then
$$\f_{t,j}\rightarrow \f_{t}\  \text{ in }\  C^\infty ([\e, T]\times X)\ \text{ as }\  j\rightarrow +\infty.$$

\medskip
Moreover, if $\f,\psi\in C^\infty((0,T]\times X)$  are solutions of $(CMAF)$ with continuous initial data $\f_0$ and $\psi_0$, then 
\begin{equation}\label{stability ineq}
||\f-\psi||_{C^{k}([\e,T]\times X)}\leq C(k,\e)||\f_0-\psi_0||_{L^\infty(X)
}.
\end{equation} 
\end{thm}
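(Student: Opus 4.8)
The plan is to transpose the proof of \cite[Theorems 4.3 and 4.4]{To16} to the Hermitian setting, using the \emph{a priori} estimates of Section \ref{a priori estm} — in particular the oscillation bound of Theorem \ref{DK} — in place of their K\"ahler analogues. For the first assertion, fix $p>1$ and apply the estimates of Section \ref{a priori estm} uniformly over $\mathcal F:=\{\f_0\}\cup\{\f_{0,j}:j\in\N\}$. Since $\mathcal F$ is a compact family in $L^1(X)$ with vanishing Lelong numbers, the uniform version of Skoda's integrability theorem (\cite[Proposition 7.1]{Sko}, \cite[Theorem 3.1]{Zer01}) bounds the $L^p(\Omega)$-norms of the densities $H_{t,j}=\exp(\dot{\f}_{t,j}+F)$ by a constant $C(\e)$ for $t\in[\e,T]$, independently of $j$; Theorems \ref{bound oscillation} and \ref{full estimates} then give $\|\f_{t,j}\|_{C^k([\e,T]\times X)}\le C_k(\e)$ with $C_k(\e)$ independent of $j$, for every $k$ and $\e\in(0,T)$. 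By a diagonal compactness argument, every subsequence of $(\f_{t,j})_j$ has a further subsequence converging in $C^\infty_{loc}((0,T]\times X)$ to a limit $\tilde\f$ which solves $(CMAF)$ there; applying Lemma \ref{bound from below} to each $\f_{t,j}$ with uniform constants, together with the upper bound for $\f_{t,j}$, one checks, exactly as in \S\ref{conv L1}, that $\tilde\f_t\to\f_0$ in $L^1(X)$ as $t\searrow0^+$. By uniqueness of the solution attached to a rough initial datum (the last assertion of Theorem A; see also Theorem \ref{uniqueness}) we get $\tilde\f=\f$, and since the limit is independent of the subsequence, $\f_{t,j}\to\f_t$ in $C^\infty([\e,T]\times X)$.

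For the quantitative statement, assume $\f_0,\psi_0\in C^0(X)$ and put $\de:=\|\f_0-\psi_0\|_{L^\infty(X)}$. Since $dd^c(\psi_t+\de)=dd^c\psi_t$ and $\partial F/\partial s\ge0$, the function $\psi_t+\de$ is a supersolution of $(CMAF)$ with initial datum $\psi_0+\de\ge\f_0$, so the comparison principle \cite[Proposition 1.5]{To16} gives $\f_t\le\psi_t+\de$; by symmetry $\f_t\ge\psi_t-\de$, hence $\|\f_t-\psi_t\|_{L^\infty(X)}\le\|\f_0-\psi_0\|_{L^\infty(X)}$ for all $t\in[0,T]$. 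To upgrade this to higher order, set $w:=\f-\psi\in C^\infty((0,T]\times X)$ and subtract the two copies of $(CMAF)$; differentiating $\log\det$ along the segment joining $\theta_t+dd^c\psi_t$ to $\theta_t+dd^c\f_t$, and $F$ along its last argument, one finds that $w$ solves the homogeneous linear parabolic equation
$$\frac{\partial w}{\partial t}=a^{i\bar j}(t,z)\,\partial_i\partial_{\bar j}w-b(t,z)\,w,$$
where $b(t,z)=\int_0^1(\partial F/\partial s)\bigl(t,z,(1-s)\psi_t+s\f_t\bigr)\,ds\ge0$ and $(a^{i\bar j}(t,z))$ is the average over $s\in[0,1]$ of the inverses of the positive forms $\theta_t+dd^c\bigl((1-s)\psi_t+s\f_t\bigr)$, hence positive definite. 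By Theorem \ref{full estimates}, whose constants depend only on $\e$ and on $\|\f_0\|_{C^0},\|\psi_0\|_{C^0}$ when the initial data are bounded, $\f$ and $\psi$ are bounded in $C^m([\e/2,T]\times X)$ for all $m$, and together with the bounds on $\dot{\f},\dot{\psi}$ the forms $\theta_t+dd^c\f_t$ and $\theta_t+dd^c\psi_t$ are pinched between positive constant multiples of $\omega$ there; hence $a^{i\bar j}$ is uniformly elliptic and $a^{i\bar j},b$ are bounded in $C^m([\e/2,T]\times X)$. Interior parabolic Schauder estimates for this linear equation (as in \cite[Theorem 1.7]{To16}) then give $\|w\|_{C^k([\e,T]\times X)}\le C(k,\e)\|w\|_{C^0([\e/2,T]\times X)}$, which combined with the $C^0$ bound above yields (\ref{stability ineq}).

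The step I expect to demand the most care is the \emph{uniformity in $j$} of the \emph{a priori} estimates in the first part: this is exactly where the hypothesis of vanishing Lelong numbers is used, since it is what lets the uniform Skoda estimate produce a $j$-independent $L^p$-bound for $H_{t,j}$ and hence, through Theorem \ref{DK}, $j$-independent oscillation, gradient and Laplacian bounds. A second subtlety is the identification $\tilde\f=\f$: this is not a pointwise statement at $t=0$ but must be read off from the $L^1$-convergence of the data, the lower barrier of Lemma \ref{bound from below}, and the uniqueness part of Theorem A — exactly the mechanism used in the proof of Theorem A itself. The construction of the barriers in the quantitative part, the derivation of the linearized equation, and the parabolic Schauder step are then routine.
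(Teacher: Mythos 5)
Your argument is essentially the paper's: a uniform-in-$j$ bound on the densities $H_{t,j}$ (you obtain it via the uniform Skoda--Zeriahi estimate on the compact family $\{\f_0\}\cup\{\f_{0,j}\}$, the paper via Hartogs' lemma together with Demailly--Koll\'ar \cite{DK01} --- the same mechanism suggested in the remark of Section \ref{a priori estm}), then the a priori estimates of Section \ref{a priori estm}, compactness and identification of the limit through the lower barrier and uniqueness, and, for \eqref{stability ineq}, the comparison principle plus linearization and parabolic Schauder estimates, i.e.\ the scheme of \cite[Theorems 4.3, 4.4]{To16} to which the paper defers. The only point to tighten is the upper bound in the identification step: it is not ``exactly as in \S\ref{conv L1}'', since that argument uses that the approximants decrease; for a general $L^1$-convergent sequence one gets $\limsup_{t\to 0}\tilde\f_t\le\f_0$ by comparing $\f_{t,k}$ with the flows started from the decreasing envelopes $(\sup_{k\ge j}\f_{0,k})^*$ (or by Hartogs' lemma), as in \cite{To16}.
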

\begin{proof}
We use the techniques in Section \ref{a priori estm} to obtain estimates of $\f_{t,j}$ in $C^k([\e,T]\times X)$ for all $k\geq 0$. In particular, for the $C^0$ estimate, we need to have uniform bound for 
$H_{t,j}=\exp(\dot{\f}_{t,j}+F)$ in order to use Theorem \ref{DK}. By Lemma \ref{bound f' above} we have
$$H_{t,j}=\exp(\dot{\f}_{t,j}+F)\leq \exp\bigg(\frac{-\f_{0,j}+C}{t}+C'\bigg),$$
where $C,C'$ depend on $\e, \sup_X {\f_{0,j}}$. Since $\f_{0,j}$ converges to $\f_0$ in $L^1$, we have the $\sup_X\f_{0,j}$ is uniformly bounded in term of $\sup_X\f_0$ for all $j$ by the Hartogs lemma, so we can choose $C,C'$ independently of $j$. It follows from \cite[Theorem 0.2 (2)]{DK01} that there is a constant $A(t,\e)$ depending on $t$ and $\e$ such that $||H_{t,j}||_{L^2(X)}$ is uniformly bounded by $A(t,\e)$ for all $t\in [\e,T]$. The rest of the proof is now siminar to \cite[Theorem 4.3]{To16}.
\end{proof}
\section{Chern-Ricci flow and canonical surgical contraction}\label{proof of conjecture} In this section, we give a proof of the conjecture of Tosatti and Weinkove. Let  $(X,\omega_0)$ be a Hermitian manifold. Consider the Chern-Ricci flow on $X$,
\begin{equation}\label{CRF contraction}
\frac{\partial}{\partial t}\omega_t=-Ric(\omega),\qquad  \omega{|_{t=0}}=\omega_0. 
\end{equation}
Denote
$$T:=\sup\{t\geq 0| \exists \psi\in C^\infty(X) \text{ with } \hat{\omega}_X+dd^c\psi>0\},$$
where $\hat{\omega}_X= \omega_0+t\chi,$ with $\chi$ is a smooth $(1,1)$-form representing $- c_1^{BC}(X)$.

\medskip
Assume that there exists a holomorphic map between compact Hermitian manifolds $\pi:X\rightarrow Y$  blowing down an exceptional divisor $E$ on $X$ to one point $y_0\in Y$. In addition, assume that there exists a smooth function $\rho$ on $X$ such that
\begin{equation}\label{condition of cohomology class}
\omega_0-TRic(\omega_0)+dd^c\rho=\pi^*\omega_Y,
\end{equation}
with $T<+\infty$, where $\omega_Y$ is a Hermitian metric on $Y$. In \cite{TW15,TW13}, Tosatti and Weinkove proved that the solution $\omega_t$ to the Chern-Ricci flow (\ref{CRF contraction}) converges in $C^\infty_{loc}(X\setminus E)$ to a smooth Hermitian metric $\omega_T$ on $X\setminus E$.
Moreover, there exists a distance function $d_T$ on $Y$ such that $(Y,d_T)$ is a compact metric space and $(X,g(t))$ converges in the Gromov-Hausdorff sense $(Y,d_T)$ as $t\rightarrow T^-$. Denote by $\omega' $ the push-down of the current $\omega_T$ to $Y$. They conjectured that:
\begin{conj} \cite[Page 2120]{TW13}\label{conjecture}
\begin{enumerate}
\item There exists a smooth maximal solution $\omega_t$ of the Chern-Ricci flow on $Y$ for $t\in (T,T_Y)$ with $ T<T_Y\leq+\infty$ such that $\omega_t$ converges to $\omega'$, as $t\rightarrow T^+$, in $C^{\infty}_{loc}(Y\setminus \{y_0\})$. Furthermore, $\omega_t$ is uniquely determined by $\omega_0$.
\medskip
\item The metric space $(Y,\omega_t)$ converges to $(Y,d_T)$ as $t\rightarrow T^+$ in the Gromov-Hausdorff sense. 
\end{enumerate}
\end{conj}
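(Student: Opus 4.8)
The plan is to realise the Chern--Ricci flow on $Y$ issued from $\omega'$ as a solution of $(CMAF)$, apply Theorems~A and~B to it, and then extract the Gromov--Hausdorff statement by the method of Song--Tosatti--Weinkove \cite{SW13,TW13}. First I would write $\omega'=\omega_Y+dd^c\psi_T$ and check that $\psi_T$ is an admissible initial datum. Since $\pi\colon X\setminus E\to Y\setminus\{y_0\}$ is biholomorphic and $\omega_T$ is smooth on $X\setminus E$, the current $\omega'$ is smooth on $Y\setminus\{y_0\}$, so $\psi_T$ is smooth there. From the estimates of Tosatti--Weinkove \cite{TW15,TW13} the potential of the Chern--Ricci flow on $X$ stays uniformly bounded up to time $T$, so the limit metric $\omega_T$ has a bounded global potential on $X$; combined with $\omega_T=\pi^*\omega_Y+dd^c(\f_T-\rho)$ coming from (\ref{condition of cohomology class}), its push-forward $\omega'=\pi_*\omega_T$ has a bounded potential $\psi_T$ on $Y$, hence $\psi_T\in PSH(Y,\omega_Y)\cap L^\infty(Y)$ and $\nu(\psi_T,y)=0$ for all $y\in Y$. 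By the standard substitution (as in Section~\ref{definition of CRF}, with $\eta=0$), the flow $\partial_t\omega=-Ric(\omega)$ on $Y$ with $\omega|_{t=T}=\omega'$ is then equivalent to
\[
\frac{\partial\f}{\partial t}=\log\frac{(\theta_t+dd^c\f)^n}{\Omega_Y}-F(t,z),\qquad \f|_{t=T}=\psi_T,
\]
where $\Omega_Y$ is a fixed smooth volume form, $\theta_t$ is a positive $dd^c$-exact modification of $\omega_Y-(t-T)Ric(\Omega_Y)$ (which can be taken Hermitian on $[T,T']$ for every $T'<T_Y$ by the definition of $T_Y$; no modification is needed near $t=T$, where $\theta_T=\omega_Y$), and $F$ depends only on $(t,z)$, so $\partial F/\partial s=0$ and $\partial F/\partial t$ is bounded, and the structural assumptions of Section~\ref{a priori estm} hold near $t=T$.

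For part~(1) I would apply Theorem~A to this $(CMAF)$: it yields a smooth family $(\f_t)_{t\in(T,T_Y)}$ of strictly $\theta_t$-psh functions solving it with $\f_t\to\psi_T$ in $L^1(Y)$ as $t\to T^+$, unique by the uniqueness clause since $F$ satisfies its hypotheses. Then $\omega_t:=\theta_t+dd^c\f_t$ is a smooth Chern--Ricci flow on $(T,T_Y)$, and it does not extend past $T_Y$ by the usual argument, since for $t\ge T_Y$ no $\theta_t$-psh potential is positive. To promote the $L^1$-convergence to convergence in $C^\infty_{\mathrm{loc}}(Y\setminus\{y_0\})$, I would approximate $\psi_T$ from above by smooth $\psi_{T,j}\in PSH(Y,\omega_Y)$, which converge to $\psi_T$ in $C^\infty_{\mathrm{loc}}(Y\setminus\{y_0\})$ because $\psi_T$ is smooth there; run the flow from each $\psi_{T,j}$ (smooth on $[T,T_Y)\times Y$ by short-time existence and Theorem~C) to get $\f_{t,j}$; by Theorem~B, $\f_{t,j}\to\f_t$ in $C^\infty([T+\e,T_Y-\e]\times Y)$; and on a neighbourhood of any compact $K\Subset Y\setminus\{y_0\}$ the $\psi_{T,j}$ are uniformly bounded in every $C^k$, so the interior form of the a priori estimates of Section~\ref{a priori estm} gives $j$-uniform $C^k$ bounds for $\f_{t,j}$ on $[T,T_Y-\e]\times K$. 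Letting $j\to\infty$ gives $\f_t\to\psi_T$ in $C^\infty_{\mathrm{loc}}(Y\setminus\{y_0\})$, i.e.\ $\omega_t\to\omega'$ there. Since $\omega'$ is determined by $\omega_0$ (run the flow on $X$ up to time $T$, then push down) and the $(CMAF)$ solution is unique, $\omega_t$ is uniquely determined by $\omega_0$; this proves~(1).

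For part~(2) I would follow \cite{SW13,TW13}. Recall that $d_T$ is the length metric on $Y$ induced by $\omega'$ on $Y\setminus\{y_0\}$, with curves through $y_0$ handled by a limiting procedure. The three ingredients are: (i) the $C^\infty_{\mathrm{loc}}(Y\setminus\{y_0\})$ convergence from~(1), which gives convergence of the $g(t)$-distances between points lying outside any fixed neighbourhood of $y_0$; (ii) a uniform diameter bound for $(Y,g(t))$ as $t\to T^+$, obtained from the uniform bound on the potentials $\f_t$ via a distance-versus-potential comparison as in \cite{SW13}; and (iii) a ``no long thin necks'' estimate, namely that the $g(t)$-diameter of a small metric neighbourhood of $y_0$ tends to $0$ as the neighbourhood shrinks, uniformly for $t$ near $T$, proved as in \cite{SW13,TW13} from the volume identity $\omega_t^n=e^{\dot\f_t+F}\Omega_Y$ and integration of the metric along curves. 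Assembling (i)--(iii) by the standard $\e$-net argument gives $(Y,g(t))\to(Y,d_T)$ in the Gromov--Hausdorff sense.

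The two places where genuine work is needed are the local $C^\infty$-up-to-$t=T$ convergence away from $y_0$ — which requires the estimates of Section~\ref{a priori estm} in a truly interior form, with constants governed by the local data on $K\Subset Y\setminus\{y_0\}$ rather than by $\sup_X$ — and the ``no long thin necks'' distance estimate underpinning~(2); everything else is an assembly of Theorems~A, B and~C together with the Tosatti--Weinkove contraction theorem.
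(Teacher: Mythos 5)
There is a genuine gap at exactly the point you flag as ``where genuine work is needed'': the local $C^\infty$-convergence up to $t=T$ away from $y_0$. Your plan is to run $(CMAF)$ directly on $Y$ from smooth approximants $\psi_{T,j}$ and to invoke ``the interior form of the a priori estimates of Section \ref{a priori estm}, with constants governed by the local data on $K\Subset Y\setminus\{y_0\}$''. No such interior estimates exist in the paper, and they are not a routine localization: every estimate in Section \ref{a priori estm} (Propositions \ref{bound f'}, \ref{bound f' above}, \ref{bound grad}, Lemma \ref{bound delta}) is proved by a global maximum principle on the compact manifold, with constants depending on $\sup_X\f_0$ and $Osc_X\f_\e$, and they degenerate as $t\searrow$ the initial time unless the data is globally smooth. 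Producing local-in-space, uniform-up-to-$t=T$ bounds near a point where the initial current is merely bounded is precisely the hard analytic content, and the paper takes a different route to it: it lifts back to $X$, approximates the flow on $Y$ by twisted Chern--Ricci flows (\ref{epsilon MAF}) with \emph{globally smooth} initial data $\f(T-\e)$ and densities $(\pi^*f_\e)\Omega_X$, proves the two weighted trace estimates $\omega^\e_t\leq C\pi^*\omega_Y/|s|^2_h$ and $\omega^\e_t\leq C|s|^{-2(1-\lambda)}\omega_0$ (Lemmas \ref{trace 1} and \ref{trace 2}, via Phong--Sturm/Tosatti--Weinkove barriers handling the torsion terms), identifies the limit of the push-downs $\pi_*\psi^\e$ with the unique solution on $Y$ by the stability theorem (Theorem \ref{qualitative stab}), and only then applies standard local parabolic theory away from $E$. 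Your approximation step also contains an unproved claim: decreasing smooth $\omega_Y$-psh approximants of $\psi_T$ need not converge in $C^\infty_{\mathrm{loc}}$ on the smooth locus without a specific construction.

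The same gap propagates into part (2). Your ingredients (ii) (``uniform diameter bound from the uniform bound on the potentials via a distance-versus-potential comparison'') and (iii) (``no long thin necks... from the volume identity and integration of the metric along curves'') are not how the Gromov--Hausdorff statement is obtained, and neither follows from potential bounds or volume bounds alone: what \cite{SW13,TW13} and the paper actually use are uniform \emph{metric upper bounds}, i.e. exactly the two estimates of Proposition \ref{two key estimates}, which feed into the diameter bound for the spheres $S_r$ and the bound $C|z|^\lambda$ for radial paths. In your framework (flow run directly on $Y$ from rough data, with no uniform control near $y_0$ up to $t=T$) there is no mechanism to produce these estimates, since in the paper they are proved on $X$ for the smooth approximating flows and then pushed down. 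A minor further point: the paper first establishes that $\phi_T$ is \emph{continuous} (via the $L^p$ bound on $\omega'^n/\omega_Y^n$ from \cite[Lemma 5.2]{SW13} and Theorem \ref{DK}), which is what yields uniform convergence $\phi_t\to\phi_T$ and is used when identifying the limit of the approximants; your proposal only records boundedness and vanishing Lelong numbers, which is strictly weaker than what the argument needs. The overall architecture (realize the continued flow as $(CMAF)$, apply Theorems A and B, follow \cite{SW13} for Gromov--Hausdorff) matches the paper, but the two key analytic inputs are asserted rather than proved, and the asserted substitutes would not suffice.
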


\medskip
We now prove this conjecture using Theorem A, Theorem B and some arguments in \cite{SW13, TW13}. 
\subsection{Continuing the Chern-Ricci flow}
We prove the first claim in the conjecture showing how to continue the Chern-Ricci flow. 

\medskip
Write $\hat{\omega}= \pi^*\omega_Y =\omega_0-TRic(\omega_0)+dd^c\rho$. Then there is a positive $(1,1)$-current $\omega_T=\hat{\omega}+dd^c\f_T$ for some bounded function $\f_T$. By the same argument in \cite[Lemma 5.1]{SW13} we have 
$$\f_T |_E=constant.$$
Hence there exists a bounded function $\phi_T$ on $Y$ that is smooth on $Y\setminus \{y_0\}$ with $\f_T=\pi^*\phi_T$.

\medskip
We now define a positive current $\omega'$ on $Y$ by
$$\omega'= \omega_Y+dd^c \phi_T\geq 0,$$
which is the push-down of the current $\omega_T$ to $Y$ and is smooth on $Y\setminus \{y_0\}$. By the same argument in \cite[Lemma 5.2]{SW13} we have $\omega'^n/\omega_Y^n\in L^p(Y) $. It follows from \cite[Theorem 5.2]{DK12} that $\phi_T$ is continuous. 

\medskip
We fix a smooth $(1,1)$ form $\chi\in -c_1^{BC}(Y)$ and a smooth volume form  $\Omega_Y$ such that $\chi=dd^c\log \Omega_Y$. Denote
$$T_Y:=\sup\{t>T| \omega_Y+(t-T)\chi>0\}.$$
Fix $T'\in (T,T_Y)$, we have: 
\begin{thm}
There is a unique smooth family of Hermitian metrics $(\omega_t)_{T<t\leq T'}$ on $Y$ satisfying the Chern-Ricci flow
\begin{equation}\label{weak CRF}
\frac{\partial \omega_t}{\partial t}=-Ric(\omega_t), \qquad \omega_t|_{t=T}=\omega', 
\end{equation}
with $\omega_t=\omega_Y+(t-T)\chi+dd^c\phi_t$. Moreover, $\phi_t$ uniformly converges to $\phi_T$ as $t\rightarrow T^+$. 
\end{thm}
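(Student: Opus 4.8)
The plan is to reformulate the Chern-Ricci flow \eqref{weak CRF} on $Y$ as a complex Monge-Amp\`ere flow of the type $(CMAF)$ and then apply Theorem A and Theorem B. After a shift in time $t\mapsto t-T$, set $\hat\omega_t^Y:=\omega_Y+(t-T)\chi$, which is a Hermitian metric on $Y$ for $t\in(T,T_Y)$ and degenerates to $\omega'$ (only a positive current, with potential $\phi_T$ continuous by the preliminary discussion and $\omega'^n/\omega_Y^n\in L^p(Y)$) at $t=T$. Writing $\omega_t=\hat\omega_t^Y+dd^c\phi_t$, the flow \eqref{weak CRF} is equivalent, exactly as in the Introduction's reduction for the twisted Chern-Ricci flow, to
\begin{equation*}
\frac{\partial \phi_t}{\partial t}=\log\frac{(\hat\omega_t^Y+dd^c\phi_t)^n}{\Omega_Y},\qquad \phi_t|_{t=T}=\phi_T,
\end{equation*}
since $-Ric(\omega_t)=\chi+dd^c\log(\omega_t^n/\Omega_Y)$ and $\dot{\hat\omega}_t^Y=\chi$. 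This is $(CMAF)$ on $Y$ over the interval $[T,T']$ with $\theta_t=\hat\omega_t^Y$, $\Omega=\Omega_Y$, and $F\equiv 0$ (so $\partial F/\partial s\geq 0$ and $\partial F/\partial t$ is bounded, and the conditions \eqref{condition of F}, \eqref{condition of theta} hold after possibly rescaling near $t=T$ — note $\theta_t-(t-T)\dot\theta_t=\omega_Y\geq 0$).

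The key point is that the initial datum $\phi_T$ is a bounded (indeed continuous) $\omega_Y$-psh function, hence has zero Lelong number at every point. Therefore Theorem A applies: there is a family of smooth strictly $\hat\omega_t^Y$-psh functions $(\phi_t)_{T<t\leq T'}$ solving the Monge-Amp\`ere flow, with $\phi_t\to\phi_T$ in $L^1(Y)$ as $t\to T^+$, and since $\phi_T$ is continuous, $\phi_t\to\phi_T$ in $C^0(Y)$. Uniqueness of $(\phi_t)$ also follows from Theorem A since $F\equiv 0$ has $\partial F/\partial t$ bounded. Setting $\omega_t:=\hat\omega_t^Y+dd^c\phi_t$ for $t\in(T,T']$ then gives the desired smooth family of Hermitian metrics solving \eqref{weak CRF} with $\phi_t$ converging uniformly to $\phi_T$.

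One still has to check that the constructed $(\omega_t)$ genuinely solves the Chern-Ricci flow \eqref{weak CRF} and not merely the potential equation; this is the standard verification that $\partial\omega_t/\partial t=dd^c\partial\phi_t/\partial t+\chi=dd^c\log(\omega_t^n/\Omega_Y)+\chi=-Ric(\omega_t)$, valid on $(T,T']$ where $\phi_t$ is smooth. Finally, to pass from $(T,T']$ to the maximal interval $(T,T_Y)$ one exhausts by $T'\nearrow T_Y$ and uses uniqueness to glue; the claim that $(\omega_t)$ is uniquely determined by $\omega_0$ follows because $\phi_T$ (equivalently $\omega'$) is itself uniquely determined by $\omega_0$ through the Tosatti-Weinkove convergence result on $X\setminus E$. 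The main obstacle is not any single hard estimate — all the delicate a priori bounds are already packaged in Theorems A and B — but rather the bookkeeping needed to verify that the hypotheses \eqref{condition of F}–\eqref{condition of theta} on $\theta_t$, $\Omega$, $F$ hold in this geometric setting near the initial time $t=T$, and the regularity input that $\phi_T$ is continuous with $\omega'^n/\omega_Y^n\in L^p$, which is what legitimizes applying the zero-Lelong-number theory here.
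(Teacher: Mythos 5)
Your proposal is correct and follows essentially the same route as the paper: the proof there also rewrites the flow as the Monge-Amp\`ere flow $\partial_t\phi=\log\bigl((\hat\omega_Y+dd^c\phi_t)^n/\Omega_Y\bigr)$ with $\hat\omega_Y=\omega_Y+(t-T)\chi$ and continuous initial datum $\phi_T$, and then invokes Theorem A and Theorem B for existence, uniqueness, and uniform convergence as $t\rightarrow T^+$. Your additional verifications (zero Lelong numbers of $\phi_T$, the condition $\theta_t-(t-T)\dot\theta_t=\omega_Y\geq 0$, and recovering the metric-level flow from the potential equation) are exactly the implicit bookkeeping behind the paper's short argument.
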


\begin{proof}
We can rewrite the flow as the following complex Monge-Amp\`ere flow
\begin{equation}\label{MAF for CRF}
\frac{\partial \phi_t}{\partial t}=\log\frac{(\hat{\omega}_Y+dd^c\phi_t)^n}{\Omega},\qquad \phi |_{t=T}=\phi_T,
\end{equation}
where $\hat{\omega}_Y:=\omega_Y+(t-T)\chi $ and $\phi_T$ is continuous.

\medskip
It follows from Theorem A and Theorem B that there is a unique solution $\phi$ of (\ref{MAF for CRF}) in $C^\infty ((T,T']\times Y)$ such that $ \phi_t $ uniformly converges to $\phi_T$ as $t\rightarrow T^+$.
\end{proof}

\subsection{Backward convergences} 
Once the Chern-Ricci flow can be run from $\omega'$ on  $Y$, we can prove the rest of Conjecture \ref{conjecture} following the idea in \cite[Section 6]{SW13}. 

\medskip
We keep the notation as in \cite{TW15}. Let $h$ be a Hermitian metric on the fibers of the line bundle $[E]$ associated to the divisor $E$, such that for  $\e_0>0$ sufficiently small, we have
\begin{equation}\label{curvature of h}
\pi^* \omega_Y-\e_0 R_h>0, \qquad \text{where} \, \, R_h:=-dd^c\log h.
\end{equation}
Take $s$ a holomorphic section of $[E]$ vanishing along $E$ to order 1. We fix a a coordinate chart $U$ centered at $y_0$, which identities with the unit ball $B\subset \C^n $ via coordinates $z_1,\ldots,z_n$. Then the function $|s|^2_h$ on $X$ is given on $\pi^{-1}\left(B(0,1/2)\right)$ by
$$|s|^2_h(x)=|z_1|^2+\ldots+|z_n|^2:=r^2, \quad \text{ for }\pi(x)=(z_1,\ldots,z_n).$$
Hence, the curvature  $R(h)$ of $h$ is given by
$$R(h):=-dd^c\log (|z_1|^2+\ldots+|z_n|^2).$$ 

\medskip
The crucial ingredient of the proof of the conjecture is the following proposition:
\begin{prop}\label{two key estimates}

The solution $\omega_t$ of (\ref{weak CRF}) is in $C^{\infty}([T,T']\times Y\setminus\{y_0\})$ and there exists $\eta>0$ and a uniform constant $C>0$ such that for $t\in [T,T']$
\begin{enumerate}
\item $\omega_t\leq C\frac{\omega_Y}{\pi_*|s|^2_h},$
\item $\omega_t\leq C\pi_*\left(\frac{\omega_0}{|s|^{2(1-\eta)}} \right)$.
\end{enumerate}
\end{prop}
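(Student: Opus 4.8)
The plan is to establish the two estimates by a maximum principle argument applied to well-chosen quantities on $Y\setminus\{y_0\}$, using the structure of the flow \eqref{weak CRF} near $y_0$ together with the blow-down map $\pi$. First, smoothness of $\omega_t$ on $[T,T']\times Y\setminus\{y_0\}$ follows from pulling back by $\pi$ and invoking the interior regularity Theorem \ref{full estimates} (together with the local $C^\infty_{loc}$ convergence $\omega_t\to\omega'$ on $Y\setminus\{y_0\}$ supplied by Theorem B): away from $E$, the map $\pi$ is a biholomorphism, so the smooth solution downstairs is exactly the push-forward of the smooth solution upstairs, which exists by the a priori estimates of Section \ref{a priori estm}.

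For estimate (1), I would follow the Song--Tosatti--Weinkove strategy from \cite[Section 6]{SW13}. Consider the quantity $Q = \log\bigl(\tr_{\omega_Y}\omega_t\bigr) + \log|s|^2_h - A\phi_t$ (or a suitable variant involving a barrier of the form $\e_0\log|s|^2_h$), where $A>0$ is a large constant to be chosen. On $[T,T']\times(Y\setminus\{y_0\})$ this function tends to $-\infty$ as one approaches $y_0$, because $\omega_t$ has at worst the pole rate allowed by the Monge--Amp\`ere density $\omega'^n/\omega_Y^n\in L^p$, so $Q$ attains its maximum at an interior point or at $t=T$. Applying $\bigl(\partial_t - \Delta_{\omega_t}\bigr)$ to $Q$, using the parabolic Schwarz-lemma-type inequality for $\log\tr_{\omega_Y}\omega_t$ (which produces a term $-C\tr_{\omega_t}\omega_Y$ controlling the bad terms once $A$ is large), and using $dd^c\log|s|^2_h = -R_h$ together with the curvature positivity \eqref{curvature of h}, one bounds $\tr_{\omega_Y}\omega_t$ at the maximum; since $\phi_t$ and $\dot\phi_t = \log(\omega_t^n/\Omega)$ are uniformly controlled (Theorem \ref{bound oscillation}, Proposition \ref{bound f'}, Proposition \ref{bound f' above}), this yields $\tr_{\omega_Y}\omega_t \leq C/|s|^2_h$ on $Y\setminus\{y_0\}$, which is estimate (1) after noting $|s|^2_h = \pi_*|s|^2_h$.

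For estimate (2), the idea is to transfer the computation to $X$ via $\pi$ and exploit the cohomological identity \eqref{condition of cohomology class}, $\hat\omega = \pi^*\omega_Y = \omega_0 - TRic(\omega_0) + dd^c\rho$: thus $\pi^*\omega_Y$ is bounded above by a fixed multiple of $\omega_0$ plus $dd^c\rho$, and near $E$ one can compare $\pi^*\omega_Y$ with $\omega_0$ up to the factor $|s|^{2(1-\eta)}$ for a small $\eta>0$ determined by the geometry of the exceptional divisor and the available $L^p$ exponent. Running the same type of maximum principle with $\tr_{\omega_0}(\pi^*\omega_t)$ in place of $\tr_{\omega_Y}\omega_t$, and absorbing the torsion terms of $\omega_0$ exactly as in the gradient/Laplacian estimates of Section \ref{a priori estm} (using the Guan--Li coordinates of Lemma \ref{GL coordinates}), gives the refined bound $\pi^*\omega_t \leq C\,\omega_0/|s|^{2(1-\eta)}$, which is (2).

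The main obstacle I anticipate is the barrier/pole-order bookkeeping near $y_0$ (equivalently near $E$): one must choose the constants $\e_0$, $\eta$, $A$ and the precise combination of $\log|s|^2_h$ terms so that (a) the test function genuinely attains its maximum away from $y_0$, (b) the curvature term $-R_h$ has the right sign to beat the torsion and lower-order contributions, and (c) the $L^p$ integrability of the density, which is only barely available, is enough to close the estimate uniformly in $t\in[T,T']$. Handling the non-K\"ahler torsion terms uniformly, rather than the clean K\"ahler bookkeeping in \cite{SW13}, is the extra technical wrinkle, but it is exactly of the type already dealt with in Section \ref{a priori estm}.
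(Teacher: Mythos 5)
Your test functions are the right ones (the Song--Weinkove/Tosatti--Weinkove quantities $\log\tr\,\omega_t+\log|s|^{2}_h-A\phi$ with a Phong--Sturm term, and the curvature condition (\ref{curvature of h})), but the proposal misses the structural device that makes the paper's argument work: a regularization of the flow \emph{upstairs on $X$}. The paper does not apply the maximum principle to the singular flow on $Y$ at all. It introduces volume forms $(\pi^*f_\e)\Omega_X$ with $f_\e=(\e+r^2)^{n-1}$, reference forms $\theta^\e=\pi^*(\hat\omega_Y-\tfrac{\e}{T}\omega_Y)+\tfrac{\e}{T}\omega_0$, and solves the smooth flows (\ref{epsilon MAF}) on $X$ with the \emph{smooth} initial data $\f(T-\e)$ coming from the original Chern--Ricci flow on $X$ just before time $T$. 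The two trace estimates are proved for $\omega^\e=\theta^\e+dd^c\psi^\e$, uniformly in $\e$ (Lemmas \ref{trace 1} and \ref{trace 2}), and only then does one pass to the limit, identifying $\lim_\e\pi_*\psi^\e$ with the solution $\phi$ of (\ref{MAF for CRF}) via the stability Theorem \ref{qualitative stab}. This is not a cosmetic difference: your maximum principle on $[T,T']\times Y$ needs the quantity to be continuous up to $t=T$ and to tend to $-\infty$ at $y_0$ uniformly in $t$, but the behavior of $\tr_{\omega_Y}\omega_t$ near $y_0$ as $t\searrow T$ is exactly the unknown being estimated, so the argument as stated is circular; for the regularized flows on $X$ everything is smooth on all of $[T,T']\times X$ and the maximum principle applies legitimately.

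The same gap affects your first claim. Smoothness of $\omega_t$ on $[T,T']\times Y\setminus\{y_0\}$ \emph{including at $t=T$} does not follow from Theorem \ref{full estimates} or Theorem B: those give smoothness only for $t>T$, with bounds degenerating as $t\to T^+$ since $\phi_T$ is merely continuous, and Theorem B gives stability in the initial data for $t\geq\e>T$, not $C^\infty$ convergence to the initial data. In the paper, smoothness up to $t=T$ away from $y_0$ is a \emph{consequence} of the uniform-in-$\e$ estimate of Lemma \ref{trace 2} on compact sets away from $E$ together with local parabolic theory applied to the approximating flows; invoking the $C^\infty_{loc}$ backward convergence at this stage assumes part of what Proposition \ref{two key estimates} is designed to prove. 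Two further points your sketch glosses over and the regularized setup supplies: the twist term $\eta=-dd^c\log(\pi^*f_\e)+dd^c\log(\pi^*f)$ must be bounded by $C\omega_0$ uniformly in $\e$ (Lemma \ref{bound volume}), and the torsion is handled not by the Guan--Li coordinates of Section \ref{a priori estm} but by the identity $d\omega_0=\pi^*(d\omega_Y)$ coming from (\ref{condition of cohomology class}), which gives (\ref{torsion tensor relation}) and lets the Schwarz-lemma computation run with $\hat T=\pi^*T_Y$.
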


\medskip
In order to prove this propositon, we use the method in \cite{SW13} to construct a smooth approximant of the solution $\phi_t$ of (\ref{MAF for CRF}).  Denote by  $f_\e$  a family of positive smooth functions $f_\e$ on $Y$ such that it has the form
$$ f_\e(z)=(\e+r^2)^{n-1}, $$
on $B$, hence $f_\e(z)\rightarrow f(z)=r^{2(n-1)}$ as $\e\rightarrow 0$. Moreover, there is a smooth volume form $\Omega_X$ on $X$ with $\pi^*\Omega_Y=(\pi^*f)\Omega_X$.  

\medskip
Observe that $\hat{\omega}_Y(t)-\frac{\e}{T}\omega_Y$ is Hermitian on $Y$  for $t\in [T,T']$ if $\e$ is sufficiently small.  Therefore  $$\theta^\e:=\pi^*(\hat{\omega}_Y-\frac{\e}{T}\omega_Y)+\frac{\e}{T}\omega_0$$
is Hermitian for $\e>0$ sufficiently small.

\medskip
We denote $\psi^\e_t$ the unique smooth solution of the following Monge-Amp\`ere flow  on $X$:
\begin{align}\nonumber
&\frac{\partial \psi^\e}{\partial t}=\log\frac{\left(\theta^\e +dd^c\psi^\e\right)^n}{(\pi^*f_\e)\Omega_X}\\ \label{epsilon MAF}
&\psi^\e|_{t=T}=\f(T-\e),
\end{align}

Define K\"ahler metrics $\omega^\e$ on $[T,T']\times X$ by
\begin{equation}
\omega^\e=\theta^\e+dd^c\psi^\e,
\end{equation}

then 
\begin{equation}\label{example of twisted CRF}
\frac{\partial \omega^\e}{\partial t}=-Ric(\omega^\e)-\eta,
\end{equation}
where $\eta= -dd^c\log((\pi^*f_\e)\Omega_X)+\pi^*\chi= -dd^c\log(\pi^*f_\e)\Omega_X)+dd^c\log((\pi^*f)\Omega_X) .$

\medskip
We claim that 
$\pi_*\psi^\e_t$ converges to the solution  $\phi_t$ of the equation (\ref{MAF for CRF}) in $C^\infty([T,T']\times Y\setminus \{y_0\})$, then $\pi_*\omega^\e$ smoothly converges to $\omega_t$ on $[T,T']\times Y\setminus \{y_0\}$.

\medskip 
\begin{lem}\label{bound volume}
There exists $C>0$ such that for all $\e\in (0,\e_0)$  such that on $[T,T']\times X$ we have
\begin{enumerate}[label=\textup{(\roman*)}]
\item $\eta \leq  C\omega_0$,

\medskip
\item $Osc_X\psi^\e\leq C$;

\medskip
\item $\frac{(\omega^\e) ^n}{\Omega_X}\leq C.$
\end{enumerate}
\end{lem}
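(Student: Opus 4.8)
\emph{Overview.} The plan is to treat the three bounds separately, keeping in mind throughout that the only real issue is \emph{uniformity in} $\e$, since $\theta^\e$ degenerates along $E$ as $\e\to 0$.

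\emph{Part (i).} Off a fixed neighbourhood of $y_0$ one may take $f_\e\equiv f$, so $\eta\equiv 0$ there, and it is enough to bound $\eta$ on $\pi^{-1}(B)$. There I would set $h:=\pi^*(|s|^2_h)=\pi^*(|z_1|^2+\dots+|z_n|^2)$, a smooth non-negative function vanishing exactly along $E$, so that $\pi^*f=h^{n-1}$, $\pi^*f_\e=(\e+h)^{n-1}$, and, using $\pi^*\Omega_Y=(\pi^*f)\Omega_X$ together with $\chi=dd^c\log\Omega_Y$,
\begin{equation*}
\eta=-dd^c\log\bigl((\pi^*f_\e)\Omega_X\bigr)+dd^c\log\bigl((\pi^*f)\Omega_X\bigr)=(n-1)\bigl(dd^c\log h-dd^c\log(\e+h)\bigr)
\end{equation*}
(first on $\pi^{-1}(B)\setminus E$, hence everywhere by smoothness of both sides). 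In blow-up coordinates $h=|w_1|^2\rho$ with $\rho$ smooth and positive, so $dd^c\log h=dd^c\log\rho$ is a fixed smooth form bounded by $C\omega_0$; and since $\log|z|^2$ is plurisubharmonic on $\C^n$ one has $dd^c\log h\ge0$, i.e. $\tfrac{\sqrt{-1}}{\pi}\partial h\wedge\bar\partial h\le h\,dd^c h$, which together with $0\le h\le\e+h$ and $dd^c h\ge 0$ gives
\begin{equation*}
-dd^c\log(\e+h)=-\frac{dd^c h}{\e+h}+\frac{\tfrac{\sqrt{-1}}{\pi}\partial h\wedge\bar\partial h}{(\e+h)^2}\le -\frac{dd^c h}{\e+h}+\frac{dd^c h}{\e+h}=0 .
\end{equation*}
Hence $\eta\le(n-1)\,dd^c\log\rho\le C\omega_0$ with $C$ independent of $\e$, which is (i).

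\emph{Part (ii).} After the time shift $t\mapsto t-T$, (\ref{epsilon MAF}) is an instance of $(CMAF)$ with $F\equiv 0$, background $\theta^\e_t$, and reference volume $(\pi^*f_\e)\Omega_X$. I would first record the uniform-in-$\e$ facts: $\tfrac\e T\omega_0\le\theta^\e_t\le C\omega_0$ and $\int_X(\theta^\e_t)^n\ge c>0$ for $\e$ small, the densities $\pi^*f_\e$ are bounded, and the initial data $\psi^\e|_{t=T}=\f(T-\e)$ are uniformly bounded (from the a priori bounds for the Chern--Ricci flow (\ref{CRF contraction}) on $[0,T)$), hence form a compact family with uniformly vanishing Lelong numbers. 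By Proposition~\ref{bound f' above} (applied with $F=0$, using $\theta^\e_t-(t-T)\dot\theta^\e_t=(1-\tfrac\e T)\pi^*\omega_Y+\tfrac\e T\omega_0\ge0$) one gets $\dot\psi^\e_t\le(-\f(T-\e)+C)/(t-T)$, so the densities $(\omega^\e)^n/\omega_0^n=e^{\dot\psi^\e}\pi^*f_\e\,(\Omega_X/\omega_0^n)$ are bounded on $[T+\delta,T']$ uniformly in $\e$; the Hermitian Ko\l odziej estimate (as in Theorem~\ref{bound oscillation}, its constant depending only on the quantities just listed; cf. \cite{SW13}) then yields $Osc_X\psi^\e_t\le C(\delta)$ there. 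For $t$ near $T$ I would use barriers: the maximum principle gives $\psi^\e_t\le\sup_X\f(T-\e)+C(t-T)$ from above, and the Ko\l odziej subsolution of Lemma~\ref{bound from below} gives $\psi^\e_t\ge\f(T-\e)-c(t-T)$ from below with $c(s)\searrow0$; with the uniform bound on $Osc_X\f(T-\e)$ this bounds $Osc_X\psi^\e_t$ near $T$ as well. Letting $\delta\to 0$ gives (ii).

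\emph{Part (iii).} Since $(\omega^\e)^n/\Omega_X=e^{\dot\psi^\e}\pi^*f_\e$ and $\pi^*f_\e\le C$, it suffices to bound $\dot\psi^\e$ from above. Using (\ref{condition of cohomology class}) one computes $\omega^\e|_{t=T}=\theta^\e_T+dd^c\f(T-\e)=\omega_{T-\e}+(1-\tfrac\e T)dd^c\rho$; by the Tosatti--Weinkove estimates for (\ref{CRF contraction}) (in particular $\omega_t\le C\omega_0$ on $[0,T)$) and $dd^c\rho\le C\omega_0$ this gives $(\omega^\e|_{t=T})^n\le C\Omega_X$, i.e. $\dot\psi^\e|_{t=T}\le C$. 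Differentiating the flow gives $(\partial_t-\Delta_{\omega^\e})\dot\psi^\e=\tr_{\omega^\e}\dot\theta^\e=\tr_{\omega^\e}(\pi^*\chi)$; as $\theta^\e_t\ge c'\pi^*\omega_Y$ and $\pi^*\chi\le C\pi^*\omega_Y$ uniformly in $\e$, for $B$ large we have $\pi^*\chi-B\theta^\e_t\le 0$, and then
\begin{equation*}
(\partial_t-\Delta_{\omega^\e})(\dot\psi^\e-B\psi^\e)=\tr_{\omega^\e}(\pi^*\chi-B\theta^\e_t)+Bn-B\dot\psi^\e\le Bn-B\dot\psi^\e .
\end{equation*}
The maximum principle then gives $\dot\psi^\e-B\psi^\e\le C$ on $[T,T']\times X$ (the maximum is either at $t=T$, controlled above, or interior, where $\dot\psi^\e\le n$), so with (ii), $\dot\psi^\e\le C+B\psi^\e\le C$, proving (iii).

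\emph{Main obstacle.} The hard part is exactly the uniformity in $\e$: because $\theta^\e$ degenerates along $E$, none of the estimates of Section~\ref{a priori estm} that use a two-sided bound for the background can be quoted verbatim. What makes the argument go through is that every estimate actually needed uses only the one-sided bound $\theta^\e_t\le C\omega_0$---and, for the $\tr_{\omega^\e}(\pi^*\chi)$ term in (iii), the companion bounds $\theta^\e_t\ge c'\pi^*\omega_Y$ and $\pi^*\chi\le C\pi^*\omega_Y$---together with a uniform $L^p$ control of the right-hand sides, which is supplied by the uniform boundedness (hence uniformly vanishing Lelong numbers) of the potentials $\f(T-\e)$. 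Verifying that the Hermitian Ko\l odziej estimate can be made uniform under $\tfrac\e T\omega_0\le\theta^\e_t\le C\omega_0$ with $\int_X(\theta^\e_t)^n$ bounded below is the single point where the Hermitian setting requires a little extra care beyond the K\"ahler case of \cite{SW13}.
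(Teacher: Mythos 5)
Parts (i) and (ii) are essentially the paper's own route: (i) is the same explicit computation (you only need $dd^c\log(\e+r^2)\ge 0$ and the boundedness of $\pi^*dd^c\log r^2$ on the blow-up), and for (ii) the paper likewise invokes the Section \ref{a priori estm} machinery (upper bound on $\dot\psi^\e$ plus the Dinew--Ko\l{}odziej estimate), so your extra care about uniformity in $\e$ and the barrier argument near $t=T$ is in the right spirit, granting the standard two-sided comparison $(\theta^\e_t)^n\asymp(\pi^*f_\e)\Omega_X$ that this setting requires.

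Part (iii), however, has a genuine gap. You write $\dot\psi^\e|_{t=T}\le C$ ``i.e.'' $(\omega^\e|_{t=T})^n\le C\Omega_X$, but these are not equivalent: by (\ref{epsilon MAF}), $\dot\psi^\e=\log\bigl[(\omega^\e)^n/((\pi^*f_\e)\Omega_X)\bigr]$, and $(\pi^*f_\e)\Omega_X$ is of size $\e^{n-1}\Omega_X$ along $E$. So even if $(\omega^\e|_{t=T})^n\le C\Omega_X$ held, one would only get $\dot\psi^\e|_{t=T}\le C+(n-1)\log\frac{1}{\e+r^2}$, which blows up along $E$ as $\e\to0$; consequently the maximum principle applied to $\dot\psi^\e-B\psi^\e$ does not close (its value at $t=T$ is not uniformly bounded), and the conclusion $\dot\psi^\e\le C$ you aim for is stronger than (iii) and not obtained. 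Moreover the input you invoke, ``$\omega_t\le C\omega_0$ on $[0,T)$'', is not among the Tosatti--Weinkove estimates for (\ref{CRF contraction}): before the singular time they prove bounds of the form $\omega_t\le C\pi^*\omega_Y/|s|_h^{2}$ and $\omega_t\le C\omega_0/|s|_h^{2(1-\eta)}$, not a uniform trace bound against $\omega_0$; what is available is $\dot\f\le C$, i.e. a bound on the volume form $\omega_{T-\e}^n\le C\Omega_X$, and even then the mixed terms coming from the extra $(1-\tfrac{\e}{T})dd^c\rho$ in $\omega^\e|_{t=T}$ need to be handled (e.g.\ by normalizing the initial potential so that $\omega^\e|_{t=T}=\omega_{T-\e}$). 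The repair, which is what the paper's reference \cite[Lemma 6.2]{SW13} amounts to, is to run your maximum principle on the quantity actually appearing in (iii), namely $Q=\log\bigl[(\omega^\e)^n/\Omega_X\bigr]-B\psi^\e=\dot\psi^\e+\log(\pi^*f_\e)-B\psi^\e$: its evolution picks up the additional term $-\tr_{\omega^\e}\bigl(dd^c\log\pi^*f_\e\bigr)\le 0$ by the same positivity you used in (i), the term $\pi^*\chi-B\theta^\e$ is handled exactly as you did, an interior maximum gives $\dot\psi^\e\le n$ hence $Q\le n+\log(\pi^*f_\e)+B\sup|\psi^\e|\le C$ since $\pi^*f_\e\le C$, and at $t=T$ the value of $Q$ is controlled by the volume-form bound from $\dot\f\le C$ rather than by any trace bound.
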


\begin{proof}
By straightforward calculation, in $\pi^{-1}(B(0,1/2))$, we have
\begin{eqnarray*}
 \eta&=&-dd^c\log((\pi^*f_\e)\Omega_X)+dd^c\log((\pi^*f)\Omega_X)\\
 &\leq& (n-1)\pi^*\left(dd^c\log r^2 \right)\\
 &=& (n-1)\frac{\sqrt{-1}}{\pi} \pi^*\left(\frac{1}{r^2} \sum_{i,j}\left(\delta_{ij}-\frac{\bar{z}_i z_j}{r^2} dz_i\wedge d\bar{z}_j\right)\right)\\
 &\leq & C \omega_0,
\end{eqnarray*}
for some constant $C>0$.
This proves (i). Using the same argument in Section \ref{a priori estm} (see Theorem \ref{bound oscillation}) we get (ii). Finally, the estimate (iii)  follows from the same proof for the K\"ahler-Ricci flow (cf. \cite[Lemma 6.2]{SW13})
\end{proof}

\medskip
 Two following lemmas are essential to prove Proposition \ref{two key estimates}.
\begin{lem}\label{trace 1}
There exists $\eta>0$ and a uniform constant $C>0$ such that 
\begin{equation}\label{trace comparison 1}
\omega^\e_t\leq C\frac{\pi^*\omega_Y}{|s|^2_h}.
\end{equation} 
\end{lem}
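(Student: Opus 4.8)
The plan is to prove (\ref{trace comparison 1}) by a parabolic Schwarz--lemma (Aubin--Yau) estimate on $[T,T']\times(X\setminus E)$, in the spirit of Song--Tosatti--Weinkove \cite{SW13,TW13}, with all the torsion terms handled exactly as in Section \ref{a priori estm}. Since $\pi$ is a biholomorphism away from $E$, the form $\pi^*\omega_Y$ is a genuine Hermitian metric on $X\setminus E$ whose holomorphic bisectional curvature equals the pull-back of that of $\omega_Y$, hence is bounded below uniformly on $X\setminus E$; this is what makes $\pi^*\omega_Y$ (rather than a metric involving $\theta^\e$) the right reference form, its curvature and torsion being independent of $\e$. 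On $X\setminus E$ inequality (\ref{trace comparison 1}) is equivalent to $|s|^2_h\,\tr_{\pi^*\omega_Y}(\omega^\e_t)\le C$, so for $\delta\in(0,\tfrac12]$ I would consider
\[
Q_\delta=\log\tr_{\pi^*\omega_Y}(\omega^\e)+(1+\delta)\log|s|^2_h-A\psi^\e
\qquad\text{on }[T,T']\times(X\setminus E),
\]
with $A>0$ large. The term $\delta\log|s|^2_h$ forces the supremum of $Q_\delta$ into the interior: for each fixed $\e$ the metric $\omega^\e$ is smooth on all of $X$ and $\pi^*\omega_Y\ge c\,|s|^2_h\,\omega_0$ near $E$ (with $c$ independent of $\e$, from the local blow-up picture), so $|s|^2_h\,\tr_{\pi^*\omega_Y}(\omega^\e)$ stays bounded and $Q_\delta\to-\infty$ as $x\to E$. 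Thus $\sup Q_\delta$ is attained either at $t=T$, where it is bounded in terms of the uniformly controlled initial datum $\varphi(T-\e)$, or at an interior point $(t_0,x_0)$ with $t_0>T$, $x_0\notin E$.

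At such an interior maximum one applies $(\partial_t-\Delta_{\omega^\e})Q_\delta\ge 0$. The parabolic Aubin--Yau inequality gives $(\partial_t-\Delta_{\omega^\e})\log\tr_{\pi^*\omega_Y}(\omega^\e)\le C\,\tr_{\omega^\e}(\pi^*\omega_Y)+C$: the $-Ric(\omega^\e)$ coming from $\partial_t\omega^\e$ cancels in the usual way, the curvature contribution is uniform, the $\eta$-contribution is controlled using $|\eta|\le C|s|^{-2}_h\,\pi^*\omega_Y$ on $X\setminus E$ (uniformly in $\e$, from $\eta=(n-1)dd^c\log\frac{|s|^2_h}{\e+|s|^2_h}$ and the structure near $E$) together with the lower bound $\tr_{\pi^*\omega_Y}(\omega^\e)\ge c$ valid at the maximum point, and the torsion terms of the two Hermitian metrics are absorbed by Cauchy--Schwarz in the Guan--Li coordinates and the gradient estimate, exactly as in the proof of Lemma \ref{bound delta}. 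Next $(\partial_t-\Delta_{\omega^\e})\big((1+\delta)\log|s|^2_h\big)=(1+\delta)\tr_{\omega^\e}(R_h)\le C\,\tr_{\omega^\e}(\pi^*\omega_Y)$, using $R_h\le C\pi^*\omega_Y$ on $X\setminus E$ (near $E$, $R_h$ is essentially a Fubini--Study form, hence dominated by $\pi^*\omega_Y$; away from $E$ this is compactness). Finally $(\partial_t-\Delta_{\omega^\e})(-A\psi^\e)=-A\dot\psi^\e+An-A\,\tr_{\omega^\e}(\theta^\e)\le -A\dot\psi^\e+An-\tfrac{Ac_0}{2}\tr_{\omega^\e}(\pi^*\omega_Y)$, using $\theta^\e\ge\pi^*(\hat\omega_Y-\tfrac{\e}{T}\omega_Y)\ge\tfrac{c_0}{2}\pi^*\omega_Y$ uniformly for $t\in[T,T']$ and $\e$ small. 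Choosing $A$ so that $\tfrac{Ac_0}{2}$ beats the sum of the constants in front of $\tr_{\omega^\e}(\pi^*\omega_Y)$ yields, at $(t_0,x_0)$, the bound $\tr_{\omega^\e}(\pi^*\omega_Y)\le C-A\dot\psi^\e$.

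To convert this into a uniform bound for $Q_\delta$ I would use the elementary inequality $\tr_{\pi^*\omega_Y}(\omega^\e)\le C_n\big(\tr_{\omega^\e}(\pi^*\omega_Y)\big)^{n-1}\tfrac{(\omega^\e)^n}{(\pi^*\omega_Y)^n}$ together with $(\omega^\e)^n=e^{\dot\psi^\e}(\pi^*f_\e)\Omega_X$, $(\pi^*\omega_Y)^n\ge c\,|s|^{2(n-1)}_h\Omega_X$ (from $\pi^*\Omega_Y=(\pi^*f)\Omega_X$ and $\omega_Y^n$, $\Omega_Y$ being comparable volume forms), and the bound just obtained, to get $\tr_{\pi^*\omega_Y}(\omega^\e)\le C\,(C-A\dot\psi^\e)^{n-1}e^{\dot\psi^\e}\tfrac{(\e+|s|^2_h)^{n-1}}{|s|^{2(n-1)}_h}\le C'\tfrac{(\e+|s|^2_h)^{n-1}}{|s|^{2(n-1)}_h}$ at $(t_0,x_0)$, the last step because $x\mapsto(C-Ax)^{n-1}e^{x}$ is bounded on $\R$ (so no lower bound on $\dot\psi^\e$ is needed). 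If $n=2$, or if $|s|^2_h(x_0)\ge\e$, then $|s|^2_h\,\tr_{\pi^*\omega_Y}(\omega^\e)\le C'(1+\e/|s|^2_h)^{n-1}|s|^2_h\le C$ at once; in the remaining ``$\e$-shell'' $\{|s|^2_h<\e\}$ near $E$ one compares $\tr_{\omega^\e}(\pi^*\omega_Y)\le C-A\dot\psi^\e$ with the AM--GM bound $\tr_{\omega^\e}(\pi^*\omega_Y)\ge n\big((\pi^*\omega_Y)^n/(\omega^\e)^n\big)^{1/n}$ to bound $\dot\psi^\e$ from below there in terms of $\log(|s|^2_h/\e)$, which makes $|s|^{2(1+\delta)}_h\tr_{\pi^*\omega_Y}(\omega^\e)\to0$ as $x_0\to E$ and again gives $Q_\delta(t_0,x_0)\le C$ uniformly. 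Letting $\delta\to0^+$ and using $Osc_X\psi^\e\le C$ (Lemma \ref{bound volume}(ii)) yields $|s|^2_h\,\tr_{\pi^*\omega_Y}(\omega^\e_t)\le C$ uniformly in $\e$, i.e.\ (\ref{trace comparison 1}); smoothness of $\omega^\e$ up to $t=T$ is part of its construction.

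The main obstacle, throughout, is keeping every constant independent of $\e$: the Aubin--Yau step with the torsion of two Hermitian metrics (handled only via the coordinate system and gradient estimate of Section \ref{a priori estm}), the borderline size of the $\eta$-term, and --- most delicate --- the analysis near $E$ in the transition region $\{|s|^2_h\sim\e\}$, where the behaviour of $\omega^\e$ is governed by the $\tfrac{\e}{T}\omega_0$ piece of $\theta^\e$ and must be extracted from the Monge--Amp\`ere equation rather than from a comparison with a fixed metric. This is exactly the point where one must use the explicit structure of the approximating flow $\omega^\e$ of \cite{SW13}.
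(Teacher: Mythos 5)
Your overall strategy coincides with the paper's (a maximum principle applied to a quantity of the form $\log\tr_{\pi^*\omega_Y}\omega^\e+(1+\delta)\log|s|^2_h-A\psi^\e$, the $\delta$-term forcing an interior maximum off $E$, a Schwarz-lemma evolution inequality with all constants independent of $\e$), but two steps where you deviate from the paper are genuine gaps. First, you drop the Phong--Sturm term $\frac{1}{\tilde{\psi}^\e+C_0}$ (with $\tilde{\psi}^\e=\psi^\e-\frac{1+\delta}{A}\log|s|^2_h$) that the paper includes in its test function $H_\delta$, and you propose instead to absorb the first-order torsion term $\frac{2}{(\tr_{\hat{\omega}}\omega^\e)^2}Re\big(g^{k\bar{q}}\hat{T}^i_{ki}\partial_{\bar{q}}\tr_{\hat{\omega}}\omega^\e\big)$ ``by Cauchy--Schwarz and the gradient estimate, as in Lemma \ref{bound delta}''. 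That is not available here: the gradient estimate of Section \ref{a priori estm}, applied to the flow (\ref{epsilon MAF}) on $X$, carries constants depending on the reference data $\theta^\e$ and $(\pi^*f_\e)\Omega_X$, which degenerate near $E$ as $\e\to 0$, so there is no uniform-in-$\e$ bound on $|\nabla\psi^\e|$. At an interior maximum, $\nabla Q_\delta=0$ converts $\partial\tr_{\hat\omega}\omega^\e$ into $A\,\partial\tilde{\psi}^\e$, leaving a term of size $A^2|\partial\tilde{\psi}^\e|^2_{\omega^\e}\,\tr_{\omega^\e}\hat{\omega}/(\tr_{\hat{\omega}}\omega^\e)^2$ with nothing in your $Q_\delta$ to absorb $|\partial\tilde{\psi}^\e|^2_{\omega^\e}$; the extra term $1/(\tilde{\psi}^\e+C_0)$ is there precisely to produce the good term $-2|\partial\tilde{\psi}^\e|^2_{\omega^\e}/(\tilde{\psi}^\e+C_0)^3$ which does this without any gradient bound. (Also, the paper needs the $d$-closedness of $\pi^*\omega_Y-\omega_0$ coming from (\ref{condition of cohomology class}) to identify the relevant torsion with $\pi^*T_Y$ and invoke \cite[Proposition 3.1]{TW15}; ``bounded bisectional curvature of $\pi^*\omega_Y$'' is not the mechanism.)

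Second, and more decisively, your endgame near $E$ does not close for $n\geq 3$. Because your only coercive term is $-\tr_{\omega^\e}(\pi^*\omega_Y)$, you must trace against the degenerate volume $(\pi^*\omega_Y)^n\sim|s|^{2(n-1)}_h\Omega_X$ and arrive at $\tr_{\pi^*\omega_Y}\omega^\e\lesssim (C-A\dot{\psi}^\e)^{n-1}e^{\dot{\psi}^\e}(\e+|s|^2_h)^{n-1}/|s|^{2(n-1)}_h$ at the maximum point. In the shell $\{|s|^2_h<\e\}$ your AM--GM comparison only yields the lower bound $\dot{\psi}^\e\gtrsim -(n-1)\log\frac{\e+|s|^2_h}{|s|^2_h}-C$, which, substituted back (together with $\dot\psi^\e\le C/A$), gives $|s|^{2(1+\delta)}_h\tr_{\pi^*\omega_Y}\omega^\e\lesssim \big(\log\frac{\e+|s|^2_h}{|s|^2_h}\big)^{n-1}\e^{n-1}|s|^{-2(n-2-\delta)}_h$; this is unbounded as $x_0$ approaches $E$ once $n\geq 3$, so the claimed uniform bound on $Q_\delta$ at its maximum is not established (your argument only works for $n=2$ or when $|s|^2_h(x_0)\geq\e$). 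The paper avoids the shell altogether: by (\ref{curvature of h}) the combination $\theta^\e-\frac{1+\delta}{A}R_h$ dominates $c_0\omega_0$, a nondegenerate metric on all of $X$ (the $-R_h$ piece supplies positivity exactly in the directions where $\pi^*\omega_Y$ degenerates), so the maximum principle gives $0\leq -\tr_{\omega^\e}\omega_0+C\log\frac{\Omega_X}{(\omega^\e)^n}+C$, whence a bound on $\tr_{\omega_0}\omega^\e$ via Lemma \ref{bound volume} (iii); the weighted estimate then follows from $\omega_0\leq C|s|^{-2}_h\pi^*\omega_Y$, i.e. $|s|^2_h\tr_{\hat{\omega}}\omega^\e\leq C\tr_{\omega_0}\omega^\e$. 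You list these ingredients ($\pi^*\omega_Y\geq c|s|^2_h\omega_0$, the volume ratio bound) but do not use them at the decisive step; keeping $R_h$ inside the coercive term and bounding the trace with respect to $\omega_0$, rather than estimating $\tr_{\omega^\e}R_h\leq C\tr_{\omega^\e}\pi^*\omega_Y$ and working with $\pi^*\omega_Y$ alone, is what makes the argument dimension-free.
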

\begin{proof}
We first denote by $C>0$ a uniform constant  which is independent of $\e$.
Set $\hat{\omega}=\pi^*\omega_Y$ and fix $\delta>0$ a small constant. Following the same method in \cite[Lemma 3.4]{TW13} (see \cite{PS10} for the original idea),  we consider 
$$H_\delta=\log \tr_{\hat{\omega}}\omega^\e_t+\log|s|_h^{2(1+\delta)}-A\psi^\e+\frac{1}{\tilde{\psi}^\e+C_0},$$
where $\tilde{\psi}^\e:=\psi^\e-\frac{1+\delta}{A}\log|s|^2_h$ and $C_0>0$ satisfies $\tilde{\psi}^\e+C_0\geq 1$.  

\medskip
It follows from \cite[(3.17)]{TW13} and \cite[Lemma 2.4]{SW13} that
\begin{equation}\label{omega0 and hat omega}
\omega_0\leq \frac{C}{|s|^2_h}\pi^*\omega_Y,
\end{equation}
 hence $|s|^2_h\tr_{\hat{\omega}}\omega^\e\leq C\tr_{\omega_0}\omega^\e$. Therefore $H_\delta$ goes to negative infinity as $x$ tends to $E$. Suppose that $H_\delta$ attains its maximum at $(t_0, x_0)\in [T,T']\times X\setminus E$. Without loss of generality,  we assume that $\tr_{\hat{\omega}}\omega^\e\geq 1$ at $(t_0,x_0)$.  

\medskip
The condition (\ref{condition of cohomology class}) implies that $\pi^*\omega_Y-\omega_0$ is a $d$-closed form, so that $d\omega_0=\pi^*(d\omega_Y)$. Therefore we have

 \begin{equation}\label{torsion tensor relation}
 (T_0)^{p}_{jl}(g_0)_{p\bar{k}}=(2\sqrt{-1} \partial\omega_0)_{jl\bar{k}}=(2\sqrt{-1}\pi^*\partial  \omega_Y)_{jl\bar{k}}=(\pi^*T_Y)^{p}_{jl}(\pi^*\omega_Y)_{p\bar{k}}.
 \end{equation}
The condition (\ref{condition of cohomology class}) moreover implies that 
\begin{equation}\label{omega epsilon}
\omega^\e(t)=\omega_0+\beta(t),
\end{equation}
where $\beta(t)=(1-\e/T) (-TRic(\omega_0)+dd^c\psi)+(t-T)\pi^*\chi$ is a closed $(1,1)$-form.

\medskip
Combining (\ref{omega epsilon}), (\ref{torsion tensor relation}) and the calculation of \cite[Proposition 3.1]{TW15}, at $(t_0,x_0)$ we get
\begin{eqnarray}\label{evolution of logtr}
\left( \frac{\partial}{\partial t} -\Delta_\e \right)\log\tr_{\hat{\omega}}\omega^\e\leq \frac{2}{(\tr_{\hat\omega}\omega^\e)^2}Re\left(g^{k\bar{q}} \hat{T}^i_{ki}\partial_{\bar{q}}\tr_{\hat{\omega}}\omega^\e  \right) +C\tr_{\omega^\e}\hat{\omega} +\frac{\tr_{\hat{\omega}}\eta}{\tr_{\hat{\omega}}\omega^\e},
\end{eqnarray}
where $\hat{T}:=\pi^*T_Y$. 

\medskip
It follows from Lemma \ref{bound volume}  and (\ref{omega0 and hat omega}) that $$\frac{\tr_{\hat{\omega}}\eta}{\tr_{\hat{\omega}}\omega^\e}\leq \frac{C}{|s|_h^2\tr_{\hat{\omega}}\omega^\e}.$$
Moereover, we may assume without loss of generality that 

$$\frac{C}{|s|_h^2\tr_{\hat{\omega}}\omega^\e}\leq C',$$
for some uniform constant $C'$,
since otherwise $H_\delta$ is already uniformly bounded. Therefore, we get
\begin{equation}
\frac{\tr_{\omega^\e}\eta}{\tr_{\hat{\omega}}\omega^\e}\leq C'.
\end{equation}

\medskip Since  at $(t_0,z_0)$ we have $\nabla H_\delta(t_0,x_0)=0$,
\begin{equation}
\frac{1}{\tr_{\hat{\omega}}\omega^\e}\partial_i\tr_{\hat{\omega}}\omega^\e-A\partial_i\tilde{\psi}^\e-\frac{1}{(\tilde{\psi}^\e+C_0)^2}\partial_i\tilde{\psi}^\e=0,
\end{equation}
hence
\begin{eqnarray*}
\left|\frac{2}{(\tr_{\hat\omega}\omega^\e)^2}Re\left(g^{k\bar{q}} \hat{T}^i_{ki}\partial_{\bar{q}}\tr_{\hat{\omega}}\omega^\e  \right)\right| &\leq& \left|\frac{2}{\tr_{\hat{\omega}}\omega^\e}Re\left(\left(A+\frac{1}{\tilde{\psi}^\e+C_0}\right)g^{k\bar{q}} \hat{T}^i_{ki}(\partial_{\bar{q}} \tilde{\psi}^\e)\right)\right|\\
&\leq& \frac{|\partial \tilde{\psi}^\e|^2_{\omega^\e}}{(\psi^\e+C_0)^3} +CA^2(\tilde{\psi}^\e+C_0)^3\frac{\tr_{\omega^\e}\hat{\omega}}{(\tr_{\hat{\omega}}\omega^\e)^2}.
\end{eqnarray*}
We also have
\begin{eqnarray*}
\left( \frac{\partial}{\partial t} -\Delta_\e \right)\left( -A\tilde{\psi}^\e +\frac{1}{\tilde{\psi}^\e+C_0}\right) =-A\dot{\tilde{\psi}}^\e+A\Delta_\e\tilde{\psi}^\e-\frac{\dot{\tilde{\psi}}^\e}{(\psi^\e+C_0)^2}+\frac{\Delta_\e\tilde{\psi}^\e}{(\tilde{\psi}^\e+C_0)^2}.
\end{eqnarray*}
Combining all inequalities above and Lemma \ref{bound volume} (iii), at $(t_0,x_0)$, we obtain
\begin{eqnarray*}
0\leq \left( \frac{\partial}{\partial t} -\Delta_\e \right) H_\delta&\leq&-\frac{|\partial \tilde{\psi}^\e|^2_{\omega^\e}}{(\psi^\e+C_0)^3} +C\tr_{\omega^\e}\hat{\omega} -\left(A+\frac{1}{(\tilde{\psi}^\e+C_0)^2}  \right)\dot{\psi}^\e\\
& & +C'
+ \left(A+\frac{1}{(\tilde{\psi}^\e+C_0)^2}  \right)\tr_{\omega^\e}(\omega^\e-\theta^\e+\frac{1+\delta}{A}R_h)\\
&\leq& C\tr_{\omega^\e}\hat{\omega}+(A+1)\log\frac{\Omega_X}{\omega^n_\e} +(A+1)n\\
&& -A\tr_{\omega^\e}\left( \theta^\e-\frac{1+\delta}{A}R_h \right)+C.
\end{eqnarray*}

Since  $\pi^*\omega_Y-\e_0R_h>0$ ,  we have
$$\theta^\e-\frac{1+\delta}{A}R_h\geq  c_0\omega_0 $$
for $A$ sufficiently large. Combining with $\tr_{\omega^\e}\hat{\omega}\leq C\tr_{\omega^\e}\omega_0$, we can choose $A$ sufficiently large so that  at $(t_0,x_0)$
$$0\leq -\tr_{\omega^\e}\omega_0+C\log\frac{\Omega_X}{(\omega^\e)^n} +C.$$
Therefore, at $(t_0,x_0)$ 
$$\tr_{\omega_0}\omega^\e \leq \frac{1}{n}(\tr_{\omega^\e}\omega_0)^{n-1}\frac{(\omega^\e)^n}{\omega_0^n} \leq C\frac{\omega^\e}{\Omega_X}\left(\log\frac{\Omega_X}{(\omega^\e)^n}\right)^{n-1}.$$ 

Since $\omega^\e/\Omega_X\leq C$ (Lemma \ref{bound volume} (iii)) and $x\rightarrow x\log|x|^{n-1}$ is bounded from above for $x$ close to zero, we get 
$$\tr_{\omega_0}\omega^\e\leq C,$$
This implies that $H_\delta$ is uniformly bounded from above at its maximum.  Hence we obtain the estimate (\ref{trace comparison 1}).
\end{proof}

\begin{lem}\label{trace 2}
There exists a uniform $\lambda>0$ and $C>0$ such that
$$\omega^\e_t\leq \frac{C}{|s|^{2(1-\lambda)}}\omega_0.$$
\end{lem}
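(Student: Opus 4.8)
The plan is to run the maximum principle argument of Lemma~\ref{trace 1} once more, now comparing $\omega^\e_t$ with the genuine Hermitian metric $\omega_0$ instead of with $\pi^*\omega_Y$, and to read off from the computation the precise power of $|s|^2_h$ that may be absorbed. For small $\lambda,\delta>0$ and large $A>0$ I would study, on $[T,T']\times(X\setminus E)$, a quantity of the shape
\begin{equation*}
H=\log\tr_{\omega_0}\omega^\e_t+(1+\delta)\log|s|^2_h-A\psi^\e+\frac{1}{\tilde\psi^\e+C_0},\qquad \tilde\psi^\e:=\psi^\e-\tfrac{1+\delta}{A}\log|s|^2_h,
\end{equation*}
$C_0$ being chosen so that $\tilde\psi^\e+C_0\geq 1$. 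Lemma~\ref{trace 1} together with $\pi^*\omega_Y\leq C\omega_0$ gives $\tr_{\omega_0}\omega^\e\leq C|s|^{-2}_h$, whence $H\to-\infty$ as $x\to E$, so that the supremum of $H$ over $[T,T']\times X$ is attained at an interior point $(t_0,x_0)$; the boundary case $t_0=T$ is settled by the uniform bound on $\psi(T-\e)$ from Lemma~\ref{bound volume}(ii).

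At $(t_0,x_0)$ I would compute $(\partial_t-\Delta_{\omega^\e})H$. The one essentially new point compared with the K\"ahler situation of \cite{SW13} is that $\omega_0$ is not K\"ahler, so the evolution inequality for $\log\tr_{\omega_0}\omega^\e$ -- the analogue of (\ref{evolution of logtr}) -- carries torsion terms of $\omega_0$; these are dealt with exactly as in Lemma~\ref{trace 1}, by means of the decomposition $\omega^\e=\omega_0+\beta(t)$ with $\beta(t)$ closed (see (\ref{omega epsilon})), the torsion identity $d\omega_0=\pi^*(d\omega_Y)$ recorded in (\ref{torsion tensor relation}), and the boundedness of the bisectional curvature of $\omega_0$. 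The torsion gradient term is absorbed, after inserting $\nabla H(t_0,x_0)=0$ and Cauchy--Schwarz, against the negative term $|\partial\tilde\psi^\e|^2_{\omega^\e}/(\tilde\psi^\e+C_0)^3$ produced by the last summand of $H$, the surviving error being a multiple of $\tr_{\omega^\e}\omega_0$ controlled via Lemma~\ref{trace 1} and the boundedness of $|s|^4_h|\log|s|^2_h|^k$; the term $\tr_{\omega_0}\eta/\tr_{\omega_0}\omega^\e$ is handled by Lemma~\ref{bound volume}(i) and the usual dichotomy ``either this term is small or $H$ is already bounded''. Using $\Delta_{\omega^\e}\psi^\e=n-\tr_{\omega^\e}\theta^\e$, $\Delta_{\omega^\e}\log|s|^2_h=-\tr_{\omega^\e}R_h$, the determinant inequality (which replaces $\log\tr_{\omega_0}\omega^\e$ by $\dot\psi^\e+\tr_{\omega^\e}\omega_0+C$), and the lower bound $\dot\psi^\e\geq n\log(t_0-T)-C$ from Proposition~\ref{bound f'}, one is left with an absorption inequality of the form
\begin{equation*}
0\leq C\,\tr_{\omega^\e}\omega_0-A\,\tr_{\omega^\e}\theta^\e+(1+\delta)\,\tr_{\omega^\e}R_h+(\text{lower order}).
\end{equation*}

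The delicate point, and the one that forces $\lambda>0$, is this last absorption. The only $\e$-uniform positivity at one's disposal for $\theta^\e$ is the degenerate one $\theta^\e\geq c_1\pi^*\omega_Y\geq c_2|s|^2_h\,\omega_0$, obtained from $\hat\omega_Y-\tfrac{\e}{T}\omega_Y\geq c\,\omega_Y$ on $[T,T']$ and from (\ref{omega0 and hat omega}); moreover $R_h\leq\tfrac{1}{\e_0}\pi^*\omega_Y$ by (\ref{curvature of h}). Hence $A\,\tr_{\omega^\e}\theta^\e$ dominates $C\,\tr_{\omega^\e}\omega_0$ only on $\{|s|^2_h\geq\delta_0\}$, for a threshold $\delta_0$ depending on the chosen $A$, where the absorption closes and one gets $H(t_0,x_0)\leq C$; on the complementary neighbourhood of $E$ one must instead take the smaller weight $(1-\lambda)\log|s|^2_h$, for which Lemma~\ref{trace 1} by itself already bounds $H$. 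A bookkeeping of this competition -- trading the factor $|s|^2_h$ against a strictly smaller power while keeping $A$ large enough and $\delta_0$ small enough -- yields the estimate with weight $(1-\lambda)\log|s|^2_h$, that is $|s|^{2(1-\lambda)}_h\tr_{\omega_0}\omega^\e\leq C$, which is the claim. (Alternatively, the room $\lambda$ reflects the H\"older regularity of $\phi_T$ furnished by \cite[Theorem 5.2]{DK12}.)

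I expect the main obstacle to lie precisely in this coupling of the singular weight, the constant $A$, and the cut-off scale $\delta_0$: in contrast with \cite{SW13}, one must at the same time absorb the torsion of $\omega_0$ in the trace evolution and live with the fact that the only available positive lower bound for $\theta^\e$ vanishes to order $|s|^2_h$ along $E$, so that a definite $\lambda>0$ survives only after a careful joint choice of the parameters.
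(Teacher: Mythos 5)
There is a genuine gap, and it sits exactly at the point you flag as ``delicate''. Your barrier is the Lemma~\ref{trace 1} barrier with $\tr_{\hat\omega}$ replaced by $\tr_{\omega_0}$, but this substitution changes the nature of the error term: for $\log\tr_{\hat\omega}\omega^\e$ the torsion/curvature error is $C\tr_{\omega^\e}\hat\omega$, which is absorbed by $A\tr_{\omega^\e}\theta^\e\gtrsim A\,\tr_{\omega^\e}\pi^*\omega_Y$ for $A$ large, whereas for $\log\tr_{\omega_0}\omega^\e$ it is $C\tr_{\omega^\e}\omega_0$, and the only $\e$-uniform transverse positivity available near $E$ comes from the $R_h$-contribution of the weight, via $\pi^*\omega_Y-\e_0R_h\geq c'\omega_0$ (see (\ref{curvature of h})); its coefficient equals the $|s|$-weight you put in $H$. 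So to absorb the fixed constant $C$ you would need that weight to be large, while the conclusion of the lemma requires it to be strictly less than $1$ --- a contradiction internal to your scheme that no choice of $A$, $\delta_0$, $\lambda$ resolves (enlarging $A$ only boosts the coefficient of $\tr_{\omega^\e}\pi^*\omega_Y\gtrsim|s|^2_h\tr_{\omega^\e}\omega_0$, useless near $E$). Moreover, with the weight $(1+\delta)\log|s|^2_h$ as you wrote it, even a successful maximum principle would only give $|s|_h^{2(1+\delta)}\tr_{\omega_0}\omega^\e\leq C$, i.e.\ an exponent $\geq 2$, no better than Lemma~\ref{trace 1} combined with $\pi^*\omega_Y\leq C\omega_0$; and your fallback near $E$ is false: Lemma~\ref{trace 1} gives only $\tr_{\omega_0}\omega^\e\leq C|s|_h^{-2}$, so with weight $(1-\lambda)$ it bounds $H$ by $C-\lambda\log|s|^2_h$, which blows up along $E$ --- the exponent $2$ is precisely what the lemma must beat, and you cannot force the maximum into $\{|s|^2_h\geq\delta_0\}$. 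The ``bookkeeping of the competition'' is therefore the missing proof, not a verification, and the aside about the H\"older regularity of $\phi_T$ is not a substitute.

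The paper's proof (following \cite[Lemma 3.5]{TW13}, \cite{SW13}, \cite{PS10}) avoids this by attaching no $|s|$-weight to $\log\tr_{\omega_0}\omega^\e$ at all: one couples it to $A$ copies of the quantity already controlled in Lemma~\ref{trace 1}, taking $H_\delta=\log\tr_{\omega_0}\omega^\e-A\tilde\f^\e+\frac{1}{\tilde\f^\e+\tilde C}+\frac{1}{\tilde\psi^\e+\tilde C}$ with $\tilde\f^\e=-\log\bigl(\tr_{\hat\omega}\omega^\e\,|s|_h^{2(1+\delta)}\bigr)+A\tilde\psi^\e$. The large parameter $A$ then multiplies the weight riding with $\tr_{\hat\omega}\omega^\e$, so the $-R_h$ positivity near $E$ is amplified by $A$ and all $\tr_{\omega^\e}\omega_0$- and $\tr_{\omega^\e}\hat\omega$-errors are absorbed; the maximum principle gives, after $\delta\to0$, the bound $\log\tr_{\omega_0}\omega^\e+A\log\bigl(|s|^2_h\tr_{\hat\omega}\omega^\e\bigr)\leq C$, and only at this final step, using $\tr_{\omega_0}\omega^\e\leq C\tr_{\hat\omega}\omega^\e$, does one obtain $(\tr_{\omega_0}\omega^\e)^{A+1}|s|_h^{2A}\leq C$, i.e.\ the lemma with $\lambda=1/(A+1)$. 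The fractional gain $\lambda$ is produced by distributing the large weight over the $A+1$ trace factors at the end, not by a sharper absorption near $E$; this coupling is the idea your plan is missing.
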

\begin{proof}
Following the method in \cite[Lemma 3.5]{TW13} (see also \cite{PS10}), we consider for each $\delta>0$,
$$H_\delta=\log\tr_{\omega_0}\omega^\e-A \tilde{\f}^\e+\frac{1}{\tilde{\f}^\e+\tilde{C}}+\frac{1}{(\tilde{\psi}^\e+\tilde{C})},$$
where $\tilde{\f}^\e:=-\log(\tr_{\hat{\omega}}\omega^\e |s|^{2(1+\delta)})+A\tilde{\psi}^\e$ and $\tilde{C}$ is chosen so that $\tilde{\f}^\e+\tilde{C}>1$ and $\tilde{\psi}^\e+\tilde{C}>1$. The constant $A>0$ will be chosen hereafter.  Lemma \ref{trace 1} and Lemma \ref{bound volume} (ii) imply that $H_\delta$  goes to negative infinity as $x$ tends to $E$. Hence we can assume that $H_\delta$ attains its maximum at $(t_0,x_0)\in [T,T']\times X\setminus E$. Without loss of generality,  let's assume further that $\tr_{\omega_0}\omega^\e\geq 1$ at $(t_0,x_0)$.

\medskip
As in Lemma \ref{trace 1} we have
\begin{eqnarray*}
\left( \frac{\partial}{\partial t} -\Delta_\e \right)\log\tr_{\hat{\omega}}\omega^\e\leq \frac{2}{(\tr_{\hat\omega}\omega^\e)^2}Re\left(g^{k\bar{q}} \hat{T}^i_{ki}\partial_{\bar{q}}\tr_{\hat{\omega}}\omega^\e  \right) +C\tr_{\omega^\e}\hat{\omega} +\frac{\tr_{\hat{\omega}}\eta}{\tr_{\hat{\omega}}\omega^\e}.
\end{eqnarray*}
and
\begin{eqnarray*}
\left( \frac{\partial}{\partial t} -\Delta_\e \right)\log\tr_{\omega_0}\omega^\e\leq \frac{2}{(\tr_{\omega_0}\omega^\e)^2}Re\left(g^{k\bar{q}} (T_0)^i_{ki}\partial_{\bar{q}}\tr_{\omega_0}\omega^\e  \right) +C\tr_{\omega^\e}\omega_0 +\frac{\tr_{\omega_0}\eta}{\tr_{\omega_0}\omega^\e}, 
\end{eqnarray*}

It follows from Lemma \ref{bound volume} and (\ref{omega0 and hat omega}) that 
$$\frac{\tr_{\hat{\omega}}\eta}{\tr_{\hat{\omega}}\omega^\e}\leq \frac{C}{|s|_h^2\tr_{\hat{\omega}}\omega^\e},$$
and
$$\frac{\tr_{\omega_0}\eta}{\tr_{\omega_0}\omega^\e}\leq \frac{C}{\tr_{\omega_0}\omega^\e}\leq \frac{C}{|s|^2_h\tr_{\hat{\omega}}\omega^\e}.$$

Therefore

\begin{eqnarray*}
\left( \frac{\partial}{\partial t} -\Delta_\e \right)H_\delta  &\leq& C_0 \tr_{\omega_0}\omega^\e +\frac{2}{(\tr_{\omega_0}\omega^\e)^2}Re\left( g^{k\bar{q} }(T_0)^i_{ki} \partial_{\bar{q}}\tr_{\omega^\e}\hat{\omega}\right)+C_0(A+1)\tr_{\omega^\e}\hat{\omega}\\
&&+\left(A+\frac{1}{(\tilde{\f}^\e+\tilde{C})^2} \right) \frac{2}{(\tr_{\hat{\omega}}\omega^\e)^2}Re\left( g^{k\bar{q} }\hat{T}^i_{ki} \partial_{\bar{q}}\tr_{\omega^\e}\hat{\omega}\right)\\
&&-\left( A\left( A+\frac{1}{(\tilde{\f}^\e+\tilde{C})^2}\right)+\frac{1}{(\tilde{\psi}^\e+\tilde{C})^2}   \right)\dot{\psi}^\e\\
&&+\left( A\left( A+\frac{1}{(\tilde{\f}^\e+\tilde{C})^2}\right)+\frac{1}{(\tilde{\psi}^\e+\tilde{C})^2}   \right)\tr_{\omega^\e}\left( \omega^\e-\theta^\e+\frac{(1+\delta)R_h}{A} \right)\\
&&- \frac{2}{(\tilde{\f}^\e+\tilde{C})^3} |\partial \tilde{\f}|^2_g -\frac{2}{(\tilde{\psi}^\e+\tilde{C})^3}|\partial \tilde{\psi}^\e|^2_g+ \frac{CA}{|s|^2_h\tr_{\hat\omega}\omega^\e}.
\end{eqnarray*}
For the last term, we may assume without of generality that 
\begin{eqnarray*}
 \frac{CA}{|s|^2_h\tr_{\hat\omega}\omega^\e}\leq (\tr_{\omega_0}\omega^\e)^{1/A},
\end{eqnarray*}
since otherwise $H_\delta$ is already uniformly  bounded.  Using
$$\tr_{\omega_0}\omega^\e\leq n \left( \frac{(\omega^\e)^n  }{\omega_0^n}\right)(\tr_{\omega^\e }\omega_0)^{n-1},$$
 and $\tr_{\omega_0}\omega^\e\geq 1$ at $(t_0, x_0)$, we get
 \begin{eqnarray}\label{inequality 1}
 \frac{CA}{|s|^2_h\tr_{\hat\omega}\omega^\e}\leq C_1 \tr_{\omega_0}\omega^\e,
\end{eqnarray}
 for $A>n-1$.

It follows from (\ref{curvature of h}) that 
$$\frac{1}{2}A\theta^\e-(1+\delta)R_h\geq c_0\omega_0,$$ for all $A$ sufficiently large. Therefore we can choose $A$ sufficiently large such that 

\begin{eqnarray}\nonumber
A^2\theta^\e-A(1+\delta )R_h&=&\frac{1}{2}A^2\theta^\e+A\left(\frac{1}{2}A\theta^\e-(1+\delta )R_h\right)\\  
&\geq & C_0(A+1)\hat{\omega}+(C_0+C_1+1)\omega_0. \label{inequality 2}
\end{eqnarray}

\medskip
Compute at $(t_0,x_0)$, using (\ref{inequality 1}), (\ref{inequality 2}),  $\tilde{\psi}^\e+\tilde{C} \geq 1$ and $\tilde{\f}^\e +\tilde{C}\geq 1$, 
\begin{eqnarray*}
0&\leq& -\tr_{\omega_0}\omega^\e +\frac{2}{(\tr_{\omega_0}\omega^\e)^2}Re\left( g^{k\bar{q} }(T_0)^i_{ki} \partial_{\bar{q}}\tr_{\omega^\e}\hat{\omega}\right)\\
&&+\left(A+\frac{1}{(\tilde{\f}^\e+\tilde{C})^2} \right) \frac{2}{(\tr_{\hat{\omega}}\omega^\e)^2}Re\left( g^{k\bar{q} }\hat{T}^i_{ki} \partial_{\bar{q}}\tr_{\omega^\e}\hat{\omega}\right)\\
&&-B\dot{\psi}^\e - \frac{2}{(\tilde{\f}^\e+\tilde{C})^3} |\partial \tilde{\f}|^2_g -\frac{2}{(\tilde{\psi}^\e+\tilde{C})^3}|\partial \tilde{\psi}^\e|^2_g +C',
\end{eqnarray*}
for $B$ is a constant in $[A^2, A^2+A+1]$.

\medskip
By the same the argument in \cite[Lemma 3.5]{TW13},  we get, at $(t_0,x_0)$,
\begin{eqnarray}
0\leq -\frac{1}{4}\tr_{\omega^\e}\omega_0-B\log \frac{(\omega^\e)^n}{\Omega_X}+C'.
\end{eqnarray}
As in the proof of Lemma \ref{trace 1}, we infer that $\tr_{\omega_0}\omega^\e$ is bounded from above at $(t_0,x_0)$. Therefore, it follows from Lemma \ref{bound volume} and Lemma \ref{trace 1} that $H_\delta$ is bounded from above uniformly in $\delta$. Let $\delta\rightarrow 0$, we get
$$\log\tr_{\omega_0}\omega^\e+A\log(|s|^2_h\tr_{\hat{\omega}}\omega^\e)\leq C.$$

Since $\tr_{\omega_0}\omega^\e\leq C\tr_{\hat{\omega}}\omega^\e$, we have
$$\log(\tr_{\omega_0}\omega^\e)^{A+1}|s|^{2A}_h\leq C,$$
 and the desired inequality follows with $\lambda=1/(A+1)>0$.
 \end{proof}

\begin{proof}[Proof of Proposition \ref{two key estimates}]

On $Y$, the function $\phi^\e:=\pi_*\psi^\e_t$ satisfies 
\begin{equation}
\frac{\partial \phi^\e}{\partial t}=\log\frac{\left(\hat{\omega}_Y+\frac{\e}{T}(\omega_Y-\pi_*\omega_0)+dd^c\phi^\e\right)^n}{\Omega_Y},  \qquad \psi^\e|_{t=T}=\pi_*\f(T-\e).
\end{equation}
Since $\alpha^\e_t=\hat{\omega}_Y+\frac{\e}{T}(\omega_Y-\pi_*\omega_0)$ is uniformly equivalent to $\hat{\omega}_Y$ for all $\e\in[0,\e_0]$ and $t\in [T,T']$, we can follow the same argument as in Section \ref{a priori estm} to obtain the $C^k$-estimates for $\phi^\e_t$ which are independent of $\e$, for all $t\in (T,T']$. By Azela-Ascoli theorem,  after extracting a subsequence, we can assume that $\phi^\e$ converges  to $\tilde{\phi}$, as $\e\rightarrow 0^+$, in $C^\infty([\delta,T']\times Y)$ for all $\delta\in (T,T')$. Moreover $\tilde{\phi_t}$ uniformly converges to $\phi_T$, hence $\tilde{\phi}$  satisfies (\ref{MAF for CRF}).  Thanks to Theorem \ref{qualitative stab}, $\tilde{\phi}$ is equal to the solution $\phi$ of (\ref{MAF for CRF}). Using Lemma \ref{trace 2} and the standard local parabolic theory, we  obtain the $C^\infty$ estimates $\omega^\e$ on compact sets away from $E$. Hence $\phi$ is the smooth solution of \ref{CRF contraction} on $[T,T']\times Y\setminus \{y_0\}$. Finally,  Proposition \ref{two key estimates} follows directly from Lemma \ref{trace 1} and Lemma \ref{trace 2}.
\end{proof}
Finally, we get the following:
\begin{thm}\label{GH convergence}
The solution $\omega_t$ of (\ref{weak CRF}) smoothly converges to $\omega'$, as $t\rightarrow T^+$, in  $C_{loc}^\infty(Y\setminus\{y_0\})$ and
$(Y,\omega_t)$ converges in the Gromov-Hausdorff sense to $(Y,d_T)$ as $t\rightarrow T^+$.
\end{thm}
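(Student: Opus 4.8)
\textbf{Proof proposal for Theorem \ref{GH convergence}.}

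The plan is to split the statement into the smooth local convergence and the Gromov--Hausdorff convergence, and to handle them separately. For the smooth convergence on $Y\setminus\{y_0\}$, I would argue that the local $C^k$ estimates for $\phi_t$ already obtained (via Section \ref{a priori estm} applied to the flow \eqref{MAF for CRF}, whose coefficients $\hat{\omega}_Y=\omega_Y+(t-T)\chi$ are uniformly Hermitian on $[T,T']$) extend up to $t=T$ away from $y_0$. Concretely, on any fixed compact $K\Subset Y\setminus\{y_0\}$ the function $\phi_T$ is smooth, $\omega'$ is a genuine Hermitian metric there, and the parabolic estimates of Theorem \ref{full estimates} together with the uniform bound on $\dot\phi_t$ from Proposition \ref{bound f'} and Proposition \ref{bound f' above} can be localized: one runs the same Laplacian and gradient estimates using cut-off functions supported in a neighbourhood of $K$, using that the initial data is smooth on a slightly larger compact set. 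This yields uniform $C^k(K)$ bounds for all $t\in[T,T']$, hence by Arzel\`a--Ascoli and the uniqueness from Theorem \ref{qualitative stab} the convergence $\phi_t\to\phi_T$ holds in $C^\infty_{loc}(Y\setminus\{y_0\})$, which is equivalent to $\omega_t\to\omega'$ in $C^\infty_{loc}(Y\setminus\{y_0\})$.

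For the Gromov--Hausdorff statement I would follow the scheme of \cite[Section 6]{SW13} (and \cite{TW13}), transferring everything to $X$ via the approximants $\omega^\e$ and the estimates of Proposition \ref{two key estimates}. Since $(X,g(t))\to(Y,d_T)$ as $t\to T^-$ along the original Chern-Ricci flow on $X$, and $d_T$ is by construction the pushforward of the limiting pseudodistance, it suffices to compare the distance functions $d_{g_t}$ on $Y$ (for $t>T$) with $d_T$. The upper bound $\omega_t\le C\,\omega_Y/\pi_*|s|^2_h$ from Proposition \ref{two key estimates}(1) shows that distances with respect to $g(t)$ are controlled above by a fixed (singular but integrable) metric, giving equicontinuity and a uniform diameter bound; the lower bound, coming from $\omega_t\ge$ (a positive multiple of a fixed metric degenerating only along $E$), prevents distances from collapsing away from $y_0$. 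One then argues as in \cite{SW13}: any sequence $t_j\to T^+$ has a subsequence along which $(Y,d_{g(t_j)})$ converges in Gromov--Hausdorff distance to some compact metric space $(Z,d_Z)$; using the $C^\infty_{loc}$ convergence $\omega_{t_j}\to\omega'$ on $Y\setminus\{y_0\}$ one identifies $Z$ with $Y$ as a set and shows $d_Z=d_T$ off $y_0$, and the uniform estimates force agreement at $y_0$ as well (points near $y_0$ have uniformly small $d_{g(t)}$-distance to $y_0$). Since the limit is independent of the subsequence, the full limit exists and equals $(Y,d_T)$.

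The main obstacle, as in the K\"ahler case treated by Song--Tosatti--Weinkove, is the lower bound on distances needed to rule out collapsing: one must show that for points $p,q$ bounded away from $y_0$, $d_{g(t)}(p,q)$ does not go to zero as $t\to T^+$, and more delicately that the metric balls around $y_0$ shrink in a controlled way so that the limit metric space is exactly $Y$ with no extra identifications. This is handled by combining the two trace estimates of Proposition \ref{two key estimates} with a volume lower bound (from Lemma \ref{bound volume}(iii) and the bound on $\dot\phi_t$) to obtain a non-collapsing estimate for the Riemannian volume of balls, together with the fact that $\pi$ is an isomorphism away from $E$; the argument is essentially that of \cite[Section 6]{SW13}, and the only new point is keeping track of the torsion terms, which have already been controlled in Lemma \ref{trace 1} and Lemma \ref{trace 2}. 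I would therefore state the theorem with a proof that sets up the two ingredients above and then refers to \cite{SW13,TW13} for the metric-space bookkeeping, which goes through verbatim once Proposition \ref{two key estimates} is in hand.
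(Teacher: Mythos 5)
Your overall plan is close in spirit to the paper's, and your final decision to defer the metric-space bookkeeping to \cite{SW13,TW13} once Proposition \ref{two key estimates} is available is exactly what the paper does; but as written there are two soft spots. First, for the smooth convergence up to $t=T$ away from $y_0$ you propose to localize the gradient and Laplacian estimates of Section \ref{a priori estm} with cut-off functions. Those are global maximum-principle arguments, and they do not localize naively: a compactly supported cut-off introduces terms at the maximum point that are not controlled by the structure of the inequality, which is precisely why the paper (following \cite{TW13,SW13}) works instead with the barrier $\log|s|^2_h$ in the quantities $H_\delta$ of Lemmas \ref{trace 1} and \ref{trace 2}, proved for the approximants $\psi^\e$ on $X$ starting from the smooth data $\f(T-\e)$. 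In the paper the statement that $\phi\in C^\infty([T,T']\times Y\setminus\{y_0\})$, hence $\omega_t\to\omega'$ in $C^\infty_{loc}(Y\setminus\{y_0\})$, is simply read off from the proof of Proposition \ref{two key estimates} (uniform-in-$\e$ trace bounds away from $E$ plus standard local parabolic theory, then passing $\e\to 0$ and using the stability Theorem \ref{qualitative stab} to identify the limit). You should invoke that, rather than an unproved cut-off version of the estimates; the claim that the initial regularity of $\phi_T$ near $K$ alone yields uniform $C^k(K)$ bounds up to $t=T$ is the missing step.

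Second, the Gromov--Hausdorff mechanism you describe (extract a subsequence $t_j\to T^+$ along which $(Y,d_{g(t_j)})$ converges to some compact metric space $Z$, then identify $Z$) presupposes Gromov precompactness of the family, which you have no grounds for here: there is no Ricci lower bound, volume doubling, or comparable control in this Hermitian setting, and the volume non-collapsing you invoke is neither established in the paper nor needed. The actual argument, which is the paper's and that of \cite[Section 3]{SW13}, is a direct estimate of the Gromov--Hausdorff distance between $(Y,d_{\omega_t})$ and $(Y,d_T)$ built from exactly two consequences of Proposition \ref{two key estimates}: (a) a uniform bound on $\mathrm{diam}_{d_{\omega_t}}(S_r)$ for the small spheres $S_r$ around $y_0$ (from Lemma \ref{trace 1} and the argument of \cite[Lemma 2.7(i)]{SW13}), and (b) the bound $C|z|^{\lambda}$ on the $\omega_t$-length of radial paths toward $y_0$ (from Lemma \ref{trace 2}, as in \cite[Lemma 2.7(ii)]{SW13}), combined with the uniform convergence of the metrics on compact sets away from $y_0$. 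With (a) and (b) the identity map is an approximate isometry up to errors that are small uniformly in $t$, and no abstract compactness or limit-identification step is required. Your closing sentence gestures at the right references, but the route you actually sketch would not compile into a proof without replacing the compactness step by this direct comparison.
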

\begin{proof}
It follows from the proof of Proposition \ref{two key estimates} that $\phi\in C^\infty([T,T']\times Y\setminus \{y_0\})$, hence $\omega_t$ smoothly converges to $\omega|_{t=T}=\omega'$ in $C^\infty_{loc}(Y\setminus \{y_0\})$.

\medskip
Denote by  $d_{\omega_t}$ the metric induced from $\omega_t$ and $S_r$ the $2n-1$ sphere of radius $r$ in $B$ centered at the origin. Then it follows from Lemma \ref{trace 1} and the argument of \cite[Lemma 2.7(i)]{SW13} that:

\medskip
(a) There exists a uniform constant $C$ such that
\begin{equation}
\textup{diam}_{d_{\omega_t}} (S_r)\leq C, \quad \forall t\in (T,T'].
\end{equation}
Following the same argument of \cite[Lemma 2.7 (ii)]{SW13}, we have

\medskip
(b) For any $z\in B(0,\frac{1}{2})\setminus \{0\}$, the length of a radial path $\gamma(s)=sz$ for $s\in (0,1]$ with respect to $\omega_t$ is uniformly bounded from above by $C|z|^{\lambda}$, where $C$ is a uniformly constant and $\lambda$ as in Lemma \ref{trace 2}.

\medskip
Given (a) and (b), the Gromov-Hausdorff convergence  follows exactly as in \cite[Section 3]{SW13}. This completes the proof of Theorem \ref{GH convergence} and Conjecture \ref{conjecture}.
\end{proof}
\section{Twisted Chern-Ricci flow}\label{definition of CRF}
\subsection{Maximal existence time for the twisted Chern-Ricci flow}\label{existence section}
Let $(X,g)$ be a compact Hermitian manifold of complex dimension $n$. We define here the twisted Chern-Ricci flow on $X$ as
 \begin{equation}\label{TCRF 1}
 \frac{\partial\omega_t }{\partial t}= -Ric(\omega_t)+\eta, \quad \omega|_{t=0}=\omega_0
 \end{equation}
where $ \eta$ is a smooth $(1,1)$-form. 
Set $\hat{\omega}_t=\omega_0+t\eta-tRic(\omega_0)$.
We now define 
\begin{eqnarray*}
T &:=&\sup\{t\geq 0| \exists \psi\in C^\infty (X) \text{ such that } \hat{\omega}_t+dd^c \psi> 0\}\\
  &=&\sup\{T'\geq 0| \forall t\in [0,T'], \exists \psi\in C^\infty (X) \text{ such that } \hat{\omega}_t+dd^c \psi> 0\}.
\end{eqnarray*}

\medskip
We now prove the following theorem generalizing the same result due to Tosatti-Weinkove \cite[Theorem 1.2]{TW15}. We remark that our ingredients for the proof come from a priori estimates proved in Section \ref{a priori estm} which are different from the approach of Tosatti and Weinkove.
\begin{thm}
There exists a unique maximal solution to the twisted Chern-Ricci flow on $[0,T)$. 
\end{thm}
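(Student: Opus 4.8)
The plan is to pass to the equivalent scalar complex Monge-Amp\`ere flow, invoke short-time existence for parabolic equations together with the a priori estimates of Section~\ref{a priori estm}, and run a continuity argument identifying the maximal existence time with $T$.

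As recorded above, writing $\omega_t=\hat\omega_t+dd^c\varphi_t$ with $\hat\omega_t=\omega_0+t(\eta-Ric(\omega_0))$ turns \eqref{TCRF 1} into
\begin{equation*}
\frac{\partial\varphi}{\partial t}=\log\frac{(\hat\omega_t+dd^c\varphi)^n}{\omega_0^n},\qquad \varphi|_{t=0}=0 .
\end{equation*}
Since $\hat\omega_0=\omega_0>0$, this is a non-degenerate parabolic complex Monge-Amp\`ere equation for small $t$ (its linearization at $\varphi\equiv 0$, $t=0$, being $\partial_t-\Delta_{\omega_0}$), so standard short-time existence gives a unique smooth solution $\varphi$ on a maximal interval $[0,T_{\max})$, hence a unique smooth solution $\omega_t$ of \eqref{TCRF 1} there. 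For every $t<T_{\max}$ one has $\hat\omega_t+dd^c\varphi_t=\omega_t>0$ with $\varphi_t$ smooth, so $t$ lies in the set defining $T$; letting $t\to T_{\max}^-$ gives $T_{\max}\leq T$.

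For the reverse inequality, suppose $T_{\max}<T$ and fix $T'$ with $T_{\max}<T'<T$. By the second description of $T$ there is $\psi_{T'}\in C^\infty(X)$ with $\hat\omega_{T'}+dd^c\psi_{T'}>0$; set
\begin{equation*}
\theta_t:=\frac{T'-t}{T'}\,\omega_0+\frac{t}{T'}\bigl(\hat\omega_{T'}+dd^c\psi_{T'}\bigr),\qquad t\in[0,T'],
\end{equation*}
a smooth path of Hermitian metrics with $\theta_0=\omega_0$. One computes $\theta_t=\hat\omega_t+(t/T')\,dd^c\psi_{T'}=\omega_0+t\chi$ for the smooth form $\chi:=\eta-Ric(\omega_0)+(1/T')\,dd^c\psi_{T'}$, and $\theta_t-t\dot\theta_t=\omega_0>0$, so \eqref{condition of theta} holds; moreover $\theta_t$ is uniformly equivalent to $\omega_0$ on the compact set $[0,T']\times X$, so the boundedness hypotheses of Section~\ref{a priori estm} are met. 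Then $\psi:=\varphi-(t/T')\psi_{T'}$ solves $(CMAF)$ with this $\theta_t$, with $\Omega=\omega_0^n$, and with $F(t,z,s)=\psi_{T'}(z)/T'$ (so $\partial F/\partial s\equiv 0$ and $\partial F/\partial t\equiv 0$), starting from the smooth, strictly $\theta_0$-psh datum $\psi|_{t=0}=0$. The a priori estimates of Section~\ref{a priori estm} now apply on the existence interval $[0,T_{\max})$: the bounds on $\varphi$ and $\dot\varphi$ (Proposition~\ref{bound f'}, Proposition~\ref{bound f' above}, Theorem~\ref{bound oscillation}), the gradient bound (Proposition~\ref{bound grad}), the Laplacian bound (Lemma~\ref{bound delta}) and the higher-order estimates (Theorem~\ref{full estimates}) yield, for each $\e\in(0,T_{\max})$, uniform $C^k([\e,T_{\max})\times X)$ bounds for every $k$; together with the smoothness of $\varphi$ on $[0,\e]\times X$ these give uniform $C^k([0,T_{\max})\times X)$ bounds. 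Since $(\hat\omega_t+dd^c\varphi_t)^n=e^{\dot\varphi_t}\omega_0^n$ with $\dot\varphi_t$ bounded on $[\e,T_{\max})$ and $\omega_t\leq C\omega_0$, the eigenvalues of $\omega_t$ relative to $\omega_0$ stay in a fixed compact subinterval of $(0,\infty)$, so $\omega_t$ converges in $C^\infty(X)$, as $t\to T_{\max}^-$, to a Hermitian metric $\omega_{T_{\max}}>0$. Restarting the flow from $\omega_{T_{\max}}$ extends the solution past $T_{\max}$, contradicting maximality; hence $T_{\max}=T$.

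Uniqueness of the maximal solution follows from uniqueness for $(CMAF)$ (Theorem~\ref{uniqueness}, which applies here since $\partial F/\partial t$ is bounded and $\partial F/\partial s\geq 0$) applied on each $[0,T']$ with $T'<T$, or directly from the parabolic maximum principle for the scalar flow. The point I expect to be the crux is not any individual estimate---those are furnished by Section~\ref{a priori estm}---but the construction of the auxiliary path $\theta_t$ representing $\hat\omega_t$ (modulo $dd^c$ of a smooth function) by honest Hermitian metrics while simultaneously checking all the structural hypotheses of that section, in particular \eqref{condition of theta}, together with the gluing of the interior-in-time estimates to the short-time smoothness near $t=0$ needed to rule out degeneration as $t\to T_{\max}^-$.
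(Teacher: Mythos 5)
Your proof is correct and follows essentially the same route as the paper: reduce to the scalar Monge-Amp\`ere flow, obtain short-time existence from standard parabolic theory, rule out $T_{\max}<T$ via the a priori estimates of Section \ref{a priori estm} (Theorem \ref{full estimates}), and deduce uniqueness from Theorem \ref{uniqueness}. The only difference is that you spell out the auxiliary family $\theta_t=\hat\omega_t+(t/T')dd^c\psi_{T'}$ and the shifted potential $\psi=\varphi-(t/T')\psi_{T'}$, which make the reference forms genuinely Hermitian and verify (\ref{condition of theta}); the paper leaves this reduction implicit when it invokes the estimates of Section \ref{a priori estm}.
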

\begin{proof}

Fix $T'<T$. We show that there exists a solution of (\ref{TCRF 1}) on $[0,T']$. First we prove that the twisted Chern-Ricci flow is equivalent to a Monge-Amp\`ere flow. Indeed, consider the following Monge-Amp\`ere flow
\begin{equation}\label{MAF of CRF}
\frac{\partial \f}{\partial t}=\log\frac{(\hat{\omega}_t+dd^c \f)^n}{\omega_0^n}.
\end{equation}
If $\f$ solves (\ref{MAF of CRF}) on $[0.T']$  then taking $\omega_t:=\hat{\omega}_t+dd^c\f$, we get
$$\frac{\partial}{\partial t} (\omega_t-\hat{\omega}_t)=dd^c\log \frac{\omega_t^n}{\omega_0^n},$$
hence $$\frac{\partial}{\partial t}\omega_t=-Ric(\omega_t)+\eta .$$

\medskip
Conversely, if $\omega_t$ solves (\ref{TCRF 1}) on $[0,T']$, then we get
$$\frac{\partial}{\partial t}(\omega_t-\hat{\omega}_t)=-Ric(\omega_t)+Ric(\omega_0)=dd^c\log\frac{\omega^n_t}{\omega_0^n}.$$ 
Therefore if $\f$ satisfies
$$\frac{\partial}{\partial t}(\omega_t-\hat{\omega}_t-dd^c\f)=0,$$
so $\omega_t=\hat{\omega}_t+dd^c\f$ and $\f$ satisfies (\ref{MAF of CRF}).

\medskip
By the standard parabolic theory \cite{Lie}, there exists a maximal solution of $(\ref{MAF of CRF})$ on some time interval $[0,T_{\max})$ with $0<T_{\max}\leq\infty$. We may assume without loss of generality that $T_{\max}<T'$. We now show that a solution of $(\ref{MAF of CRF})$ exists beyond $T_{\max}$. Indeed, the a priori estimates for more general Monge-Amp\`ere flows in Section \ref{a priori estm} gives us uniform estimates for $\f$ in $[0,T_{\max})$ (see Theorem \ref{full estimates}), so we get a solution on $[0,T_{\max}]$. By the short time existence theory the flow (\ref{MAF of CRF}) can go beyond $T_{\max}$, this gives a contradiction. So the twisted Chern-Ricci flow has a solution in $[0,T)$. Finally, the uniqueness of solution follows from Theorem \ref{uniqueness}.
\end{proof}

\subsection{Twisted Einstein metric on Hermitian manifolds}
We fix a smooth  $(1,1)$-form $\eta$.  A solution of the equation
\begin{equation}\label{twisted equation}
Ric(\omega)=\mu\omega+\eta
\end{equation}
with $\mu=1$ or $-1$, is called a {\sl twisted Einstein metric}. We recall
$$\{\eta\}:=\{ \alpha | \exists f\in C^\infty(X) \text{ with }   \alpha= \eta+dd^c f \},$$ 
the  equivalence class of $\eta$.

\medskip
In the sequel we study the convergence of the normalized twisted Chern-Ricci flow to a twisted Einstein metric $\omega=\chi+dd^c\f\in -(c_1^{BC}-\{\eta\})$ assuming that $c_1^{BC}-\{\eta\}<0$ and $\mu=-1$. Note that if $c_1^{BC}(X)<0$ (resp. $c_1^{BC}(X)>0$) implies that $X$ is a K\"ahler manifold which admits a K\"ahler metric in $-c_1(X)$ (resp. in $c_1(X)$). Therefore  the positivity  of the twisted Bott-Chern class is somehow more natural in our context.

\medskip
Assume the twisted first Bott-Chern class $\alpha:= c_1^{BC}(X)-\{\eta\}$ is negative. We now use a result in elliptic Monge-Amp\`ere equation due to Cherrier \cite{Cher87} to prove the existence of twisted Einstein metric. An alternative proof using the convergence of the twisted Chern-Ricci flow will be given in Theorem \ref{convergence 1}.
\begin{thm}\label{twisted Einstein metric}
There exists a unique twisted Einstein metric in $-\alpha$ satisfying (\ref{twisted equation}):
\begin{equation}\label{twisted E equation for negative class}
Ric(\omega)=-\omega+\eta.
\end{equation}
\end{thm}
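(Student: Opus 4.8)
The plan is to reduce the twisted Einstein equation \eqref{twisted E equation for negative class} to a scalar complex Monge--Amp\`ere equation with exponential nonlinearity, and then invoke Cherrier's existence theorem together with an elementary maximum principle. First I would fix the background data. Since the twisted Bott--Chern class $\alpha=c_1^{BC}(X)-\{\eta\}$ is negative, there is a smooth volume form $\Omega$ for which $\chi:=\eta-Ric(\Omega)$ (with $Ric(\Omega):=-dd^c\log\Omega$) is a Hermitian metric representing $-\alpha$: starting from any Hermitian representative $\eta-Ric(\Omega_0)+dd^c f$ of $-\alpha$, it suffices to replace $\Omega_0$ by $e^{f}\Omega_0$. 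Writing a candidate metric in $-\alpha$ as $\omega=\chi+dd^c\f$ with $\omega>0$, and using $Ric(\omega)=-dd^c\log(\omega^n/\Omega)+Ric(\Omega)$ together with $Ric(\Omega)=\eta-\chi$, one checks that \eqref{twisted E equation for negative class} is equivalent to $dd^c\bigl(\log(\omega^n/\Omega)-\f\bigr)=0$. On a compact complex manifold the Chern Laplacian $f\mapsto g^{i\bar j}f_{i\bar j}$ has no zeroth order term, so any $f$ with $dd^c f=0$ is constant by the maximum principle; hence $\log(\omega^n/\Omega)-\f$ is a constant, which may be absorbed into $\f$ without changing $\omega$. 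Thus, after this harmless normalization, \eqref{twisted E equation for negative class} becomes
\[(\chi+dd^c\f)^n=e^{\f}\,\Omega,\qquad \chi+dd^c\f>0.\]

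Next I would invoke Cherrier's theorem \cite{Cher87}: on a compact Hermitian manifold, for $\chi$ a Hermitian metric and $\Omega$ a smooth volume form, the above equation has a smooth solution $\f$ with $\chi+dd^c\f>0$. Here the exponential term $e^{\f}$ has the favorable sign, so the $C^0$ estimate is immediate (at a maximum of $\f$, $dd^c\f\le0$ forces $e^{\f}\Omega=(\chi+dd^c\f)^n\le\chi^n$, an upper bound, and similarly one gets a lower bound at a minimum), and Cherrier's gradient and Laplacian estimates---extending the elliptic method of Aubin and Yau to the Hermitian setting---then yield the higher order bounds and smoothness. Transporting this solution back through the equivalence of the first step produces a twisted Einstein metric $\omega=\chi+dd^c\f\in-\alpha$ solving \eqref{twisted E equation for negative class}. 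Alternatively, Theorem~D already furnishes such a metric as the smooth limit of the normalized twisted Chern--Ricci flow, giving a second, flow-based proof of existence.

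For uniqueness, a direct maximum principle suffices: if $\omega_i=\chi+dd^c\f_i$, $i=1,2$, are two normalized solutions, then at a point where $\f_1-\f_2$ is maximal one has $dd^c(\f_1-\f_2)\le0$, so $0<\omega_1\le\omega_2$ there, hence $e^{\f_1}\Omega=\omega_1^n\le\omega_2^n=e^{\f_2}\Omega$, i.e. $\f_1\le\f_2$ at that point and therefore everywhere; exchanging the roles of $\f_1$ and $\f_2$ gives $\f_1=\f_2$. The one genuinely fiddly point of the argument is the cohomological bookkeeping in the first step---unwinding precisely what ``$\alpha<0$'' means when $\eta$ is not closed and verifying that $\eta-Ric(\Omega)$ can be made Hermitian after a conformal change of $\Omega$---whereas the analytic core, Cherrier's a priori estimates, is quoted rather than reproved and is itself nontrivial exactly because $X$ need not be K\"ahler.
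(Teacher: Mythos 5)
Your proposal is correct and follows essentially the same route as the paper: reduce $Ric(\omega)=-\omega+\eta$ to the scalar Monge--Amp\`ere equation $(\chi+dd^c\varphi)^n=e^{\varphi}\Omega$ with $\chi=\eta-Ric(\Omega)$ and invoke Cherrier's theorem. Your additional details --- conformally adjusting $\Omega$ so that $\chi>0$, absorbing the pluriharmonic (hence constant) discrepancy into $\varphi$, and the direct maximum-principle uniqueness argument --- are accurate elaborations of steps the paper leaves implicit.
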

\begin{proof}
Let $\chi=\eta-Ric(\Omega)$ be a Hermitian metric in $\alpha$, then any Hermitian metric in $\alpha$ can be written as $\omega=\chi+dd^c\f$ where $\f$ is smooth strictly and $\chi$-psh.  Since 
$$\omega-\eta=\chi+dd^c\f-\eta=-Ric(\Omega)-dd^c \f,$$
we get
$$dd^c\log \frac{\omega^n}{\Omega}=-Ric(\omega)+Ric(\Omega)=dd^c\f.$$
Therefore the equation (\ref{twisted E equation for negative class}) can be written as the following
Monge-Amp\`ere equation
\begin{equation}\label{MAE of Twisted equation}
(\chi+dd^c\f)^n=e^{\f}\Omega
\end{equation}  
It follows from \cite{Cher87} that (\ref{MAE of Twisted equation}) admits an unique smooth $\chi$-psh  solution, therefore there exists an unique twisted Einstein metric in $-(c_1^{BC}(X)-\{\eta\})$.
\end{proof}

\subsection{Convergence of the flow when $c_1^{BC}(X)-\{\eta\}<0$ }
We defined the {\sl normalized twisted Chern-Ricci flow} as follows
\begin{equation}\label{NCRF}
\frac{\partial }{\partial t}\omega_t=-Ric(\omega_t)-\omega_t+\eta,
\end{equation}
We have (\ref{NCRF}) is equivalent to the following Monge-Amp\`ere flow
$$\frac{\partial \f}{\partial t}=\log\frac{({\hat\omega}_t^n+dd^c\f)^n}{\Omega}-\f,$$
where $\hat{\omega}_t=e^{-t}+(1-e^{-t})\left(\eta-Ric(\Omega)\right)$ and $\Omega$ is a fixed smooth volume form on $X$. Since we assume $c_1^{BC}(X)-\{\eta\}$ is negative, the flow (\ref{NCRF}) has a longtime solution. The longtime behavior of (\ref{NCRF}) is as follows
\begin{thm}\label{convergence 1}
Suppose $c_1(X)-\{\eta\}< 0$.  Then the normalized twisted  Chern-Ricci flow starting from any initial Hermitian metric $\omega_0$ smoothly converges, as $t\rightarrow +\infty$, to a twisted Einstein Hermitian metric $\omega_{\infty}=\eta-Ric(\Omega)+dd^c \f_{\infty}$ which satisfies
  $$Ric(\omega_\infty)=\eta-\omega_\infty.$$ 
\end{thm}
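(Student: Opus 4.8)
The plan is to adapt Cao's proof of the Aubin--Yau theorem via the K\"ahler--Ricci flow, using the a priori estimates of Section~\ref{a priori estm}. By the computation of Section~\ref{definition of CRF}, the normalized twisted Chern--Ricci flow~(\ref{NCRF}) is equivalent to the complex Monge--Amp\`ere flow
\begin{equation*}
\frac{\partial \f}{\partial t}=\log\frac{(\hat{\omega}_t+dd^c\f)^n}{\Omega}-\f,\qquad \f|_{t=0}=0,
\end{equation*}
where $\hat{\omega}_t=e^{-t}\omega_0+(1-e^{-t})\chi$ and $\chi:=\eta-Ric(\Omega)$. Since $c_1^{BC}(X)-\{\eta\}<0$, the form $\chi$ is a Hermitian metric in this class, so $\hat{\omega}_t$, being a convex combination of the Hermitian forms $\omega_0$ and $\chi$, is Hermitian for all $t\ge 0$, uniformly comparable to $\chi$, and satisfies $\dot{\hat{\omega}}_t=\chi-\hat{\omega}_t$ and $|\hat{\omega}_t-\chi|\le Ce^{-t}$. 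The goal is to show $\f_t\to\f_\infty$ in $C^\infty(X)$ for some $\f_\infty$ solving the elliptic equation $(\chi+dd^c\f_\infty)^n=e^{\f_\infty}\Omega$; then $\omega_t=\hat{\omega}_t+dd^c\f_t\to\omega_\infty:=\chi+dd^c\f_\infty$ smoothly, which in particular reproves the existence of the twisted Einstein metric (Theorem~\ref{twisted Einstein metric}).

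First I would establish a \emph{time-uniform} $C^0$ bound. At a spatial maximum of $\f_t$ one has $dd^c\f\le 0$, hence $\hat{\omega}_t+dd^c\f\le\hat{\omega}_t\le C\omega$ and $\partial_t\f\le C-\f$; at a spatial minimum $\hat{\omega}_t+dd^c\f\ge\hat{\omega}_t\ge c^{-1}\omega$, hence $\partial_t\f\ge -C-\f$. Therefore $\frac{d}{dt}\sup_X\f_t\le C-\sup_X\f_t$ and $\frac{d}{dt}\inf_X\f_t\ge -C-\inf_X\f_t$, and since $\f_0=0$ integration yields $\|\f_t\|_{L^\infty(X)}\le C$ uniformly in $t$. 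It is precisely the damping term $-\f$ --- that is, the negativity of $c_1^{BC}(X)-\{\eta\}$ --- that makes this bound uniform.

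Next come the time-uniform higher order estimates. For $t_0$ large the flow restarted at time $t_0$ and run on $[t_0,t_0+2]$ satisfies the structural hypotheses of Section~\ref{a priori estm}: with background forms $\hat{\omega}_t$, volume form $\Omega$ and $F(t,z,s)=s$ one has $\partial F/\partial s=1\ge 0$, $\partial F/\partial t=0$, $\hat{\omega}_t$ uniformly comparable to $\chi$, and $\hat{\omega}_t-(t-t_0)\dot{\hat{\omega}}_t=(1+t-t_0)\hat{\omega}_t-(t-t_0)\chi\ge\frac{1}{2}\chi>0$ there. Applying Theorem~\ref{full estimates} on $[t_0,t_0+2]$ with initial data $\f_{t_0}$, and noting that its constants depend only on the length of the interval and on $\sup_X\f_{t_0}$, which we have bounded uniformly, we obtain $\|\f_t\|_{C^k(X)}\le C_k$ for all $t\ge t_0+1$ and all $k\in\N$, uniformly in $t_0$; in particular $C^{-1}\chi\le\omega_t\le C\chi$ for $t\ge t_1$, some fixed time. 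Then, from $\big(\frac{\partial}{\partial t}-\Delta_{\omega_t}\big)\dot{\f}=e^{-t}\tr_{\omega_t}(\chi-\omega_0)-\dot{\f}$ together with $e^{-t}|\tr_{\omega_t}(\chi-\omega_0)|\le Ce^{-t}$ (valid for $t\ge t_1$) and $\|\dot{\f}_{t_1}\|_{L^\infty(X)}\le C$, the maximum principle run from $t=t_1$ gives $\|\dot{\f}_t\|_{L^\infty(X)}\le C(1+t)e^{-t}\to 0$.

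It remains to conclude. By Arzel\`a--Ascoli and the uniform $C^k$ bounds, every sequence $t_j\to\infty$ has a subsequence along which $\f_{t_j}\to\f_\infty$ in $C^\infty(X)$; since $\dot{\f}_{t_j}\to 0$, the limit satisfies $\log\frac{(\chi+dd^c\f_\infty)^n}{\Omega}-\f_\infty=0$, i.e. $(\chi+dd^c\f_\infty)^n=e^{\f_\infty}\Omega$. The solution of this equation is unique --- at a maximum of the difference of two solutions one gets $e^{\f_\infty^{(1)}}\le e^{\f_\infty^{(2)}}$, hence $\f_\infty^{(1)}\le\f_\infty^{(2)}$ there, and symmetrically --- so all subsequential limits coincide and $\f_t\to\f_\infty$ in $C^\infty(X)$, whence $\omega_t\to\omega_\infty$ smoothly. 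Finally $Ric(\omega_\infty)=-dd^c\log\omega_\infty^n=Ric(\Omega)-dd^c\f_\infty=(\eta-\chi)-dd^c\f_\infty=\eta-\omega_\infty$, as desired. I expect the main obstacle to be the third step: verifying that the maximum-principle arguments of Section~\ref{a priori estm} produce estimates which stay bounded as $t\to\infty$. The torsion terms are absorbed by the Guan--Li coordinates of Lemma~\ref{GL coordinates} exactly as before, but the time-uniformity is new and relies entirely on the $C^0$ bound above, i.e. on the stabilizing effect of the normalization.
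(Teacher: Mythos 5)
Your proposal is correct, and its skeleton (reduction to the Monge--Amp\`ere flow, uniform $C^0$ bound from the damping term $-\f$, uniform higher-order bounds, decay of $\dot\f$, convergence to the solution of $(\chi+dd^c\f_\infty)^n=e^{\f_\infty}\Omega$) matches the paper's; but the central step is handled by a genuinely different mechanism. The paper proves a \emph{global-in-time} Laplacian bound directly (Lemma~\ref{uniform bound of Laplacian}): a parabolic maximum-principle computation for $P=\log\tr_{\hat\omega}\omega_t-A\f+\frac{1}{\f+C_0}$ in the style of Tosatti--Weinkove, with the Phong--Sturm term absorbing the Hermitian torsion, and it obtains the decay $\dot\f\le Cte^{-t}$ \emph{before} any second-order estimate via the quantity $(e^t-1)\dot\f-\f-nt$, then the lower decay bound and exponential $C^0$ convergence afterwards. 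You instead bootstrap the finite-time smoothing estimates of Section~\ref{a priori estm} on sliding windows $[t_0,t_0+2]$: once $\|\f_t\|_{L^\infty}$ is uniformly bounded, Theorem~\ref{full estimates} restarted at $t_0$ gives $C^k$ bounds at $t_0+1$ that are uniform in $t_0$, so no new global maximum-principle computation is needed; the decay of $\dot\f$ and the identification of the limit (subsequential compactness plus uniqueness of the elliptic solution) then follow. This is legitimate, with two points you should state more carefully: the constants of Theorem~\ref{full estimates} depend not only on $\sup_X\f_{t_0}$ but also on $\inf_X\f_{t_0}$ (through the Skoda-type bound behind Theorem~\ref{bound oscillation} and through Proposition~\ref{bound f'}) and on the structural data of the window --- uniform comparability of $\hat\omega_{t_0+s}$ with $\chi$, the condition $\theta_s-s\dot\theta_s\ge0$ (which you check), and curvature/torsion bounds of the reference metric $\hat\omega_{t_0}$, all of which are uniform in $t_0$ because $\hat\omega_t\to\chi$ smoothly; your two-sided $C^0$ bound supplies the rest, so there is no gap. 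The trade-off: your route recycles the heavier machinery of Section~\ref{a priori estm} and avoids redoing the Tosatti--Weinkove-type estimate, while the paper's direct estimate is more self-contained at this step and yields the exponential rate of convergence immediately from $|\dot\f|\le C(1+t)e^{-t}$ (which you also obtain, so you could have concluded without the subsequence argument).
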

\begin{proof}
 We now derive the uniform estimates for the solution $\f$ of the following Monge-Amp\`ere 
$$\frac{\partial \f}{\partial t} =\log\frac{(\hat{\omega}_t+dd^c\f)^n}{\Omega}-\f,\quad \f|_{t=0}=0,$$
where $\hat{\omega}_t:= e^{-t}\omega_0+(1-e^{-t})\chi$, and $\chi=\eta-Ric(\Omega)>0$. 

\medskip
The  $C^0$-estimates for $\f$ and $\dot{\f}$ follow from the same arguments as in \cite{Cao85,TZ06,Tsu88} for K\"ahler-Ricci flow (see \cite{TW15} for  the same estimates for the Chern-Ricci flow). Moreover, since
\begin{eqnarray*}
\bigg(\frac{\partial}{\partial t} -\Delta_{\omega_t}\bigg)(\f+\dot{\f}+nt)=\tr_{\omega_t}\chi.
\end{eqnarray*}
and 
$$\bigg(\frac{\partial}{\partial t} -\Delta_{\omega_t}\bigg)(e^t\dot{\f})=-\tr_{\omega_t}(\omega_0-\chi)$$
therefore $$\bigg(\frac{\partial}{\partial t} -\Delta_{\omega_t}\bigg)((e^t-1)\dot{\f}-\f-nt)=-\tr_{\omega_t}\omega_0<0.$$
The maximum principle follows that $(e^t-1)\dot{\f}-\f-nt\leq C$, hence

\begin{equation}\label{exponent upper bound}
\dot{\f}\leq Cte^{-t} 
\end{equation}

\medskip
For the second order estimate,  we follow the method of Tosatti and Weinkove \cite[Lemma 4.1 (iii)]{TW15} in which they have used a technical trick due to Phong and Sturn \cite{PS10}.
\begin{lem}\label{uniform bound of Laplacian}
There exists uniform constant $C>1$ such that
$$\log\tr_{\hat\omega}(\omega_t)\leq C.$$
\end{lem}
\begin{proof}
Since $\f$ is uniformly bounded, we can choose $C_0$ such that $\f+C_0\geq 1$.
Set $$P=\log\tr_{\hat\omega}\omega_t-A\f+\frac{1}{\f+C_0},$$
where $A>0$ will be chosen hereafter.  The idea of adding the third term in $P$ is due to Phong-Sturn \cite{PS10} and was used in the context of Chern-Ricci flow (cf. \cite{TW15}, \cite{TW13},\cite{TW15b}). 

\medskip
Assume without loss of generality that $\tr_{\hat{\omega}}\omega_t\geq 1$ at a maximum point $(t_0,x_0)$ with $t_0>0$ of $P$.
It follows from the same calculation in Lemma \ref{trace 1} that at $(t_0,x_0)$, we have
\begin{eqnarray*}
\bigg(\frac{\partial}{\partial t}-\Delta_{\omega_t}  \bigg)\log \tr_{\hat{\omega}} \omega_t &\leq& \frac{2}{(\tr_{\hat{\omega}}\omega_t)^2}  Re (\hat{g}^{i\bar{l}}g^{k\bar{q}}\hat{T}_{ki\bar{l}}\partial_{\bar{q}}\tr_{\hat{\omega}}\omega_t)+C\tr_{\omega_t}\hat{\omega} +\frac{\tr_{\hat{\omega}}\eta}{\tr_{\hat{\omega}}\omega_t}\\
&\leq& \frac{2}{(\tr_{\hat{\omega}}\omega_t)^2}  Re (\hat{g}^{i\bar{l}}g^{k\bar{q}}\hat{T}_{ki\bar{l}}\partial_{\bar{q}}\tr_{\hat{\omega}}\omega_t)+C\tr_{\omega_t}\hat{\omega}+ C_1,
\end{eqnarray*}
where $C_1>0$ satisfies $\eta\leq C_1\hat{\omega}$.

\medskip
Now at a maximum point $(t_0,x_0)$ with $t_0>0$ we have $\nabla P=0$, hence
$$\frac{1}{\tr_{\hat{\omega}}\omega_t}\partial_{\bar{i}}\tr_{\hat \omega}\omega_t-A\f_{\bar{i}} -\frac{\f_{\bar{i}}}{(\f+C_0)^2}=0.$$
Therefore
\begin{align*}
\bigg| \frac{2}{(\tr_{\hat{\omega}}\omega_t)^2} &Re (\hat{g}^{i\bar{l}}g^{k\bar{q}}\hat{T}_{ki\bar{l}}\partial_{\bar{q}}\tr_{\hat{\omega}}\omega_t)\bigg|\\
&= \bigg| \frac{2}{(\tr_{\hat{\omega}}\omega_t)^2}  Re \bigg( (A+\frac{1}{(\f+C_0)^2})\hat{g}^{i\bar{l}}g^{k\bar{q}}\hat{T}_{ki\bar{l}}\f_{\bar{q}}\bigg)\bigg| \\
&\leq \frac{CA^2}{(\tr_{\hat\omega}\omega_t)^2}(\f+C_0)^3 g^{k\bar{q}}\hat{g}^{i\bar{l}}\hat{T}_{ki\bar{l}}\hat{g}^{m\bar{j}}\overline{\hat{T}_{qj\bar{m}}}+\frac{|\partial \f|^2_{g}}{(\f+C_0)^3}\\
&\leq \frac{CA^2\tr_{\omega_t} \hat{\omega}}{(\tr_{\hat\omega}\omega_t)^2} (\f+C_0)^3+\frac{|\partial \f|^2_{g}}{(\f+C_0)^3}.
\end{align*}
Moreover, we have
\begin{eqnarray*}
\bigg(\frac{\partial}{\partial t}-\Delta_{\omega_t}  \bigg)(-A\f+\frac{1}{\f+C_0})&= &-A\dot{\f}+A\Delta_{\omega_t}\f-\frac{\dot{\f}}{(\f+C_0)^2}\\
&& +\frac{\Delta_{\omega_t}\f}{(\f+C_0)^2}-\frac{2|\partial\f|^2_{g}}{(\f+C_0)^3}\\
&=& -\bigg(A+\frac{1}{(\f+C_0)^2}\bigg)\dot{\f}- \frac{2|\partial\f|^2_{g}}{(\f+C_0)^3}\\
&&+\bigg(A +\frac{1}{(\f+C_0)^2}\bigg)(n-\tr_{\omega_t}\hat{\omega}).
\end{eqnarray*}

Combining these inequalities,  at $(t_0,z_0)$ we have

\begin{eqnarray*}
0\leq \bigg(\frac{\partial}{\partial t}-\Delta_{\omega_t} \bigg)P&\leq&  \frac{CA^2\tr_{\omega_t} \hat{\omega}}{(\tr_{\hat\omega}\omega_t)^2}+C\tr_{\omega_t}\hat{\omega}-(A+\frac{1}{(\f+C_0)^2})\dot{\f}+C_1\\
&&+\bigg( A+\frac{1}{(\f+C_0)^2}\bigg)(n-\tr_{\omega_t}\hat{\omega}) -\frac{|\partial\f|^2_{g}}{(\f+C_0)^3}\\
&\leq& \frac{CA^2\tr_{\omega_t} \hat{\omega}}{(\tr_{\hat\omega}\omega_t)^2}(\f+C_0)^3-C_2+(C-A)\tr_{\omega_t}\hat{\omega}.
\end{eqnarray*}
We can choose $A$ sufficiently large such that at the maximum of $P$ either 
$\tr_{\hat\omega}\omega_t\leq A^2(\f+C_0)^3,$ then we are done, 
or $\tr_{\hat\omega}\omega_t\geq A^2(\f+C_0)^3, $ and $A\geq 2C$. For the second case, we obtain at the maximum of $P$, there exists a uniform constant $C_3>0$ so that
$$\tr_{\omega_t}\hat{\omega}\leq C_3,$$
Hence combining with the following inequality (see for instance \cite[Lemma 4.1.1]{BG13}) 
$$\tr_{\hat{\omega}}\omega_t\leq n(\tr_{\omega_t}\hat{\omega})^{n-1}\frac{\omega_t^n}{\hat{\omega}^{n}},$$
we have
$$\tr_{\hat{\omega}}\omega_t\leq n(\tr_{\omega_t}\hat{\omega})^{n-1}\frac{\omega_t^n}{\hat{\omega}^{n}}\leq C_4.$$
This implies that $P$ is bounded from above at its maximum, so we complete the proof of the lemma. 
\end{proof}
It follows from Lemma \ref{uniform bound of Laplacian} that $\omega_t$ is uniformly equivalent to $\hat{\omega}$ independent of $t$, hence
$$\bigg(\frac{\partial}{\partial t} -\Delta_{\omega_t}\bigg)(e^t\dot{\f})= -\tr_{\omega_t}(\omega_0)+\tr_{\omega_t}\chi\geq -C,$$
hence $\dot{\f}\geq -C(1+t)e^{-t}$ by the maximum principle. Combining with (\ref{exponent upper bound}), we infer that  $\f $ converges uniformly exponentially fast to a continuous function $\f_{\infty}$. Moreover, by the same argument in Section \ref{a priori estm}, Evans-Krylov and Schauder estimates give us the uniform higher order estimates for $\f$. Therefore $\f_{\infty}$ is smooth and $\f_t$ converges to $\f_\infty$ in $C^\infty$.

\medskip
Finally, we get the limiting metric $\omega_\infty=\eta-Ric(\Omega)+dd^c\f_\infty$ which satisfies the twisted Einstein equation
$$Ric(\omega_\infty)=-\omega_{\infty}+\eta.$$ 
This proves the existence of a twisted Einstein metric in $-c_1^{BC}(X)+\{\eta\}$.
\end{proof}
As an application, we prove the existence of a unique solution of the Monge-Amp\`ere equation on Hermitian manifolds. This result was first proved by Cherrier \cite[Th\'eor\`eme 1, p. 373]{Cher87}. 
\begin{thm}
Let $(X,\omega)$ be a Hermitian manifold, $\Omega$ be a smooth volume form on $X$. Then there exists a unique smooth $\omega$-psh fucntion $\f$ satisfying
$$(\omega+dd^c\f)^n=e^{\f}\Omega.$$
\end{thm}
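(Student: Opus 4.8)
The plan is to deduce this from Theorem~\ref{convergence 1} (equivalently, from Theorem~\ref{twisted Einstein metric}) by a judicious choice of the twisting form. I would set
$$\eta:=\omega+Ric(\Omega),$$
which is a smooth $(1,1)$-form on $X$ (not closed in general, but Theorem~\ref{convergence 1} does not require closedness of $\eta$). With this choice $\chi:=\eta-Ric(\Omega)=\omega$ is a Hermitian metric lying in $-(c_1^{BC}(X)-\{\eta\})$, so the hypothesis $c_1^{BC}(X)-\{\eta\}<0$ holds trivially, with $\omega$ itself a positive representative of the negative class.

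Next I would run the normalized twisted Chern-Ricci flow $\partial_t\omega_t=-Ric(\omega_t)-\omega_t+\eta$ starting from $\omega_0=\omega$. By Theorem~\ref{convergence 1} it converges smoothly, as $t\to+\infty$, to a Hermitian metric $\omega_\infty=\eta-Ric(\Omega)+dd^c\varphi_\infty=\omega+dd^c\varphi_\infty$ satisfying $Ric(\omega_\infty)=\eta-\omega_\infty$. I would then convert this twisted Einstein equation into the scalar Monge-Amp\`ere equation exactly as in the proof of Theorem~\ref{twisted Einstein metric}: since
$$dd^c\log\frac{\omega_\infty^n}{\Omega}=-Ric(\omega_\infty)+Ric(\Omega)=-\eta+\omega_\infty+Ric(\Omega)=dd^c\varphi_\infty,$$
the function $\log(\omega_\infty^n/\Omega)-\varphi_\infty$ is constant on $X$; absorbing this constant into $\varphi_\infty$ (which leaves $\omega_\infty$ unchanged) produces a smooth $\omega$-psh function with $(\omega+dd^c\varphi_\infty)^n=e^{\varphi_\infty}\Omega$, as desired.

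For uniqueness I would argue directly on the elliptic equation by the maximum principle, without appealing to the flow: if $\varphi_1,\varphi_2$ are two solutions, then at a point $x_0$ where $\varphi_1-\varphi_2$ attains its maximum one has $dd^c(\varphi_1-\varphi_2)(x_0)\le0$, hence $(\omega+dd^c\varphi_1)^n\le(\omega+dd^c\varphi_2)^n$ at $x_0$, i.e.\ $e^{\varphi_1(x_0)}\le e^{\varphi_2(x_0)}$, so $\max_X(\varphi_1-\varphi_2)=\varphi_1(x_0)-\varphi_2(x_0)\le0$; exchanging the roles of $\varphi_1$ and $\varphi_2$ gives $\varphi_1\equiv\varphi_2$.

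I do not expect a genuine obstacle here, since the statement is essentially a corollary of Theorem~\ref{convergence 1}: the only points requiring care are the bookkeeping of Bott-Chern classes (verifying that $\eta=\omega+Ric(\Omega)$ forces $c_1^{BC}(X)-\{\eta\}<0$) and the cohomological identity converting $Ric(\omega_\infty)=\eta-\omega_\infty$ into the scalar equation, both of which are already carried out in Section~\ref{definition of CRF}. One could alternatively invoke Theorem~\ref{twisted Einstein metric} with the same $\eta$; the value of stating the result separately is precisely that the twisted Chern-Ricci flow then provides a parabolic, Cao-type proof of Cherrier's theorem.
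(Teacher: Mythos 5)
Your proposal is correct and follows essentially the same route as the paper: take $\eta=\omega+Ric(\Omega)$, apply Theorem \ref{convergence 1} to get the twisted Einstein limit $\omega_\infty=\omega+dd^c\f_\infty$, read off the scalar Monge--Amp\`ere equation, and conclude uniqueness by the comparison (maximum) principle. Your extra care with the additive constant and the explicit maximum-principle argument merely spell out steps the paper leaves implicit.
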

\begin{proof}
Set $\eta=\omega+Ric(\Omega)$, then we have $c_1^{BC}(X)-\{\eta\}<0$. It follows from Theorem \ref{convergence 1} that the twisted normalized Chern-Ricci flow  \begin{equation*}
\frac{\partial }{\partial t}\omega_t=-Ric(\omega_t)-\omega_t+\eta,
\end{equation*}
admits unique solution which smoothly converges to a twisted Einstein Hermitian metric $\omega_\infty=\eta-Ric(\Omega)+dd^c\f_{\infty}=\omega+dd^c\f_{\infty}$ which satisfies $Ric(\omega_\infty)=-\omega_{\infty}+\eta= Ric(\Omega)-dd^c\f_{\infty}$. Therefore $\f_{\infty}$ is a solution  of  the Monge-Amp\`ere equation
$$(\omega+dd^c\f)^n=e^{\f}\Omega$$
The uniqueness of solution follows from the comparison principle.
\end{proof}


\end{document}